\newtheorem{thm}{Theorem}
\newtheorem{defn}[thm]{Definition}
\newtheorem{rem}[thm]{Remark}
\newtheorem{cor}[thm]{Corollary}
\newtheorem{prop}[thm]{Proposition}
\newtheorem{lem}[thm]{Lemma}
\begin{document}

\title{Random walks in $(\mathbb{Z}_{+})^{2}$
with non-zero drift\\ absorbed at the axes}

\author{Irina Kurkova\footnotemark[1] 
        \and
        Kilian Raschel\footnotemark[1] }

\date{\today}

\maketitle

\footnotetext[1]{Laboratoire de Probabilit\'es et
        Mod\`eles Al\'eatoires, Universit\'e Pierre et Marie Curie,
        4 Place Jussieu 75252 Paris Cedex 05, France. 
        E-mails~: \texttt{irina.kourkova@upmc.fr},
                  \texttt{kilian.raschel@upmc.fr}.}

\begin{abstract}
  Spatially homogeneous random walks in $(\mathbb{Z}_{+})^{2}$ with
  non-zero jump probabilities at distance at
  most $1$, with non-zero drift in the interior of the quadrant
  and absorbed when reaching the axes are studied.
  Absorption probabilities generating functions are obtained
  and the asymptotic of absorption probabilities along the axes is
  made explicit. The asymptotic of the Green functions is
  computed along all different infinite paths of states, in
  particular along those approaching the axes.
\end{abstract}

\bigskip

\noindent {\it Keywords : random walk, Green functions, absorption
probabilities,
singularities of complex functions,
holomorphic continuation,
steepest descent method.}
\smallskip
\newline \noindent {\it AMS $2000$ Subject Classification : primary 60G50, 60G40 ;
secondary 30E20, 30F10.}

\section{Introduction}\label{Intro}
Random walks in angles of $\mathbb{Z}^{d}$ conditioned in the sense
of Doob's $h$-transform never to reach the boundary
nowadays arouse enough interest in the mathematical community as they appear in
several distinct domains.

 An important class of such walks is
 the so-called ``non-colliding'' random
 walks. These walks are the processes $(Z_{1}(n), \ldots, Z_{k}(n))_{n\geq 0}$
 composed of $k$ independent and identically distributed
 random walks that never leave the Weyl chamber
 $W=\{z\in \mathbb{R}^{k} : z_{1}<\cdots <z_{k} \}$.
 The distances between these random walks $U(n)=
 (Z_{2}(n)-Z_{1}(n),\ldots, Z_{k}(n)-Z_{k-1}(n))$
 give a $k-1$ dimensional random process whose
 components are positive.
 These processes  appear in the eigenvalue description of important
 matrix-valued stochastic processes~: see~\cite{DY62} for an old
 well-known result on the eigenvalues of the process version
 of the Gaussian Unitary Ensemble and
 e.g.~\cite{Bru91},~\cite{KO01},~\cite{KT4},~\cite{Gr99},~\cite{HW96}.
 They are found in the analysis of corner-growth model,
 see~\cite{Jo00} and~\cite{Jo02}.
 Moreover, interesting connections between non-colliding walks, random
 matrices and queues in tandem are the subject of~\cite{OC03}.
 Paper~\cite{KP} reveals a rather general mechanism
 of the construction  of the suitable $h$-transform for such
 processes.
 But processes whose components are distances between independent
 random walks are not the only class of interest.
 In~\cite{KOR02}, random walks with exchangeable
 increments and conditioned never to exit the Weyl chamber are
 considered. In~\cite{OCY},
 the authors study a certain class of random
 walks, namely $(X_{i}(n))_{1\leq i\leq k}=
   (|\{1\leq m \leq n : \xi_{m}=i\}|)_{1\leq i\leq k}$, where
   $(\xi_{m}, m\geq 1)$ is a sequence of i.i.d.\ random variables with
   common distribution on $\{1,2,\ldots, k\}$.
   The authors identify in law their conditional version with a certain
   path-transformation.
   In~\cite{ORSK1} and~\cite{ORSK2}, O'Connell relates
   these objects to the Robinson-Schensted algorithm.

 Another important area where random processes
 in angles of $\mathbb{Z}^{d}$
 conditioned never to reach the boundary
 appear is ``quantum random walks''.
 In~\cite{Bi2},  Biane
 constructs a quantum Markov chain on the von Neumann
 algebra of ${\rm SU}(n)$ and interprets the restriction
 of this quantum Markov chain to the algebra of a maximal
 torus of ${\rm SU}(n)$ as a random walk on the lattice of integral forms on ${\rm SU}(n)$
 with respect to this maximal torus. He proves that the restriction of the quantum
 Markov chain to the center of the von Neumann algebra
 is a Markov chain on the same lattice  obtained from the
 preceding by conditioning it in Doob's sense
 to exit a Weyl chamber at infinity.
 In~\cite{Bi3}, Biane extends these results
 to the case of general semi-simple connected and simply connected
 compact Lie groups, the basic notion being that of the minuscule weight.
 The corresponding random walk on the weight lattice in the
 interior of the Weyl chamber  can be obtained as follows~:
 if $2l$ is the order of the associated Weyl group,
 one draws the vector corresponding to the minuscule weight
 and its $l-1$ conjugates under
 the Weyl group~; then one translates these  vectors
 to each point of the weight lattice in the interior of
 the Weyl chamber and  assigns to them equal probabilities of jumps
 $1/l$.

 For example, in the case ${\rm U}(3)$, the Weyl chamber
 of the corresponding Lie algebra $\mathfrak{sl}_{3}(\mathbb{C})$
 is the ``angle $\pi/3$'', that is to say the
 domain of $(\mathbb{R}_{+})^{2}$ delimited on the one hand
 by the $x$-axis and on the other by the axis making an
 angle equal to $\pi/3$ with the $x$-axis.
 One gets a spatially homogeneous random walk
 in the interior of the weights lattice,
 as in the left-hand side of Picture~\ref{Flatto_Lie},
 the arrows designing transition probabilities equal to $1/3$.
 In the cases of the Lie algebras $\mathfrak{sp}_{4}(\mathbb{C})$
 or $\mathfrak{so}_{5}(\mathbb{C})$,
 the Weyl chamber is the angle $\pi/4$, see 
 the second picture of Figure~\ref{Flatto_Lie}
 for the transition probabilities.
 Both of these random walks can be
 of course thought as walks in $(\mathbb{Z}_{+})^2$
 with transition probabilities
 drawn in the third and fourth pictures of Figure~\ref{Flatto_Lie}.
 \begin{figure}[!ht]
 \begin{picture}(10.00,80.00)
 \includegraphics{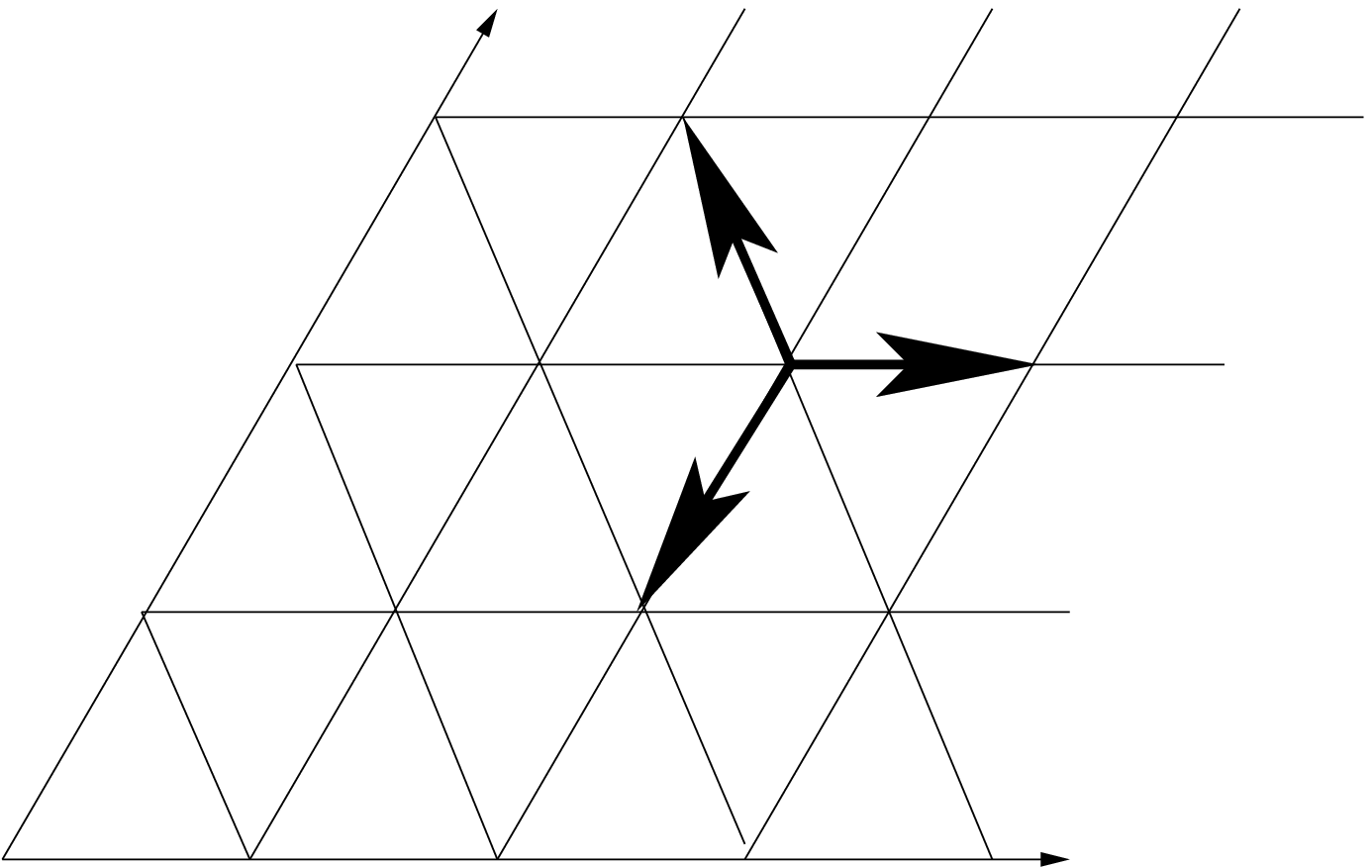}\hspace{36mm}
 \includegraphics{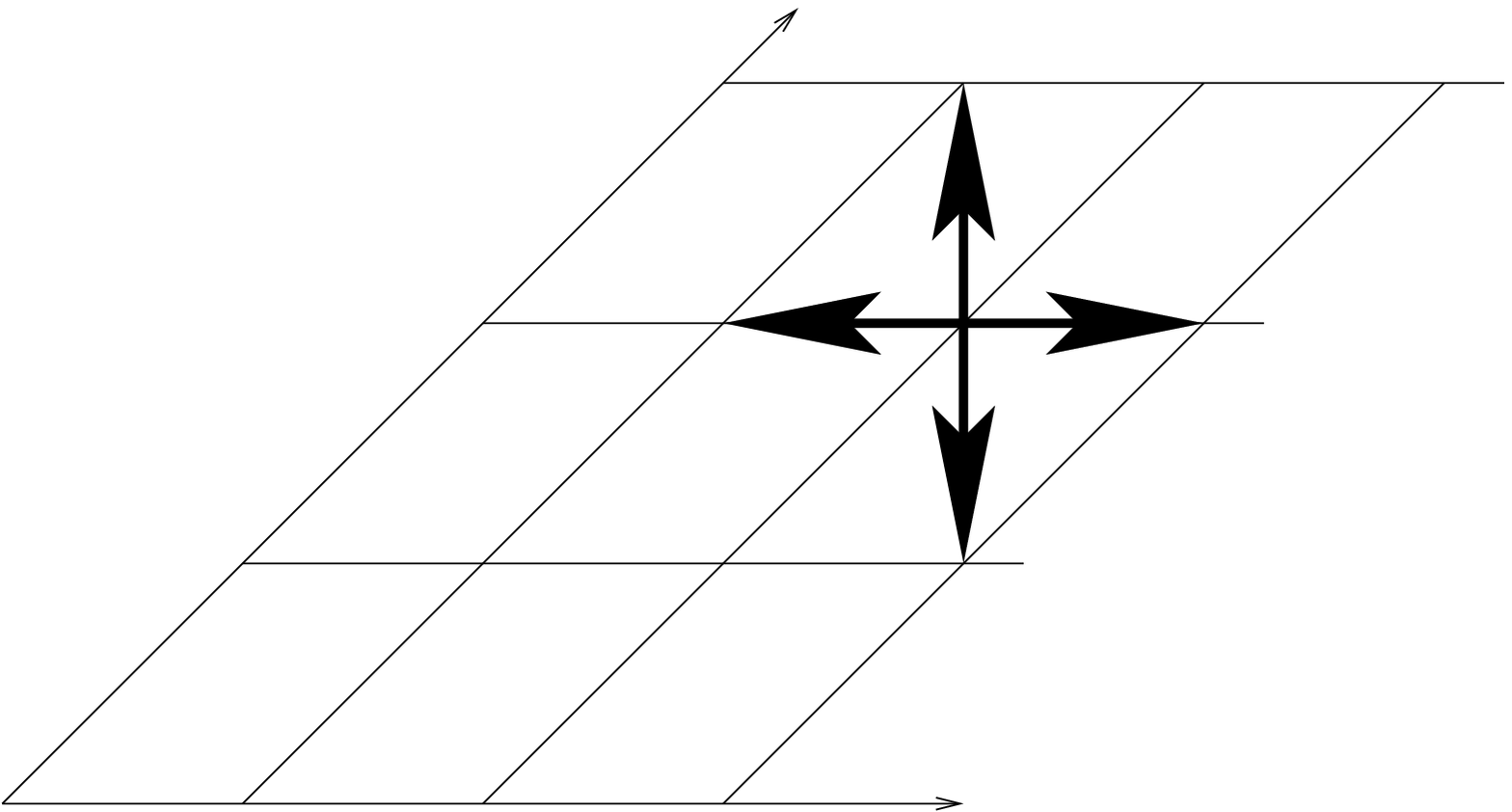}\hspace{46mm}
 \includegraphics{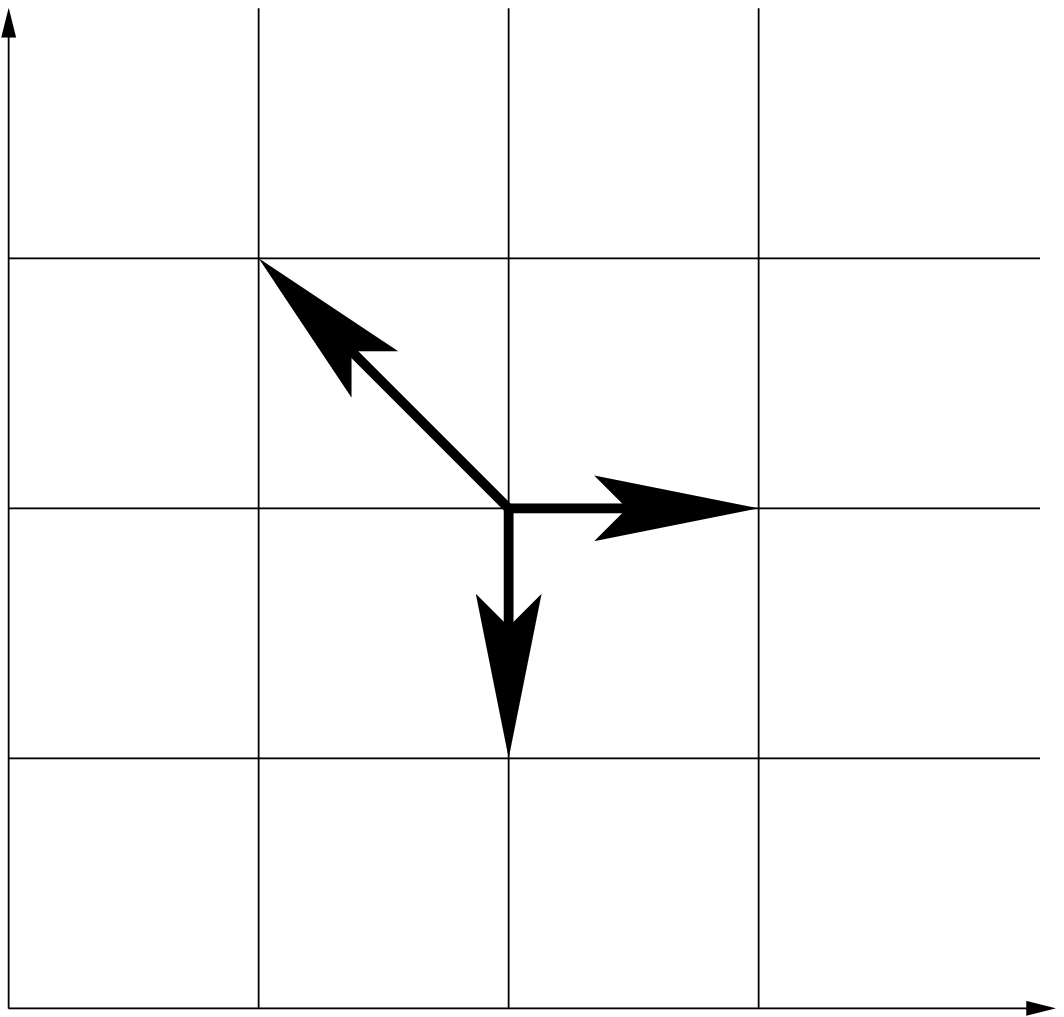}\hspace{36mm}
 \includegraphics{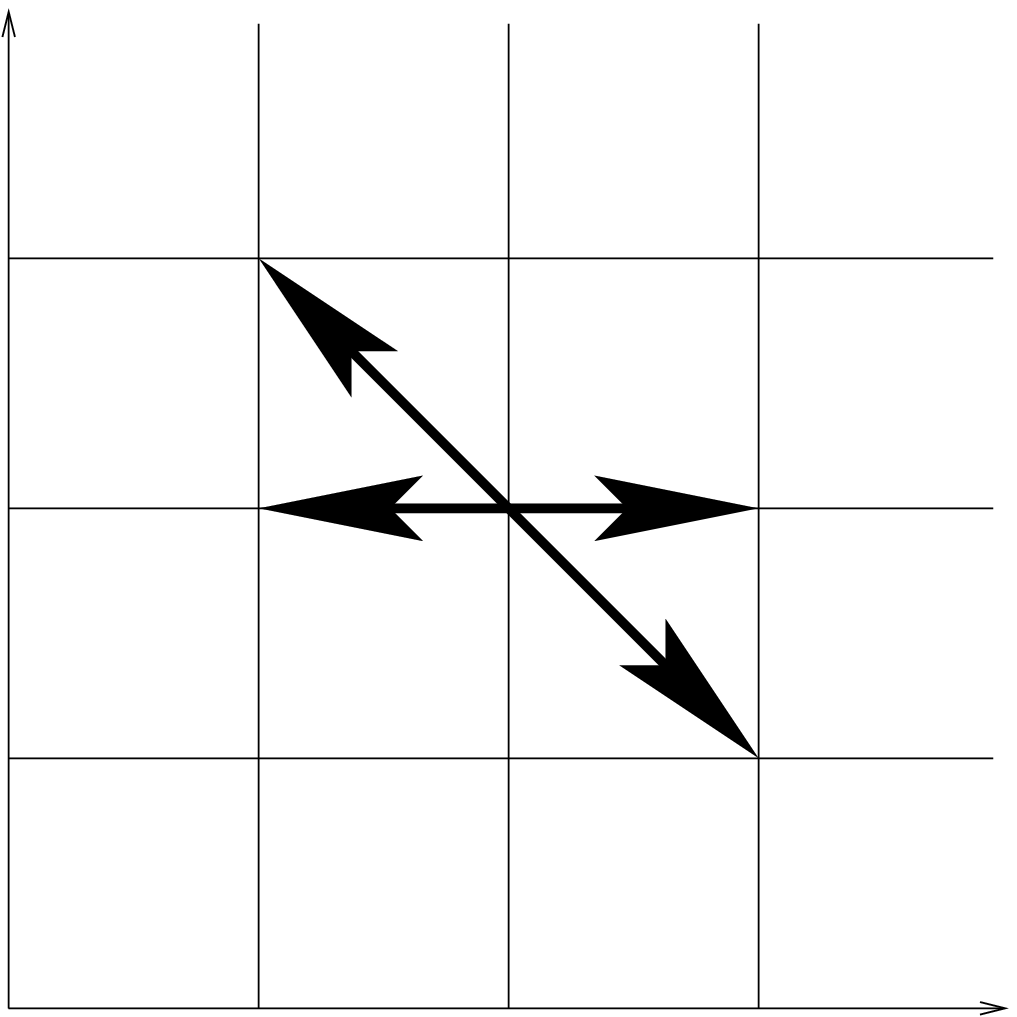}
 \end{picture}
 \caption{The walks on weights lattice of classical
 algebras --above, $\mathfrak{sl}_{3}(\mathbb{C})$ and
 $\mathfrak{sp}_{4}(\mathbb{C})$-- can be viewed
 as random walks on ${\mathbb{Z}_{+}}^{d}$}
 \label{Flatto_Lie}
 \end{figure}
 Biane shows that the  suitable Doob's $h$-transform
 $h(x,y)$ for these random walks is the dimension of the representation
 with highest weight $(x-1,y-1)$.
 In~\cite{Bi1}, again thanks to
 algebraic methods, he
  computes the asymptotic of the Green functions
  $G_{x,y}$ for the random walk with jump probabilities $1/3$ in the angle $\pi/3$
  on the Picture~\ref{Flatto_Lie},
  absorbed at the boundary,
  $x,y \to \infty$ and $y/x \to \tan( \gamma)$,
  $\gamma $ lying in $\in [\epsilon, \pi/2-\epsilon]$,
  $\epsilon>0$. The asymptotic of the Green functions
  as $y/x \to 0$ or $y/x \to \infty$ could not be found by these technics.

 In~\cite{Bi3} Biane also studies some extensions to random walks
with drift~: these are spatially homogeneous random walks in the
same Weyl chambers, with the same non-zero jump probabilities as
previously, but now these jump probabilities are admitted not to be all
equal to $1/l$, so that the mean drift
vector may have positive coordinates. Due to Choquet-Denis
theory, in~\cite{Bi3}, he
 finds all minimal non-negative harmonic functions
 for these random walks. Nevertheless this approach
 seems not allow to find the 
 Martin compactification of these random walks,
 nor to compute the asymptotic of the Green functions along
 different paths.

  In~\cite{I}, Ignatiouk-Robert obtains, under general assumptions and
  for all $d\geq 2$, the Martin boundary of some
  random walks in the half-space $\mathbb{Z}^{d-1} \times \mathbb{Z}_{+}$
  killed on the boundary.
  In this paper and in~\cite{II}, Ignatiouk-Robert proposes
  a new large deviation approach to the analysis of the Martin
  boundary combined with
  the ratio-limit theorem for Markov-additive processes.
  Ignatiouk-Robert and Loree develop this original approach
  in a recent  paper~\cite{IL09} and apply
  it with success to the analysis of
  spatially homogeneous random walks in $(\mathbb{Z}_{+})^{2}$
  killed at the axes, under hypotheses of
  unbounded jump probabilities (more precisely, having exponential decay)
  and non-zero drift.
  They compute the Martin compactification for these random walks
  and therefore obtain the full Martin boundary.
  These methods seem not to be powerful for a more detailed
  study, as for the computation of the asymptotic of the Green functions, or for
  the computation of the absorption probabilities at different points on the axes,
  or for the enumeration of lattice walks (see~\cite{BM1} and references therein
  for the study of this last
  problem for lattice walks on $(\mathbb{Z}_{+})^{2}$ by analytic methods). 
  They also seem to be difficult to generalize to the random
  walks with zero drift.

   In this paper we would like to study in detail
   the spatially homogeneous random walks  $(X(n), Y(n))_{n\geq 0}$
   in $(\mathbb{Z}_{+})^2$ with jumps at distance at most $1$.
   We denote by
   $\mathbb{P}(X(n+1)=i_{0}+i, Y(n+1)=j_{0}+j \mid X(n)=i_{0},
   Y(n)=j_{0})=p_{(i_{0},j_{0}), (i+i_{0}, j+j_{0})}$
   the transition probabilities and do the hypothesis~:
\begin{itemize}
     \item[(H1)] {\it For all $\left(i_{0},j_{0}\right)$ such that $i_{0}>0,
                 j_0>0$, $p_{\left(i_{0},j_{0}\right) ,\left(i_{0},j_{0}\right)+(i,j)}$
                 does not depend on $\left(i_{0},j_{0}\right)$
                 and can thus be denoted by $p_{i j}$.}
     \item[(H2)] {\it  $p_{i j}=0$ if $|i|>1$ or $|j|>1$.}
     \item[(H3)] {\it The boundary
                 $\left\{\left(0,0\right)\right\} \cup \left\{ \left(i,0\right):
                 \hspace{1mm}i\geq 1 \right\}
                 \cup \left\{ \left(0,j\right):\,j\geq 1 \right\}$
                 is absorbing.}
    \item[(H4)] {\it In the list $p_{11},p_{10},p_{1-1},p_{0-1},
                 p_{-1-1},p_{-1,0},p_{-11},p_{0-1}$ there are no
                 three consecutive zeros.}
\end{itemize}
The last hypothesis (H4) is purely technical and avoids  studying
degenerated random walks.

   In a companion paper
   we gave a rather complete analysis of such random walks under a
   simplifying hypothesis that also
\begin{itemize}
\item[(H2')] $p_{-11}=p_{11}=p_{1-1}=p_{-1-1}=0$.
\end{itemize}
This hypothesis made the analysis more transparent for several
reasons. First, the problem to find the generating functions of
absorption probabilities, defined
in~(\ref{absorption_probabilities})
and~(\ref{def_generating_functions}) below, could be reduced to the
resolution of Riemann boundary value problems on contours inside
unit discs, where these functions are holomorphic, being generating
functions of probabilities. These contours under general hypothesis
(H2) may lie outside the unit disc, so that we are obliged first to
continue these functions as holomorphic, then to exploit this
continuation. Secondly, the conformal gluing function responsible for
the conversion between Riemann boundary value problem and
Riemann-Hilbert problem we are faced with, has a particularly nice form.
Thirdly, this hypothesis corresponds to an easy group of Galois
automorphisms in terms of the book~\cite{FIM}, as it is of order
four. Fourthly, under (H2'), in the case of non zero drift,
  the asymptotic of the Green functions
  $G_{i,j}^{n_{0},m_{0}}$ (recall that these are the mean numbers of visits
  to $(i,j)$ starting from $(n_{0},m_{0})$)
  could be found very easily by means of Paper~\cite{KV}.



  Let $M_{x}=\sum_{i,j} i p_{i,j},\  M_{y}=\sum_{i,j}j
  p_{i,j}$ be the coordinates of the mean drift vector.
  If $M_{x}>0$, $M_{y}>0$, the time $\tau$ of absorption by the axes
  for this random walk
  is infinite with positive probability for any initial state $(n_{0},m_{0})$,
  $n_{0},m_{0}>0$. It is immediate that the suitable Doob's $h$-transform conditioning
  the process never to reach the axes is of course this probability
  and also that this $h$-transformed process is equal in
  distributional limit to the conditional process given $\{\tau>k\}$
  as $k \to \infty$.
  What is the suitable $h$-transform under the hypothesis
  $M_{x}=M_{y}=0$ and does this last statement stay true in this case~?
  The answer to this question in~\cite{Ras}
  under (H2') came from the study of the exact tail's asymptotic
  of $\tau$  as $k\to \infty$. Namely it was shown that
  $P_{(n_{0},m_{0})}(\tau>k)\sim C n_{0}m_{0} k^{-1}$, where $C$ does not
  depend on $n_{0},m_{0}$ and
  $n_{0}m_{0}$ is the unique
  non-negative harmonic function.
  Since extensions of these results on $\tau $ when (H2') 
is relaxed are rather voluminous,
  we restrict ourselves in this paper to the case of positive drift~:
\begin{itemize}
\item[(M)] $M_{x}=\sum_{i,j} i p_{i,j}>0$,
           $M_{y}=\sum_{i,j} j p_{i,j}>0$,
\end{itemize}
   and postpone to a future work the study
   of the random walks
   with zero drifts in the Weyl chambers of
   $\mathfrak{sl}_{3}(\mathbb{C})$ and $\mathfrak{sp}_{4}(\mathbb{C})$.

It is the book~\cite{FIM} that gave us the main tool of analysis
and has therefore inspired this paper.
This book studies the random walks in
$(\mathbb{Z}_{+})^{2}$ under assumptions (H1) and (H2) but not
(H3)~: the jump probabilities from the boundaries to the interior of
$(\mathbb{Z}_{+})^2$ are there not zero and the $x$-axis, the
$y$-axis and $(0,0)$ are three other domains of spatial homogeneity.
Moreover, the jumps from the boundaries are supposed such that the
Markov chain is ergodic. The authors G.~Fayolle, R.~Iasnogorodski
and V.~Malyshev elaborate a profound and ingenious analytic approach to compute the
generating functions of stationary probabilities of these random
walks. This approach serves as a starting point for our investigation and
by this reason plays an absolutely crucial role~:
preparatory Subsections~\ref{Introduction},~\ref{The_algebraic_curve_Q}
and~\ref{Galois_automorphisms} proceed along the book~\cite{FIM}
applied for the random walks killed at the boundary.

In Subsection~\ref{Finite_groups_and_s_s_functions}, using this analytic approach,
we analyze the absorption probability.Let  
     \begin{equation}\label{absorption_probabilities}
          \left.\begin{array}{ccc}
               h_{i}^{(n_{0},m_{0})} & = & \mathbb{P}_{\left(n_{0} , m_{0}\right)}
               \left(\text{to be absorbed at}\, (i,0)\right),\\
               \widetilde{h}_{j}^{(n_{0},m_{0})} & = & \mathbb{P}_{\left(n_{0} , m_{0}\right)}
               \left(\text{to be absorbed at}\, (0,j)\right),\\
               h_{00}^{(n_{0},m_{0})} & = & \mathbb{P}_{\left(n_{0} , m_{0}\right)}
               \left(\text{to be absorbed at}\, (0,0)\right).
          \end{array}\right.
     \end{equation}
be the probabilities of being absorbed at points $(i,0)$,
$(0,j)$ and $(0,0)$ starting from $(n_{0},m_{0})$.
Let $h^{n_{0},m_{0}}(x)$ and $\tilde{h}^{n_{0},m_{0}}(y)$
be their generating functions, initially defined for $|x|\leq 1$ and $|y|\leq 1$~:
     \begin{equation}\label{def_generating_functions}
               h^{n_{0},m_{0}}\left(x\right) =
               \sum_{i\geq 1 } h_{i}^{n_{0},m_{0}} x^{i} ,\hspace{10mm}
               \widetilde h^{n_{0},m_{0}}\left(y\right) =
               \sum_{j \geq 1 } \widetilde h^{n_{0},m_{0}}_{j} y^{j} .
     \end{equation}
  When no ambiguity on the initial state can arise, we drop
  the index $(n_{0},m_{0})$ and write
  $h_{i}$, $\tilde{h}_{j}$, $h_{00}$, $h(x)$, $\tilde{h}(y)$
  respectively.

 In Section~\ref{Explicit_form} the
 generating functions $h(x)$, $h(y)$ and $h_{00}$
 are computed.
 Subsection~\ref{Riemann_boundary_value_problem_with_shift}
 gives the first integral representation
 of these functions on a smooth curve, which is almost 
 directly deduced from~\cite{FIM}.
 In Subsection~\ref{Riemann_Hilbert_problem} we look closer at the conformal
 gluing function and transform this representation
 into one on a real segment, that suits better
 for further analysis, see
 Theorem~\ref{explicit_h(x)_second}.

In Section~\ref{Study_solution} we deduce the asymptotic of the
absorption probabilities $h_{i}$ and $\tilde{h}_{j}$ as $i \to
\infty$ and $j\to \infty$. We show that $h_{i} \sim  C(n_{0},m_{0})
p^{-i}i^{-3/2}$, with some (made explicit) $p>1$ and a constant
$C(n_{0},m_{0})$. This constant $C(n_{0},m_{0})$ 
is also made explicit and turns
out to depend quite interestingly on the ``group of Galois
automorphisms'' of the random walk (in the sense of
Definition~\ref{defigr}), see
Theorem~\ref{main_result_asymptotic_absorption_probabilities}.

In Section~\ref{Green_functions_Martin_boundary} we compute the
asymptotic of the Green functions $G_{i,j}^{n_{0},m_{0}}$ that is of
the mean number of visits to $(i,j)$ starting from $(n_{0},m_{0})$
as $i,j \to \infty$, $j/i \to \tan(\gamma)$ where $\gamma \in [0,
\pi/2]$.

 In the case of $\gamma \in ]0, \pi/2[$, thanks to~\cite{KV} 
 and~\cite{Ma1}, it is not a difficult task~:
 the~procedure used in~\cite{KV}
 for the Green functions asymptotic (and in fact developed much earlier in~\cite{Ma1}
 for the stationary probabilities' asymptotic)
 of the random walks in the quadrant under the simplifying
 hypothesis (H2') in the interior
 and with some non-zero jump probabilities from
 the axes  can be rather easily generalized to our random
 walks under (H2). To state the result,
   let $(u(\gamma),v(\gamma))$
   be the unique solution of
   $\text{grad}(\phi(u,v))/
   |\text{grad}(\phi(u,v))|=
   (\cos(\gamma),\sin(\gamma))$
   on $\phi(u,v)=\sum_{i,j}p_{i j}e^{i u}e^{j v}=1$.
   Let $s_{x}(\tan(\gamma))=\exp(u(\gamma))$ and $
   s_{y}(\tan(\gamma)) =\exp(v(\gamma))$.
   Then $G_{i,j}^{n_{0},m_{0}} \sim  C i^{-1/2}
   \big[s_{x}(\tan(\gamma))^{n_{0}}s_{y}(\tan(\gamma))^{m_{0}}-h(s_{x}(\tan(\gamma)))-
   \tilde{h}(s_{y}(\tan(\gamma)))-h_{00}\big]s_{x}(j/i)^{-i}s_{y}(j/i)^{-j}$,
   the constant $C$ does not depend on $n_{0}$, $m_{0}$, $i$, $j$ and is
   made explicit, see Theorem~\ref{thm_asymptotic_Green_functions_interior}.

  It is a more delicate task
  to study the asymptotic of
  the Green functions $G_{i,j}^{n_{0},m_{0}}$  in the case of
  $j/i \to 0$ (or $j/i \to \infty$) that has not been
  completed in previous works.  This is the subject
  of Subsection~\ref{Asymptotic_Green_functions_gamma_zero}.
  In Theorem~ \ref{proposition_asymptotic_Green_functions_angle_zero} we prove that
  $G_{i,j}^{n_{0},m_{0}} \sim
  C_{0} i^{-1/2} j/i \big[m_{0} s_{x}(0)^{n_{0}}
           s_{y}(0)^{m_{0}-1}-\tilde{h}'(s_{y}(0)
           )\big]
          s_{x}(j/i)^{-i}
           s_{y}(j/i)^{-j}
      $ where
      $C_{0}$ is independent of $n_{0}$ and $m_{0}$ and is
      made explicit.
   The explicit expression of
  $m_{0} s_{x}(0)^{n_{0}}s_{y}(0)^{m_{0}-1}-\tilde{h}'(s_{y}(0))$
   in terms of the parameters $p_{i j}$
   depends interestingly
   on the order of  the ``group of Galois automorphisms'', see Remark~\ref{expl}.
       The result for $j/i \to \infty$ follows after exchanging $i$
       and $j$.
   The limit of $G_{i,j}^{n_{0},m_{0}}/G_{i,j}^{n_{1},m_{1}}$,
   as $i,j>0$, $j/i \to 0$ is the same
   as  the limit 
   $h_{i}^{n_{0},m_{0}}/h_{i}^{n_{1},m_{1}}$ when
   $i\to \infty$ from Section~\ref{Study_solution}.
   Furthermore the limits of $G_{i,j}^{n_{0},m_{0}}/
   G_{i,j}^{n_{1},m_{1}}$ when  $j/i \to \tan(\gamma)$, $\gamma \in
   [0, \pi/2]$, provide  explicitly all harmonic functions of the Martin
   compactification. This leads in particular to  the result
   recently obtained in~\cite{IL09} that the Martin boundary
   is homeomorphic to $[0, \pi/2]$.

\bigskip

\noindent{\bf Acknowledgements.}

We would like to thank Professor Bougerol for pointing out the 
interest and relevance of the topic discussed in this paper and for 
the numerous and stimulating discussions we had 
concerning the random walks under consideration.

Thanks to Professor Biane for clarifying the results he obtained 
on ``Quantum random walks'' and for the interesting
discussions we had together around this topic. 

Many thanks also to M.\ Defosseux and F.\ Chapon for the generosity with 
which they shared their mathematical knowledge with us.

\section{Analytic approach}\label{h_x_z}

\subsection{A functional equation}\label{Introduction}
Define
     \begin{equation}
          G\left(x,y\right) = \sum_{i,j\geq 1} G_{i,j}^{n_{0},m_{0}}
          x^{i-1} y^{j-1}=\sum_{i,j\geq 1}\mathbb{E}_{\left(n_{0} , m_{0}\right)}
          \left[ \sum_{n\geq 0}1_{\left\{
          \left(X\left(n\right),Y\left(n\right)\right)=
          \left(i,j \right) \right\}}\right]x^{i-1}y^{j-1},
     \end{equation}
the generating function of the Green functions. With the notations
of Section~\ref{Intro}, we can state the following functional
equation~:
     \begin{equation}\label{functional_equation}
          Q\left(x,y\right)G\left(x,y\right) =
          h\left(x\right)+\widetilde{h}\left( y \right)
          +h_{00} -x^{n_{0}} y^{m_{0}},
     \end{equation}
where $Q$ is the following polynomial, depending only on the walk's
transition probabilities~:
     \begin{equation}\label{def_Q}
          Q\left(x,y\right)= x y\left( \sum_{ i,j }
          p_{i j }x^{i} y^{j}  -1 \right).
     \end{equation}
\textit{Prima facie}, Equation~(\ref{functional_equation}) has a
meaning in $\{x,y\in \mathbb{C} : |x|<1, |y|<1 \}$. The proof
of~(\ref{functional_equation}) comes from writing that for $k,l,n\in
(\mathbb{Z}_{+})^2$, 
               \begin{eqnarray*}
                \mathbb{P}((X(n+1),Y(n+1))=(k,l))=
                \sum_{i,j\geq 1} \mathbb{P}\left(\left(X\left(n\right),Y\left(n\right)\right)
                    =\left(i,j\right) \right)
                    p_{\left(i,j\right),\left(k,l\right)}+\hspace{20mm}\\                   
                    +\sum_{i\geq 1}\mathbb{P}\left(\left(X\left(n\right),Y\left(n\right)\right)
                    =\left(i,0\right) \right)
                    \delta_{\left(k,l\right)}^{\left(i,0\right)}+
                    \sum_{j\geq 1}\mathbb{P}\left(\left(X\left(n\right),Y\left(n\right)\right)
                    =\left(0,j\right) \right)
                    \delta_{\left(k,l\right)}^{\left(0,j\right)} +    \\           
                    +\mathbb{P}\left(\left(X\left(n\right),Y\left(n\right)\right)
                    =\left(0,0\right) \right)
                    \delta_{\left(k,l\right)}^{\left(0,0\right)},\hspace{20mm}\hspace{20mm}\hspace{5mm}
               \end{eqnarray*}
where $\delta^{(i,j)}_{(k,l)}=1$ if $i=k$ and
$j=l$, otherwise $0$. It remains to multiply by $x^{k} y^{l}$ and
then to sum with respect to $k,l,n$.

\subsection{The algebraic curve $Q\left(x,y \right)=0$}
\label{The_algebraic_curve_Q}

The polynomial~(\ref{def_Q}) can be written alternatively~:
     \begin{equation}\label{def_Q_alternative}
          Q\left(x,y\right) = a\left(x\right) y^{2}+ b\left(x\right) y
          + c\left(x\right) = \widetilde{a}\left(y\right) x^{2}+
          \widetilde{b}\left(y\right) x + \widetilde{c}\left(y\right),
     \end{equation}
where
     \begin{equation*}
          \left.\begin{array}{cccccc}
               a\left(x\right) &=& p_{11}x^{2}+p_{01}x+p_{-11},&
               \widetilde{a}\left(y\right)&=&p_{11}y^{2}+p_{10}y+p_{1-1}, \\
               b\left(x\right) &=&  p_{10}x^{2}-x+p_{-10}, &
               \widetilde{b}\left(y\right)&=&p_{01}y^2-y+p_{0-1}, \\
               c\left(x\right) &=& p_{1-1}x^{2}+p_{0-1}x+p_{-1-1}, &
               \widetilde{c}\left(y\right) &=& p_{-11}y^{2}+p_{-10}y+p_{-1-1},\\
               d\left(x\right)&=&b\left(x\right)^{2}-4a\left(x\right)c\left(x\right),&
               \widetilde{d}\left(y\right)&=&\widetilde{b}\left(y\right)^{2}-
               4\widetilde{a}\left(y\right)\widetilde{c}\left(y\right).
          \end{array}\right.
     \end{equation*}

We will now build the algebraic function $Y(x)$ defined by
$Q(x,y)=0$. Note first that $Q(x,y)=0$ is equivalent to
$(b(x)+2a(x)y)^{2}=d(x)$, so that the construction of the function
$Y$ is equivalent to that of the square root of the polynomial $d$.
%
We need the following precisions on the roots of $d$~:

     \begin{lem}\label{lemma_branched_points}
          (1) $d$ is a third or fourth degree polynomial, whose all roots are real
              and mutually distinct.
          (2) We call its roots the $x_{i}$, $i\in\{1,\ldots ,4\}$,
              with eventually $x_{4}=\infty$
              if $\deg(d)=3$. It turns out that
              there are two possibilities~: either the modulus of the roots are
              mutually distinct and in this case we
              enumerate the roots in such a way that
              $|x_{1}|<|x_{2}|<|x_{3}|<|x_{4}|$,
              or there are two pairs of roots and inside of each pair
              the roots are opposed one from the other, in this case
              we enumerate them $0<x_{2}=-x_{1}<x_{3}=-x_{4}$.
              This last case corresponds to the walks having transition probabilities
              such that $p_{11}+p_{-1-1}+p_{1-1}+p_{-11}=1$.
          (3) Moreover, $|x_{1}|<1$, $|x_{2}|<1$ and $|x_{3}|>1$, $|x_{4}|>1$.
          (4) $x_{2}$ and $x_{3}$ are positive.
          (5) $x_{1}=0$ (resp.\ $x_{4}=\infty$) if and only if
              $p_{-10}^{2}-4p_{-11}p_{-1-1}=0$ (resp.\ $p_{10}^{2}-4p_{11}p_{1-1}=0$).
          (6) If $p_{-10}^{2}-4p_{-11}p_{-1-1}\neq 0$
              (resp.\ $p_{10}^{2}-4p_{11}p_{1-1}\neq 0$)
              then $\textnormal{sign}(x_{1})
              =\textnormal{sign}(p_{-10}^{2}-4p_{-11}p_{-1-1})$
              (resp.\ $\textnormal{sign}(x_{4})=\textnormal{sign}
              (p_{10}^{2}-4p_{11}p_{1-1})$).
     \end{lem}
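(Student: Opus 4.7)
The plan is to study the polynomial $d(x) = b(x)^2 - 4a(x)c(x)$, whose real zeros are the branch points of the algebraic function $Y(x)$ defined by $Q(x,y) = 0$. I will treat the six assertions in the logical order $(1) \to (3) \to (2) \to (4,5,6)$. The two main ingredients are, on one hand, the strict convexity in $(\log x, \log y)$ of the kernel $\phi(x,y) = \sum_{i,j} p_{ij}\,x^i y^j$ on $(\mathbb{R}_+)^2$ (a probabilistic input, strict thanks to (H4)), and on the other hand a direct sign analysis of $d$ along $\mathbb{R}$ at the critical values $x \in \{-\infty, -1, 0, 1, +\infty\}$.

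For (1), I would expand $d$ and read off the leading coefficient $p_{10}^2 - 4p_{11}p_{1-1}$ and the coefficient of $x^3$, namely $-2p_{10} - 4(p_{11}p_{0-1} + p_{01}p_{1-1})$. A short check shows that simultaneous vanishing of both coefficients forces three cyclically consecutive $p_{ij}$'s to be zero, contradicting (H4); hence $\deg d \in \{3,4\}$. Reality and simplicity of the two roots $x_2, x_3 \in (0, \infty)$ follow from the fact that $\{\phi = 1\} \cap (\mathbb{R}_+)^2$ is a strictly convex closed log-curve whose $x$-extremal points (where $\partial_y \phi = 0$) are precisely the simple positive zeros of $d$. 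For the remaining one or two roots I would perform a sign-change count of $d$ on $\mathbb{R}_-$, using the values of $d$ at $-\infty$, $0$, $-1$ and the fact that $d(1) > 0$ (established below), to locate the missing real roots and exclude a complex-conjugate pair. Distinctness then follows since a multiple zero of $d$ would create a cusp on the log-level curve, incompatible with strict convexity.

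For (3), evaluating $d$ at $x = 1$ uses $\sum_{i,j} p_{ij} = 1$ to get $a(1) + b(1) + c(1) = 0$, whence $d(1) = (a(1) - c(1))^2$; under (M), $a(1) = \sum_i p_{i,1} > \sum_i p_{i,-1} = c(1)$, so $d(1) > 0$, placing $1$ strictly inside $(x_2, x_3)$ and yielding $|x_2| < 1 < |x_3|$. A parallel sign analysis of $d$ at $x = -1$, exploiting the non-negativity of the $p_{ij}$, gives $|x_1| < 1 < |x_4|$. For (2), an equality $|x_i| = |x_j|$ among distinct real roots means $x_i = -x_j$; a short case analysis based on the $x^3$ and $x^1$ coefficients of $d$ (both of which must vanish if $d$ is a polynomial in $x^2$), combined with (H4), shows that this forces $p_{10} = p_{-10} = p_{01} = p_{0-1} = 0$, equivalent to $p_{11} + p_{-1-1} + p_{1-1} + p_{-11} = 1$; outside this symmetric case the moduli are pairwise distinct.

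Statements (4), (5), (6) are then essentially bookkeeping. Positivity of $x_2, x_3$ is built into their construction as $x$-extrema of the log-level curve in $(\mathbb{R}_+)^2$. The expansion of $d$ yields $d(0) = p_{-10}^2 - 4p_{-11}p_{-1-1}$ and leading coefficient $p_{10}^2 - 4p_{11}p_{1-1}$, giving (5). For (6), the sign of $x_1$ coincides with that of $d(0)$: once (1) gives $d > 0$ on $(x_2, x_3)$, the sign pattern of $d$ on its intervals of constant sign together with $x_2 > 0$ forces $x_1 > 0 \Leftrightarrow d(0) > 0$; the analogous analysis at infinity gives the sign of $x_4$. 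The main obstacle is the real-rootedness step in (1), specifically on $\mathbb{R}_-$ where the probabilistic convexity argument no longer applies; this must be replaced by a finite case analysis driven by (H4) to exclude the possibility of a complex-conjugate pair among the branch points.
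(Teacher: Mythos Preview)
The paper does not prove this lemma at all: its proof is a one-line citation of \cite{FIM}, noting only that hypothesis (H4) is what forces $\deg d\in\{3,4\}$. Your sketch therefore goes well beyond what the paper provides, and the strategy you outline (strict log-convexity of $\phi$ on $(0,\infty)^2$ to produce the two positive simple branch points, followed by a sign count of $d$ at $0,\pm 1,\pm\infty$ to locate the remaining roots) is the standard route and is essentially what one finds in \cite{FIM}.

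Two points deserve tightening. First, in (2) your parenthetical ``both of which must vanish if $d$ is a polynomial in $x^{2}$'' is the trivial direction; what you actually need is that the $x^{3}$ and $x^{1}$ coefficients of $d$, namely $-2p_{10}-4(p_{11}p_{0-1}+p_{01}p_{1-1})$ and $-2p_{-10}-4(p_{01}p_{-1-1}+p_{-11}p_{0-1})$, are both non-positive. Thus the odd part $d_{\mathrm{odd}}(x)=\tfrac{1}{2}(d(x)-d(-x))$ satisfies $d_{\mathrm{odd}}(x)\le 0$ for every $x>0$, and a single coincidence $x_{i}=-x_{j}$ with $x_{i}>0$ gives $d_{\mathrm{odd}}(x_{i})=0$, forcing both odd coefficients to vanish; only then is $d$ even, and your (H4) case analysis applies. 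Second, (4) is not quite ``built in'': you must still check that among the four roots ordered by modulus, the second and third are precisely your positive $x$-extrema $x_{2}^{+},x_{3}^{+}$. This follows once you observe that, outside the purely diagonal case, $d(-x)>d(x)\ge 0$ on $[x_{2}^{+},x_{3}^{+}]$, so $d$ has no root in $[-x_{3}^{+},-x_{2}^{+}]$; combined with your (3), the two remaining roots are then forced into $(-x_{2}^{+},x_{2}^{+})$ and $\{|x|>x_{3}^{+}\}$ respectively. With these two clarifications your plan goes through.
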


\begin{proof}
All these properties are proved in~\cite{FIM}.  Note here that it is
thanks to the hypothesis (H4), made in Section~\ref{Intro}, that the
polynomial $d$ is of degree three or four.
\end{proof}

There are two branches
of the square root of $d$. Each determination leads to a well
defined (i.e.\ single valued) and meromorphic function on the
complex plane $\mathbb{C}$ appropriately cut, that is, in our case,
on $\mathbb{C}\setminus [x_{1},x_{2}] \cup [x_{3},x_{4}]$. If
$x_{4}<0$, then $[x_{3},x_{4}]$ means $[x_{3},+\infty[\cup ]-\infty
,x_{4}]$. We can write the analytic expression of these two branches
$Y_{0}$ and $Y_{1}$ of $Y$~: $Y_{0}(x)=Y_{-}(x)$ and
$Y_{1}(x)=Y_{+}(x)$ where~:
     \begin{equation*}
          Y_{\pm }\left(x\right) = \frac{ -b\left(x\right)\pm
          \sqrt{d\left(x\right)}}{2 a\left(x\right) }.
     \end{equation*}
Just above, and in fact throughout the whole paper, we chose the
principal determination of the logarithm as soon as we use the
complex logarithm~; in this case to define the square root.

We now extend the domain of determination of $Y$ from $\mathbb{C}$
to its Riemann surface  $S$, so that $Y$ becomes single-valued on
$S$. Since there are two determinations of the square root of $d$
(opposed one from the other), the Riemann surface $S$ is formed by
$S_{0}$ and $S_{1}$, two copies of the Riemann sphere
$\mathbb{C}\cup \{\infty\}$ cut along $[x_{1},x_{2}]$ and
$[x_{3},x_{4}]$ and joined across lines lying above these cuts. This
gives a two-sheeted covering surface of $\mathbb{C}\cup \{\infty
\}$, branched over $x_{1},\ldots ,x_{4}$. By opening out the cuts in
the two sheets we see that the Riemann surface associated to $Y$ is
homeomorphic to a sphere with one handle attached, that is a Riemann
surface of genus one, a torus. For more details about the
construction of Riemann surfaces see for instance Book~\cite{SG2}.
In a similar way, the functional
equation~(\ref{functional_equation}) defines also an algebraic
function $X(y)$.  All the results concerning $X(y)$ can be deduced
from those  for $Y(x)$ after a proper change of the parameters,
namely $p_{i j}\mapsto p_{j i}$.

To conclude this part, we give a lemma that clarifies some
properties of the functions $X$ and $Y$, useful in the sequel. It is
proved in~\cite{FIM}.

     \begin{lem}\label{properties_X_Y}
          (1) $Y_{0}(1)=c(1)/a(1)$ and $Y_{1}(1)=1$.
          (2) $Y_{0}(\{x\in \mathbb{C} : |x|=1\})\subset
          \{y\in \mathbb{C} : |y|<1\}$ and
          $Y_{1}(\{x\in \mathbb{C} : |x|=1\}\setminus \{1\})\subset
          \{y\in \mathbb{C} : |y|>1\}$.
          (3) For all $x$ in this cut plane, $|Y_{0}(x)|\leq |Y_{1}(x)|$,
          with equality only on the cuts.
          (4) Suppose that $x_{4}>0$ and that the walk is non degenerated,
          see the hypothesis~(H4). If $p_{1-1}=0$, then
          $\lim_{x\to \infty}xY_{0}(x)\in ]-\infty ,0[$
          and if $p_{1-1}>0$ then
          $\lim_{x\to \infty}Y_{0}(x)\in ]-\infty ,0[$.
     \end{lem}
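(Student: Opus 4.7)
The plan is to handle the four items in order, using elementary algebraic identities together with Rouch\'e's theorem and analytic continuation on the cut plane.

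For (1), start from $\phi(1,1)=\sum p_{ij}=1$, which forces $Q(1,1)=0$ and so makes $y=1$ a root of $a(1)y^{2}+b(1)y+c(1)$; by Vieta the other root is $c(1)/a(1)$. From $a(1)+b(1)+c(1)=0$ one computes $d(1)=(a(1)-c(1))^{2}=M_{y}^{2}$, and hypothesis (M) yields $M_{y}=a(1)-c(1)>0$. Substituting $\sqrt{d(1)}=M_{y}>0$ into $Y_{\pm}(1)=(-b(1)\pm\sqrt{d(1)})/(2a(1))$ gives $Y_{-}(1)=c(1)/a(1)$ and $Y_{+}(1)=1$, matching the identification $Y_{0}=Y_{-}$, $Y_{1}=Y_{+}$.

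For (2), apply Rouch\'e to $a(x)y^{2}+b(x)y+c(x)$ viewed as a polynomial in $y$. The key input is that for $|x|=|y|=1$ the triangle inequality gives $|\phi(x,y)|\leq\sum p_{ij}=1$, with equality only when all monomials $x^{i}y^{j}$ with $p_{ij}>0$ share the same argument; hypothesis (H4) on the support then forces $(x,y)=(1,1)$. Hence $Q(x,\cdot)$ has no root on $\{|y|=1\}$ for $|x|=1$, $x\neq 1$, so the number of $y$-roots inside $\{|y|<1\}$ is locally constant in $x$. A local perturbation at $x=1$ (limiting roots $c(1)/a(1)<1$ and $1$) shows that $Y_{0}$ enters the open unit disc and $Y_{1}$ leaves it, and by constancy this persists on the whole circle.

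For (3), compute
\[
|Y_{1}(x)|^{2}-|Y_{0}(x)|^{2}=-\frac{\operatorname{Re}\bigl(\overline{b(x)}\,\sqrt{d(x)}\bigr)}{|a(x)|^{2}}.
\]
By (2) this is strictly positive on $\{|x|=1\}\setminus\{1\}$. Since $Y_{1}-Y_{0}=\sqrt{d}/a$ vanishes only at the four branch points $x_{1},\ldots,x_{4}$, all of which lie on the cuts, the sign of $|Y_{1}|^{2}-|Y_{0}|^{2}$ cannot change on the connected set $\mathbb{C}\setminus([x_{1},x_{2}]\cup[x_{3},x_{4}])$, so strict inequality propagates throughout. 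On the cuts themselves, $d(x)\leq 0$ makes $Y_{0}$ and $Y_{1}$ complex conjugate and equal in modulus.

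For (4), expand $Y_{0}(x)$ at infinity from its closed form, the sign of $\sqrt{d}$ being fixed so that $Y_{0}$ is the small-modulus branch (by (3)). Since $x_{4}>0$ is equivalent to $p_{10}^{2}-4p_{11}p_{1-1}>0$, the square root $\sqrt{d(x)}$ grows like a degree-two polynomial in $x$. The identities $Y_{0}Y_{1}=c/a$ and $Y_{0}+Y_{1}=-b/a$ together with a case analysis on the dominant monomials at infinity then yield the claim: when $p_{1-1}>0$ the ratio $c/a$ stays bounded and $Y_{0}$ converges to a finite limit which a sign check identifies as strictly negative; when $p_{1-1}=0$ one has $c(x)/a(x)=O(1/x)$, so $Y_{0}\to 0$ and a further expansion gives $xY_{0}(x)\to\ell<0$. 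Non-degeneracy (H4) is used to rule out pathological cancellations. The main obstacle throughout is the consistent identification of the branch $Y_{0}$: once (3) is established, all sign choices in (4) and in the applications of the lemma are pinned down.
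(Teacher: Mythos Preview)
The paper itself offers no proof of this lemma: it simply states that all four items are established in \cite{FIM}. Your proposal therefore goes well beyond what the paper does, attempting a direct self-contained argument.

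Your treatment of (1) is correct. For (2) the strategy is the standard one, but the step ``a local perturbation at $x=1$ shows that $Y_{0}$ enters the open unit disc and $Y_{1}$ leaves it'' is not justified: since $|Y_{1}(e^{i\theta})|^{2}=1+O(\theta^{2})$, deciding which way the root near $1$ moves requires a second-order computation (or, more cleanly, a genuine Rouch\'e comparison such as $|b(x)y|>|a(x)y^{2}+c(x)|$ on $|y|=1$, which directly yields exactly one root inside). For (4) the outline is plausible but sketchy; the real difficulty, which you acknowledge, is pinning down which of $Y_{\pm}$ represents $Y_{0}$ for large real $x$, since after analytic continuation around the cut $[x_{3},x_{4}]$ the principal-square-root formula $(-b-\sqrt{d})/(2a)$ need not continue to coincide with the branch $Y_{0}$.

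The argument for (3) contains a genuine gap. You claim that because $Y_{1}-Y_{0}=\sqrt{d}/a$ vanishes only at the branch points, the sign of $|Y_{1}|^{2}-|Y_{0}|^{2}$ cannot change on the connected cut plane. This is a non sequitur: $|Y_{1}(x)|=|Y_{0}(x)|$ does not imply $Y_{1}(x)=Y_{0}(x)$. By your own formula, $|Y_{1}|^{2}-|Y_{0}|^{2}=0$ precisely where $\operatorname{Re}\bigl(\overline{b(x)}\,\sqrt{d(x)}\bigr)=0$, a real codimension-one condition that is certainly not confined to the four branch points. (For instance, every real zero of $b$ satisfies it; one can check separately that those zeros happen to fall on the cuts since $d=-4ac\leq 0$ there, but nothing in your argument handles complex $x$.) A correct proof of (3) needs a different mechanism---for example, a Rouch\'e count showing that for each $x$ in the cut plane the polynomial $a(x)y^{2}+b(x)y+c(x)$ has exactly one root in the disc $\{|y|<\sqrt{|c(x)/a(x)|}\}$, or a maximum-modulus argument for the ratio $Y_{0}/Y_{1}$ with boundary control on the cuts and at infinity.
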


\subsection{Galois automorphisms and meromorphic continuation}
\label{Galois_automorphisms}

\noindent\textbf{Notation.}
In this part and throughout the whole paper, 
$\imath$ denotes the complex number~: $\imath^{2}=-1$.
\smallskip
\newline The Riemann surface $S$ associated to the algebraic function $Y$ is
naturally endowed with a covering map $\pi:S\to \mathbb{C}\cup
\{\infty \}$, such that for all $x\in \mathbb{C}\setminus
[x_{1},x_{2}] \cup [x_{3},x_{4}]$, $\pi^{-1}(x)$ is composed 
of two elements, say $s_{0}$ and $s_{1}$, such that $s_{i}\in S_{i}$,
$i=0,1$ and $\{Y(s_{0}),Y(s_{1})\}=\{Y_{0}(x),Y_{1}(x)\}$. In the
same way the Riemann surface $\tilde{S}$ associated to $X$ is
endowed with a map $\tilde{\pi} : \tilde{S}\to \mathbb{C}\cup
\{\infty \}$, such that for all $y\in \mathbb{C}\setminus
[y_{1},y_{2}] \cup [y_{3},y_{4}]$, $\tilde{\pi}^{-1}(y)$ is composed
of two elements, say $\tilde{s}_{0}$ and $\tilde{s}_{1}$, such that
$\tilde{s}_{i}\in  \tilde{S}_{i}$, $i=0,1$ and
$\{X(\tilde{s}_{0}),X( \tilde{s}_{1})\}=\{X_{0}(y),X_{1}(y)\}$.

 The surfaces $S$ and $\tilde{S}$ having the same genus,
 we consider from now on \emph{only one} surface
 $T$, conformally
 equivalent to $S$ and $\tilde{S}$, with two coverings $\pi$ and $\tilde{\pi}$.
 One can say that each $ s \in S$ has two
 (not independent) ``coordinates"
 $(x(s),y(s))$ such that $x(s)=\pi(s)$ and $y(s)=\tilde{\pi}(s)$
 and of course $Q(x(s),y(s))=0$ for all $ s \in T$.

We construct on $T$ the following covering automorphisms $\xi$ and $\eta$
defined in the previous notations by $\xi(s_{0})=s_{1}$
and $\eta(\tilde{s}_0)=\tilde{s}_1$.
Thanks to~(\ref{def_Q_alternative}), for any $s=(x,y) \in T$, $\xi$
and $\eta$ take the following explicit expressions~:
     \begin{equation}\label{def_xi_eta_Q}
          \xi\left(x,y\right)=\left(x,\frac{c\left(x\right)}
          {a\left(x\right)}\frac{1}{y}\right),\hspace{5mm}
          \eta\left(x,y\right)=\left(\frac{\widetilde{c}\left(y\right)}
          {\widetilde{a}\left(y\right)}\frac{1}{x},y\right).
     \end{equation}
$\xi$ and $\eta$ are of order two~: $\xi^{2}=\text{id}$,
$\eta^{2}=\text{id}$. In~\cite{Ma2} and~\cite{FIM}, for reasons
explained there, they are also called Galois automorphisms.
%
%
%

\begin{defn}
\label{defigr}
The group of the random walk $\mathcal{H}$ is the
group generated by $\xi$ and $\eta$.
\end{defn}

Being generated by a finite number of elements of order two,
$\mathcal{H}$ is a Coxeter group. In fact, $\mathcal{H}$ is simply a
dihedral group, since it is generated by two elements. Define
$\delta =\eta \xi$. Then the order of $\mathcal{H}$ is equal to
$2\inf\{n\in \mathbb{N}^{*} : \delta^{n}=\text{id}\}$, it can be
eventually infinite. The finiteness of this group, and in that event
its order, will turn out to be decisive in the sequel, notably in
Subsection~\ref{Finite_groups_and_s_s_functions} and Section~\ref{Study_solution}.

As implied in~\cite{FIM}, it is quite difficult to characterize
geometrically the walks having an associated group $\mathcal{H}$ of
order $2n$, except for little orders. That is how in~\cite{FIM} is
proved that $\mathcal{H}$ is of order four if and only if
     \begin{equation}\label{def_Delta}
          \Delta = \left| \begin{array}{ccc}
          p_{11}&p_{10}&p_{1-1}\\
          p_{01}&-1&p_{0-1}\\
          p_{-11}&p_{-10}&p_{-1-1}
          \end{array}\right|
     \end{equation}
is equal to zero. In particular this is the case of the walks having
transition probabilities verifying
$p_{10}+p_{-10}+p_{01}+p_{0-1}=1$, that we have studied
in~\cite{Ras}. It is also proved in~\cite{FIM} that the the walks
with transition probabilities $p_{-11}+p_{10}+p_{0-1}=1$, in the
Weyl chamber of $\mathfrak{sl}_{3}(\mathbb{C})$, see
Figure~\ref{SRW_Flatto_X}, have a group of order six for any values
of the parameters. As for the walks in the Weyl chamber of
$\mathfrak{sp}_{4}(\mathbb{C})$, see Figure~\ref{Flatto_Lie}, they
have, except for exceptional values of the parameters, a group of
order infinite. We add here that the walks with $p_{11}=p_{10}$,
$p_{-1-1}=p_{-10}$, $p_{11}+p_{-1-1}=1/2$, drawn in
Figure~\ref{SRW_Flatto_X}, have a group of order eight.
\begin{figure}[!ht]
\begin{picture}(10.00,75.00)
\includegraphics{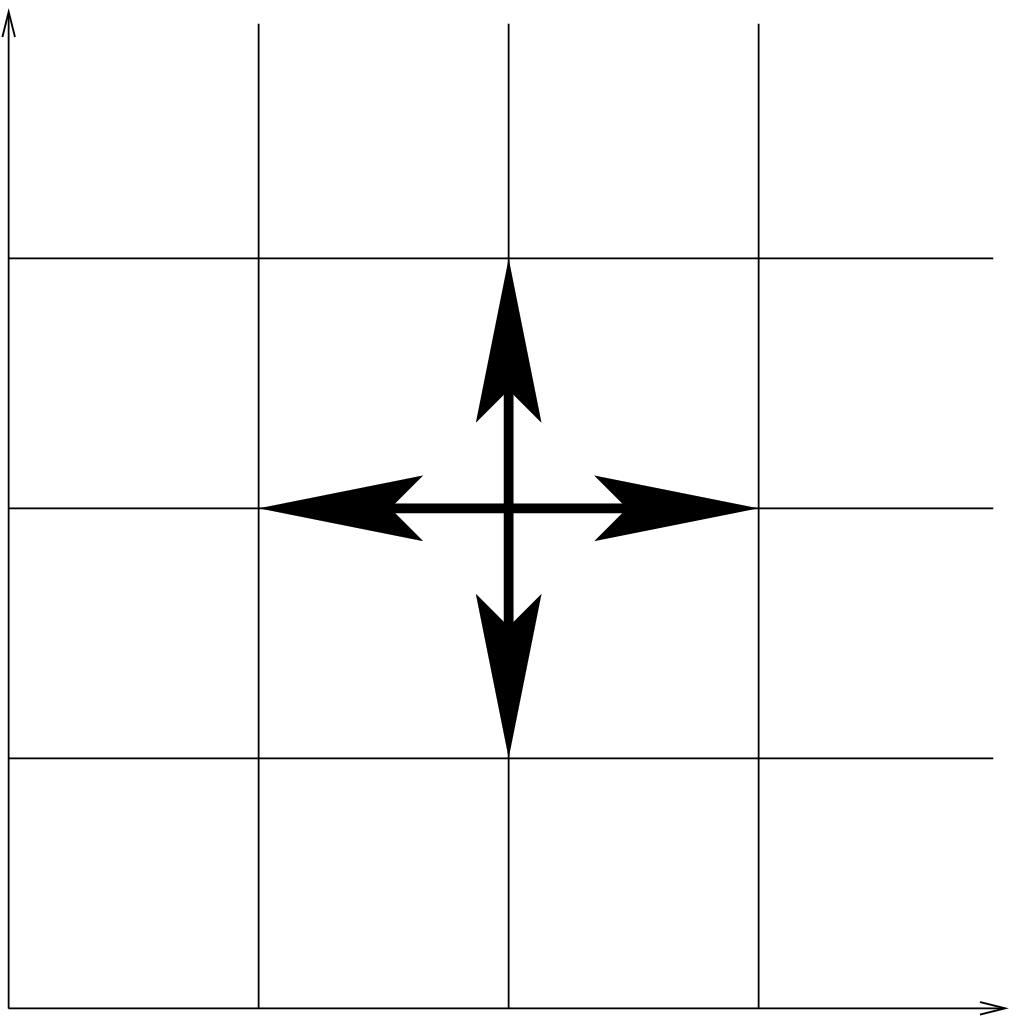}\hspace{60mm} \includegraphics{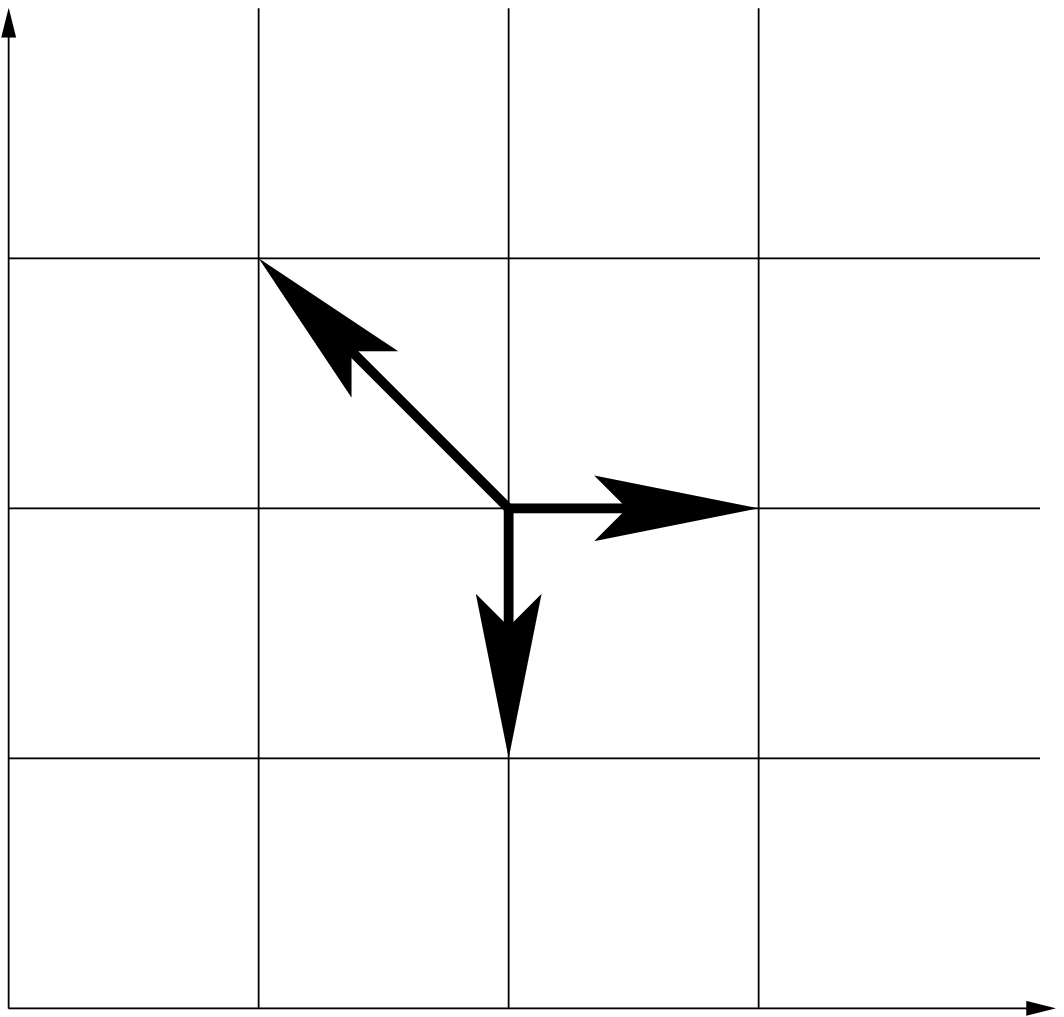}\hspace{60mm}
\includegraphics{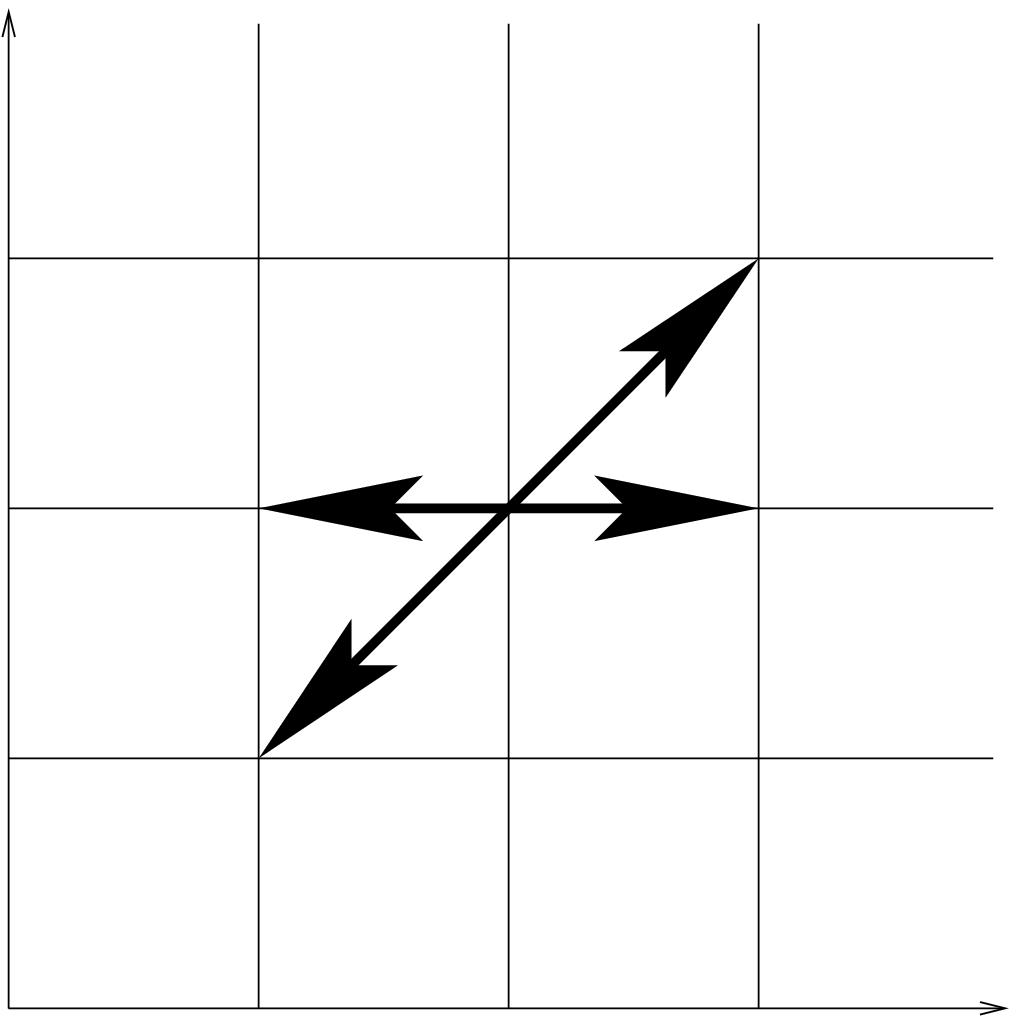}
\end{picture}
\caption{Random walks having groups of order four, six and eight
respectively} \label{SRW_Flatto_X}
\end{figure}

We will now continue the functions $h$ and $\tilde{h}$, initially
defined on the unit disc, to $\mathbb{C}\setminus [x_{3},x_{4}]$ and
$\mathbb{C}\setminus [y_{3},y_{4}]$ as holomorphic functions.
This continuation will have a
twofold interest. First, in Section~\ref{Explicit_form}, we will
have, in order to find explicit expressions of $h$ and $\tilde{h}$,
to solve a boundary value problem, with boundary condition on closed
curves that lie in the exterior of the unit disc. Secondly in
Section~\ref{Green_functions_Martin_boundary}, when we will calculate the
asymptotic of the Green functions, the quantity
$x^{n_{0}}y^{m_{0}}-h(x)-\tilde{h}(y)-h_{00}$ will naturally appear,
evaluated at some $(x,y)$ -- in fact, the saddle-point~-- that is not
in $\mathcal{D}(0,1)^{2}$.

To do this continuation, we will use a uniformization of the curve
$\{(x,y)\in \mathbb{C}^{2} : Q(x,y)=0\}$. Being a Riemann surface of
genus one, we already know that $Q=0$ is homeomorphic to some quotient
$\mathbb{C}/\Gamma$, where $\Gamma$ is a two-dimensional lattice,
that is to say to a parallelogram whose the opposed edges are identified.
In~\cite{FIM}, such a lattice $\Gamma$ and also a bijection between
$\mathbb{C}/\Gamma$ and $Q=0$ are made explicit. Indeed, the authors
find there $\omega_{1}\in \imath \mathbb{R}$ and $\omega_{2}\in
\mathbb{R}$, two functions $\phi$ and $\psi$, such that $\{(x,y)\in
\mathbb{C}^{2} : Q(x,y)=0\}=\{(x(\omega),y(\omega)), \omega\in
\mathbb{C}/\Gamma\}$, where $x(\omega)=\phi(\wp_{1,2}(\omega),\wp_{1,2}'(\omega))$,
$y(\omega)=\psi(\wp_{1,2}(\omega),\wp_{1,2}'(\omega))$,
$\Gamma=\{n_{1}\omega_{1}+n_{2}\omega_{2}, n_{1},n_{2}\in
\mathbb{Z}\}$ and $\wp_{1,2}$ is the classical Weierstrass elliptic function
associated to the periods $\omega_{1}$ and $\omega_{2}$, that are equal to~:
     \begin{equation}\label{omega123}
          \omega_{1}= \imath \int_{x_{1}}^{x_{2}}
          \frac{\text{d}x}{\sqrt{-d(x)}},\hspace{5mm}
          \omega_{2}= \int_{x_{2}}^{x_{3}}
          \frac{\text{d}x}{\sqrt{d(x)}},\hspace{5mm}
          \omega_{3}= \int_{X\left(y_{1}\right)}^{x_{1}}
          \frac{\text{d}x}{\sqrt{d(x)}},
     \end{equation}
$\omega_{3}\in ]0,\omega_{2}[$ being a period that will turn out to
be quite important in the sequel. The functions $\phi$ and $\psi$
are also made explicit~: for instance, if $x_{4}\neq \infty $, then
it is possible to take $\phi(p,p')=x_{4}+d'(x_{4})/(p-d''(x_{4})/6)$ and if
$x_{4}=\infty$, $\phi(p,p')=(6p-d''(0))/d'''(0)$.
\begin{figure}[!ht]
\begin{center}
\begin{picture}(200.00,110.00)
\includegraphics{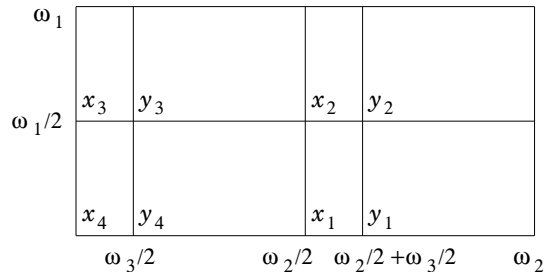}
\end{picture}
\end{center}
\caption{Location of the cuts on the covering surface}
\label{Locations_of_the_cuts}
\end{figure}
\newline Moreover, on $\mathbb{C}/\Gamma$, the automorphisms $\xi$ and $\eta$
take the following particularly nice form~:
     \begin{equation}\label{xi_eta_covering}
          \xi\left(\omega\right)=-\omega,
          \hspace{10mm}
          \eta\left(\omega\right)=-\omega+\omega_{3},
          \hspace{10mm}
          \delta\left(\omega \right)=
          \eta\left(\xi\left(\omega\right)\right)=\omega+\omega_{3}.
     \end{equation}
In particular, the group $\mathcal{H}$ has a finite order if and
only if $\omega_{3}/\omega_{2}\in \mathbb{Q}$ and in this case the
order is given by $2\inf \{ n\in \mathbb{N}^{*} : n
\omega_{3}/\omega_{2}\in \mathbb{N}\}$.

Any function $f$ of the variable $x$ (resp.\ $y$) defined  on some
domain $\mathcal{D}\subset \mathbb{C}$ can be lifted on $\{\omega
\in \mathbb{C}/\Gamma : x(\omega)\in \mathcal{D} \}$ (resp.\
$\{\omega \in \mathbb{C}/\Gamma : y(\omega)\in \mathcal{D} \}$) by
setting $F(\omega)=f(x(\omega))$ (resp.\ $F(\omega)=f(y(\omega))$).
In particular we can lift the generating functions $h$ and
$\tilde{h}$ and we set $H(\omega)=h(x(\omega))$ and
$\tilde{H}(\omega)=\tilde{h}(y(\omega))$, well defined on $\{\omega\in \mathbb{C}/\Gamma
: |x(\omega)|\leq 1\}$ and $\{\omega \in\mathbb{C}/\Gamma: |y(\omega)|\leq 1\}$
respectively. In particular, on $\{\omega \in \mathbb{C}/\Gamma: |x(\omega)|\leq 1,
|y(\omega)|\leq 1\}$, using~(\ref{functional_equation}), we have
$H(\omega)+\tilde{H}(\omega)+h_{00}-x(\omega)^{n_{0}}y(\omega)^{m_{0}}=0$.
Applying several times the Galois automorphisms $\xi$ and $\eta$ to
any point of this domain and laying down
$H(\omega)=H(\xi(\omega))$, $\tilde{H}(\omega)=
\tilde{H}(\eta(\omega))$ the authors of~\cite{FIM} prove the
following fundamental proposition.


\begin{prop}\label{continuation_h_h_tilde_covering}
The functions $H$ and $ \tilde{H}$ can be continued as holomorphic
functions on $\mathbb{C}/\Gamma$, where they satisfy~:
     \begin{eqnarray*}
          H\left(\omega\right)=
          H\left(\xi\left(\omega\right)\right),
          \hspace{5mm}
          \widetilde{H}\left(\omega\right)=
          \widetilde{H}\left(\eta\left(\omega\right)\right),
          \hspace{10mm}
          \forall \omega \in \mathbb{C}/\Gamma,\hspace{17mm} \\
          H\left(\omega\right)+\widetilde{H}\left(\omega\right)+h_{00}-
          x\left(\omega\right)^{n_{0}}
          y\left(\omega\right)^{m_{0}}=0,\hspace{10mm}
          \forall \omega \in \left[\omega_{3}/2,\omega_{2}\right]\times
          \left[0,\omega_{1}/\imath\right].
     \end{eqnarray*}
\end{prop}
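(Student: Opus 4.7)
The plan is to promote the functional equation $H+\tilde H+h_{00}-x^{n_{0}}y^{m_{0}}=0$ step by step from the intersection $\{|x|\le1\}\cap\{|y|\le1\}$ to the whole torus $\mathbb{C}/\Gamma$, using the two Galois involutions $\xi$ and $\eta$ alternately. First I would record two \emph{tautological} invariances: since, by~(\ref{def_xi_eta_Q}), $\xi$ preserves the $x$-coordinate of every point of the curve, for every $\omega$ with $|x(\omega)|\le1$ one has $H(\xi(\omega))=h(x(\xi(\omega)))=h(x(\omega))=H(\omega)$, and symmetrically $\tilde H(\eta(\omega))=\tilde H(\omega)$ on the initial domain of $\tilde H$. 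These identities will be the engine of the continuation.

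Set $\Delta_{x}=\{\omega : |x(\omega)|\le1\}$ and $\Delta_{y}=\{\omega : |y(\omega)|\le1\}$, on which $H$ and $\tilde H$ are initially holomorphic. On $\Delta_{x}\cap\Delta_{y}$ the functional equation~(\ref{functional_equation}) gives
\begin{equation*}
H(\omega)=-\tilde H(\omega)-h_{00}+x(\omega)^{n_{0}}y(\omega)^{m_{0}},
\end{equation*}
whose right-hand side is holomorphic on the whole of $\Delta_{y}$. Since $\Delta_{x}\cap\Delta_{y}$ has non-empty interior (it contains the bidisc $|x|<1,|y|<1$), this extends $H$ to $\Delta_{x}\cup\Delta_{y}$; the identity $H(\omega)=H(-\omega)$ then pushes $H$ further to $\Delta_{x}\cup\Delta_{y}\cup\xi(\Delta_{y})$. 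Symmetrically one extends $\tilde H$ to $\Delta_{x}\cup\Delta_{y}\cup\eta(\Delta_{x})$ by $\tilde H(\omega)=-H(\omega)-h_{00}+x^{n_{0}}y^{m_{0}}$ followed by $\tilde H(\omega)=\tilde H(-\omega+\omega_{3})$.

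Iterating --- ``use the functional equation, apply $\xi$, use the functional equation, apply $\eta$, \ldots'' --- pushes $H$ and $\tilde H$ along successive translates of $\Delta_{x}\cup\Delta_{y}$ by the iterates of $\delta=\eta\xi$, which by~(\ref{xi_eta_covering}) is the translation $\omega\mapsto\omega+\omega_{3}$. The main step, and what I expect to be the chief obstacle, is the geometric verification that
\[
\bigcup_{n\in\mathbb{Z}}\bigl(\Delta_{x}\cup\Delta_{y}\bigr)+n\omega_{3}=\mathbb{C}/\Gamma.
\]
Here one uses that $\Delta_{x}$ and $\Delta_{y}$ lift on the torus to annular neighborhoods of the cuts $[x_{1},x_{2}]\cup[x_{3},x_{4}]$ and $[y_{1},y_{2}]\cup[y_{3},y_{4}]$ respectively (Lemmas~\ref{lemma_branched_points} and~\ref{properties_X_Y}), so each has positive ``width'' in the $\omega_{2}$-direction; in the finite-group case ($\omega_{3}/\omega_{2}\in\mathbb{Q}$) finitely many translates suffice, while in the infinite-group case the orbit $\{n\omega_{3}\bmod\omega_{2}\}$ is dense and the positive width of the annular strips ensures full coverage.

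Finally I must check single-valuedness of the extension: whenever two chains of functional-equation substitutions and $\xi,\eta$ reflections reach the same $\omega$, the two resulting values of $H$ (resp.\ $\tilde H$) must coincide. This follows because at every step one uses either the functional equation (valid wherever $H$ and $\tilde H$ are simultaneously defined) or an involutive identity $H\circ\xi=H$, $\tilde H\circ\eta=\tilde H$; hence agreement on any open set propagates to the whole connected component by the identity principle, and consequently to the whole torus. The last displayed identity of the proposition, valid on the rectangle $[\omega_{3}/2,\omega_{2}]\times[0,\omega_{1}/\imath]$, is then simply the transcription through the uniformization of Subsection~\ref{Galois_automorphisms} of the original functional equation on $\{|x|\le1,|y|\le1\}$, this rectangle being the fundamental domain which lifts that bidisc to the torus.
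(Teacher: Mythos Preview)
Your approach is the one the paper sketches and attributes to \cite{FIM}: alternate the functional equation with the reflections $\xi,\eta$ to propagate $H,\tilde H$ across $\omega_3$-translates. The mechanics of steps 1--3 are fine, though the covering step is cleaner on the universal cover $\mathbb{C}$: there each application of $\delta=\eta\xi$ genuinely shifts the domain by $\omega_3$, so the iterates march to $\pm\infty$ and cover $\mathbb{C}$ once consecutive strips overlap. Your finite/infinite-group dichotomy is then unnecessary, and the finite-group branch as written has a gap (nothing bounds the denominator $q$ of $\omega_3/\omega_2$, so ``positive width'' alone does not force $q$ translates to cover).

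The substantive gap is your last paragraph. The rectangle $[\omega_3/2,\omega_2]\times[0,\omega_1/\imath]$ is \emph{not} the lift of the bidisc $\{|x|\le1,|y|\le1\}$: the paper itself uses later (proof of Corollary~\ref{probability_being_absorbed}) that the point over $(1,1)$ lies strictly inside the complementary strip $[0,\omega_3/2]$, and since $M_x,M_y>0$ this point is not on the boundary. So you have not justified the second displayed identity. What actually has to be argued is that the domain on which the functional equation is \emph{imposed} during the continuation---i.e.\ the region reached by functional-equation steps, as opposed to reflection steps---fills exactly $[\omega_3/2,\omega_2]$ when read in a fixed fundamental domain. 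This hinges on the precise widths and positions of $\Delta_x,\Delta_y$ as vertical strips on the cover, which you have not determined. Relatedly, your single-valuedness paragraph overlooks that the continued $H,\tilde H$ are $\omega_1$-periodic but \emph{not} $\omega_2$-periodic (were they holomorphic on the compact torus $\mathbb{C}/\Gamma$ they would be constant by Liouville, and the functional equation would force $x^{n_0}y^{m_0}$ constant on an open set). The failure of $\omega_2$-periodicity is exactly why the functional equation holds on $[\omega_3/2,\omega_2]$ but fails on $[0,\omega_3/2]$---a failure the paper exploits in Proposition~\ref{simplification_omega_omega_prop}---and this is what your argument must account for rather than elide.
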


\begin{cor}\label{continuation_h_h_tilde}
The function $h$ and $\tilde{h}$ can be continued into holomorphic
functions on $\mathbb{C}\setminus [x_{3},x_{4}]$ and
$\mathbb{C}\setminus [y_{3},y_{4}]$ respectively.
\end{cor}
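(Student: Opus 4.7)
The plan is to descend $H$ through the two-sheeted covering $x:\mathbb{C}/\Gamma\to\mathbb{C}\cup\{\infty\}$. By~(\ref{xi_eta_covering}) the involution $\xi(\omega)=-\omega$ is precisely the deck transformation of this cover: each generic $x_{0}\in\mathbb{C}\setminus\big([x_{1},x_{2}]\cup[x_{3},x_{4}]\big)$ has two preimages $\{\omega,\xi(\omega)\}$. The $\xi$-invariance granted by Proposition~\ref{continuation_h_h_tilde_covering} then makes the formula $h(x_{0}):=H(\omega)$ unambiguous. Since $x(\cdot)$ is a local biholomorphism off the four branch points $x_{1},\ldots,x_{4}$ and $H$ is holomorphic on (the relevant part of) $\mathbb{C}/\Gamma$, the function $h$ so defined is holomorphic on $\mathbb{C}\setminus\big([x_{1},x_{2}]\cup[x_{3},x_{4}]\big)$, and it coincides with the original power series $\sum h_{i}x^{i}$ on the unit disc by the very construction of the lift $H(\omega)=h(x(\omega))$.

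The remaining step is to remove the cut $[x_{1},x_{2}]$. For this I would use the following geometric input from the uniformization, visible in Figure~\ref{Locations_of_the_cuts}: on the Riemann surface of $Y$ the two sheets $S_{0}$ and $S_{1}$ are glued along $[x_{1},x_{2}]$, and this gluing is transported by the uniformization into the action of $\xi$ on $\mathbb{C}/\Gamma$. Concretely, if $\omega_{+}$ and $\omega_{-}$ on the torus approach a common interior point of the cut from opposite sides, then $\omega_{-}=\xi(\omega_{+})$. The invariance $H(\omega_{+})=H(\xi(\omega_{+}))=H(\omega_{-})$ therefore forces the two one-sided limits of $h$ to agree along the open interval $(x_{1},x_{2})$. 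Hence $h$ extends continuously across this interval and, by Morera's theorem, holomorphically; the branch points $x_{1}$ and $x_{2}$ themselves are then removable singularities of a locally bounded holomorphic function. This yields the holomorphic continuation of $h$ on $\mathbb{C}\setminus[x_{3},x_{4}]$.

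For $\tilde h$ the argument is exactly analogous, with $\xi$ replaced by $\eta$, the covering $x$ replaced by $y:\mathbb{C}/\Gamma\to\mathbb{C}\cup\{\infty\}$, and the cut $[x_{1},x_{2}]$ replaced by $[y_{1},y_{2}]$; one obtains the continuation of $\tilde h$ on $\mathbb{C}\setminus[y_{3},y_{4}]$.

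The main delicate point is the geometric identification used in the second paragraph: one must verify that the two sides of the cut $[x_{1},x_{2}]$, viewed in the fundamental parallelogram with periods~(\ref{omega123}), really correspond to a pair $(\omega_{+},\xi(\omega_{+}))$. This is not a formal consequence of the $\xi$-invariance alone but requires reading off the explicit location of the cuts on the torus, as indicated by Figure~\ref{Locations_of_the_cuts}. Once this identification is in place the continuation is automatic; by contrast, the cut $[x_{3},x_{4}]$ is not removed here because $H$ is not \emph{a priori} holomorphic across its lift (the factor $x(\omega)^{n_{0}}y(\omega)^{m_{0}}$ in the functional equation of Proposition~\ref{continuation_h_h_tilde_covering} introduces poles precisely at preimages of $x=\infty$, which lies in $[x_{3},x_{4}]$ whenever $x_{4}<0$ or $x_{4}=\infty$).
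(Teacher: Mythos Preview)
Your approach—descending $H$ through the branched cover $x:\mathbb{C}/\Gamma\to\mathbb{C}\cup\{\infty\}$ via the $\xi$-invariance of Proposition~\ref{continuation_h_h_tilde_covering}—is precisely what the paper has in mind; no separate proof is given there, the result being treated as an immediate corollary. The geometric identification you flag (that the two sides of $[x_{1},x_{2}]$ lift to a $\xi$-orbit) is correct and can be read off from the cut locations in Figure~\ref{Locations_of_the_cuts}; indeed the proof of Proposition~\ref{properties_w} records that the two preimages of $t\in[x_{1},x_{2}]$ lie on the segment $\omega_{2}/2+[0,\omega_{1}]$ and satisfy $\omega_{1}(t)+\omega_{2}(t)=\omega_{1}+\omega_{2}$, i.e.\ $\omega_{2}(t)=\xi(\omega_{1}(t))$ modulo $\Gamma$.

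One correction to your closing remark. Your explanation that $[x_{3},x_{4}]$ survives because $x(\omega)^{n_{0}}y(\omega)^{m_{0}}$ has poles over $x=\infty$ only applies when $x_{4}\leq 0$ or $x_{4}=\infty$; when $x_{4}>0$ the point $\infty$ does not lie on $[x_{3},x_{4}]$, yet $h$ still has a genuine singularity there (this is exactly what drives Section~\ref{Study_solution}). The structural reason is different: the lift of $[x_{3},x_{4}]$ sits on the vertical boundary $\mathrm{Re}(\omega)\in\{0,\omega_{2}\}$ of the fundamental parallelogram, so crossing it would require $\omega_{2}$-periodicity of $H$. But the continuation of $H$ to the universal cover is $\omega_{1}$-periodic only; under $\omega\mapsto\omega+\omega_{2}$ it is merely quasi-periodic. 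This also explains why Proposition~\ref{continuation_h_h_tilde_covering} should not be read as ``holomorphic on the compact torus'' (which would force $H$ constant)—your hedge ``the relevant part of $\mathbb{C}/\Gamma$'' is well placed.
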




\subsection{Absorption probability}
\label{Finite_groups_and_s_s_functions}

In the next section we will find explicitly
$h(x)$, $\tilde{h}(y)$ and $h_{00}$
that will provide of course the absorption
probability $h(1)+\tilde{h}(1)+h_{00}$.
However, this expression is usable difficultly.
In this subsection we prove that in a special case of finite
groups of the random walk (see Definition~\ref{defigr})
the probability of absorption
takes a particularly nice form, see Corollary~\ref{probability_being_absorbed}.
Furthermore, in the case of the group of any order,
Proposition~\ref{absas}
gives the precise exponential asymptotic of the absorption
probability as
$n_{0},m_{0}\to \infty$.

We first note that the quantity
$H(\omega)+\tilde{H}(\omega)+h_{00}-x(\omega)^{n_{0}}
y(\omega)^{m_{0}}$, for $\omega$ in $[0,\omega_{3}/2]\times
[0,\omega_{1}/\imath]$, can be considerably simplified in some cases,
namely when the group is finite (i.e.\ $\omega_{2}/\omega_{3}\in
\mathbb{Q}_{+}$) and when in addition $\omega_{2}/\omega_{3}\in
\mathbb{N}$. This is for example the case of the walks such that
$\Delta=0$, for which $\omega_{2}/\omega_{3}=2$ -- indeed, we have
already seen that both assertions $\Delta=0$ and $\omega_{2}/\omega_{3}=2$
are equivalent to the fact that $\mathcal{H}$ is of order four~--~; this is also the
case of the walk in the Weyl chamber of
$\mathfrak{sl}_{3}(\mathbb{C})$, see Figure~\ref{SRW_Flatto_X},
since in this case the group is of order six, hence
$\omega_{2}/\omega_{3}$ is equal to $3/2$ or $3$, and by a direct
calculation we show that $\omega_{2}/\omega_{3}=3$. On the other
hand, this is not the case of the walk whose transition
probabilities are represented on the right part of
Figure~\ref{SRW_Flatto_X}~: although the group is of order eight, we
don't have $\omega_{3}=\omega_{2}/4$ but $\omega_{3}=3\omega_{2}/4$.

\begin{prop}\label{simplification_omega_omega_prop}
Suppose that $\omega_{2}/\omega_{3}\in \mathbb{N}$~; in particular
this implies that $\mathcal{H}$ is of order
$2\omega_{2}/\omega_{3}$. Then if $\omega\in [0,\omega_{3}/2]\times
[0,\omega_{1}/\imath]$,
     \begin{equation}\label{simplification_omega_omega}
          H\left(\omega\right)+\widetilde{H}\left(\omega\right)+h_{00}-
          x\left(\omega\right)^{n_{0}}y\left(\omega\right)^{m_{0}}=
          -\sum_{w\in \mathcal{H}}\left(-1\right)^{l\left(w\right)}
          x\left(w\left(\omega\right)\right)^{n_{0}}
          y\left(w\left(\omega\right)\right)^{m_{0}},
     \end{equation}
where $l(w)$ is the length of the word $w$, that is the smallest $r$
for which we can write $w=s_{1}\cdots s_{r}$, with $s_{i}$ equal to
$\xi$ or $\eta$.
\end{prop}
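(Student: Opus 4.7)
\medskip

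\noindent\textbf{Proof plan.}
The plan is to set $F(\omega) := H(\omega)+\widetilde{H}(\omega)+h_{00}-x(\omega)^{n_{0}}y(\omega)^{m_{0}}$ and exploit that, by Proposition~\ref{continuation_h_h_tilde_covering}, $F$ vanishes identically on the ``good'' rectangle $R_{1}:=[\omega_{3}/2,\omega_{2}]\times[0,\omega_{1}/\imath]$; we wish to evaluate $F$ on the ``bad'' rectangle $R_{0}:=[0,\omega_{3}/2]\times[0,\omega_{1}/\imath]$. Write $n:=\omega_{2}/\omega_{3}\in\mathbb{N}$, so $\mathcal{H}$ is dihedral of order $2n$. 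By~(\ref{xi_eta_covering}), the $n$ rotations (even length) act by $\delta^{k}\omega=\omega+k\omega_{3}$ and the $n$ reflections (odd length) act by $\xi\delta^{k}\omega=-\omega-k\omega_{3}$, for $k=0,\dots,n-1$.

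The first step is a geometric check: for any $\omega\in R_{0}$, the points $\omega+k\omega_{3}$ with $k=1,\dots,n-1$ have real part in $[k\omega_{3},(k+\tfrac12)\omega_{3}]\subset[\omega_{3}/2,\omega_{2}]$, so they lie in $R_{1}$; similarly, the reflection images $-\omega-k\omega_{3}$, reduced modulo $\omega_{2}=n\omega_{3}$, have real part in $[(n-k-\tfrac12)\omega_{3},(n-k)\omega_{3}]$ and therefore also lie in $R_{1}$ for $k=0,\dots,n-1$. Hence $F$ vanishes at the $2n-1$ points $\{\omega+k\omega_{3}:1\le k\le n-1\}\cup\{-\omega-k\omega_{3}:0\le k\le n-1\}$.

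The second step is to rewrite the $H$ and $\widetilde H$ contributions using the invariances $H(-\omega')=H(\omega')$ and $\widetilde H(-\omega'+\omega_{3})=\widetilde H(\omega')$ supplied by Proposition~\ref{continuation_h_h_tilde_covering}, together with the $\omega_{2}$-periodicity. These give
\[
  H(-\omega-k\omega_{3})=H(\omega+k\omega_{3}),\qquad
  \widetilde H(-\omega-k\omega_{3})=\widetilde H(\omega+(k+1)\omega_{3}),
\]
for every $k$, so that the $\widetilde H$-terms are cyclically shifted by one relative to the $H$-terms.

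The final step is a short telescoping argument. Summing $F(-\omega-k\omega_{3})=0$ for $k=0,\dots,n-1$ and applying the two identities above yields
\[
  \sum_{k=0}^{n-1}H(\omega+k\omega_{3})+\sum_{k=1}^{n}\widetilde H(\omega+k\omega_{3})+n\,h_{00}=\sum_{k=0}^{n-1}x(-\omega-k\omega_{3})^{n_{0}}y(-\omega-k\omega_{3})^{m_{0}},
\]
which, after using $\widetilde H(\omega+n\omega_{3})=\widetilde H(\omega)$, is equation (I). Summing $F(\omega+k\omega_{3})=0$ for $k=1,\dots,n-1$ gives equation (II). Subtracting (II) from (I) cancels all occurrences of $H(\omega+k\omega_{3})$ and $\widetilde H(\omega+k\omega_{3})$ with $k\ge 1$, leaving
\[
  H(\omega)+\widetilde H(\omega)+h_{00}=\sum_{k=0}^{n-1}x(-\omega-k\omega_{3})^{n_{0}}y(-\omega-k\omega_{3})^{m_{0}}-\sum_{k=1}^{n-1}x(\omega+k\omega_{3})^{n_{0}}y(\omega+k\omega_{3})^{m_{0}},
\]
which, after identifying the two sums with the alternating sum of $T(w\omega):=x(w\omega)^{n_{0}}y(w\omega)^{m_{0}}$ over reflections (odd length, sign $-1$) and non-identity rotations (even length, sign $+1$), is exactly~(\ref{simplification_omega_omega}). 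The only nontrivial point is the bookkeeping in the second step: one must notice that the shift in the $\eta$-invariance forces the $\widetilde H$-sum over reflections to re-index to the $H$-sum over rotations, and this is precisely what makes the two families of equations match up with the right multiplicities so that all ``interior'' contributions cancel.
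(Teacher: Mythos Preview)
Your proof is correct and takes a genuinely different route from the paper's. Both arguments rest on the same geometric fact---that for $\omega\in R_{0}$ and $w\in\mathcal{H}\setminus\{\mathrm{id}\}$ the point $w(\omega)$ lands in $R_{1}$ where $F$ vanishes---which you verify by a direct computation of real parts and which the paper packages as a separate lemma on fundamental domains (Lemma~\ref{lemma_domain_fundamental}). The algebraic organisation, however, differs: the paper proceeds by induction on the word length, repeatedly using $H\circ\xi=H$ and $\widetilde H\circ\eta=\widetilde H$ to rewrite $H(\omega)+\widetilde H(\omega)$ as an alternating sum of $H(w\omega)+\widetilde H(w\omega)$ over $w\neq\mathrm{id}$, and only then invokes $F=0$ at each such $w\omega$. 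Your argument is more global: you sum the $n$ equations $F=0$ at the reflection images and the $n-1$ equations $F=0$ at the non-identity rotation images, observe that the $\xi$- and $\eta$-invariances produce a one-step cyclic shift between the $H$- and $\widetilde H$-sums, and subtract so that everything except $H(\omega)+\widetilde H(\omega)+h_{00}$ cancels. This telescoping approach is shorter and avoids the induction; the paper's approach is more step-by-step and perhaps makes the mechanism clearer for small orders (it treats the order-four case first as an illustration).
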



\begin{proof}
The key point of the proof of
Proposition~\ref{simplification_omega_omega_prop}, that also
explains why we have done the hypothesis $\omega_{2}/\omega_{3}\in
\mathbb{N}$, is that in this only case, the fundamental domain
$\chi_{0}=[0,\omega_{2}/(2n)[\times [0,\omega_{1}/\imath[$ and the domain
$[0,\omega_{3}/2[\times [0,\omega_{1}/\imath[$ of
Proposition~\ref{continuation_h_h_tilde_covering} coincide
(by $\chi_{0}$ is a fundamental domain we mean that each $\omega \in
\mathbb{C}/\Gamma $ is conjugate under $\mathcal{H}$ to one and only
one point of $\chi_{0}$).

Let us first give a proof in the case of the groups of order four.
Note that $H+\tilde{H}+h_{00}=H(\xi)+\tilde{H}(\eta)+h_{00}$, since
$H$ (resp.\ $\tilde{H}$) is invariant w.r.t.\ $\xi$ (resp.\ $\eta$),
thanks to Proposition~\ref{continuation_h_h_tilde_covering}. So
$H+\tilde{H}+h_{00}=H(\xi)+\tilde{H}(\xi)+h_{00}
+H(\eta)+\tilde{H}(\eta)+h_{00}-(H(\eta)+\tilde{H}(\xi)+h_{00})$.
Using once again the invariance properties of $H$ and $\tilde{H}$,
we can write $H+\tilde{H}+h_{00}=H(\xi)+\tilde{H}(\xi)+h_{00}
+H(\eta)+\tilde{H}(\eta)+h_{00}-(H(\xi \eta)+\tilde{H}(\eta
\xi)+h_{00})$. Since the order of $\mathcal{H}$ is four, $\xi
\eta=\eta \xi$ and the previous equation becomes~:
$H+\tilde{H}+h_{00}=H(\xi)+\tilde{H}(\xi)+h_{00}+
H(\eta)+\tilde{H}(\eta)+h_{00}-(H(\xi \eta)+\tilde{H}(\xi
\eta)+h_{00})$. If $\omega\in \chi_{0}$, then $w(\omega)\in
(\mathbb{C}/\Gamma) \setminus \chi_{0}$ for all $w\in
\mathcal{H}\setminus \{\text{id}\}$. Indeed, we will prove in
Lemma~\ref{lemma_domain_fundamental} that $\chi_{0}$ is a
fundamental domain. In addition, thanks to
Proposition~\ref{continuation_h_h_tilde_covering}, the functional
equation $H(\omega)+\tilde{H}(\omega)+h_{00}-x(\omega)^{n_{0}}y(\omega)^{m_{0}}=0$
is verified in $[\omega_{3}/2,\omega_{2}[\times [0,\omega_{1}/\imath[$
which coincides with $(\mathbb{C}/\Gamma) \setminus \chi_{0}$. In
other words, we can replace
$H(w(\omega))+\tilde{H}(w(\omega))+h_{00}$ by
$x(w(\omega))^{n_{0}}y(w(\omega))^{m_{0}}$ for any of three elements
$w\in \mathcal{H}\setminus \{\text{id}\}$.
Proposition~\ref{simplification_omega_omega_prop} is thus proved in
the case of the groups $\mathcal{H}$ of order four.

In the general case  $\omega_{2}/\omega_{3}=n$, for $k\in \{1,\ldots
,n-1\}$, denote by $w_{1,k}$ and $w_{2,k}$ the two reduced words of
length $k$, i.e.\ the words $s_{1}\cdots s_{k}$ and $s_{2}\cdots
s_{k}s_{1}$, where for $r\geq 1$, $s_{2r}=\xi$ and $s_{2r-1}=\eta$,
and denote by $w_{n}$ the only word of length $n$. The fact that
there is only one word of length $n$ follows from the equality $\inf
\{n\in \mathbb{N}^{*} : \delta^{n}=\text{id} \}=\inf
\{n\in\mathbb{N}^{*} : s_{1}s_{2}\cdots s_{n}=s_{2}\cdots
s_{n}s_{1}\}$. Then, by induction, we prove that
     \begin{eqnarray*}
          H\left(\omega\right)+\widetilde{H}\left(\omega\right)
          &=&\sum_{k=1}^{n-1}\left(-1\right)^{k+1}
          \Big(H\left(w_{1,k}\left(\omega\right)\right)
          +\widetilde{H}\left(w_{1,k}\left(\omega\right)\right)+
          H\left(w_{2,k}\left(\omega\right)\right)
          +\widetilde{H}\left(w_{2,k}\left(\omega\right)\right)\Big)\\
          &-&\left(-1\right)^{n}
          \Big(H\left(w_{n}\left(\omega\right)\right)
          +\widetilde{H}\left(w_{n}\left(\omega\right)\right)\Big).
     \end{eqnarray*}
Since $\mathcal{H}=\{\text{id},w_{1,1},w_{2,1},\ldots
,w_{1,n-1},w_{2,n-1},w_{n}\}$ and since $[0,\omega_{3}/2[\times
[0,\omega_{1}/\imath[$ is a fundamental domain, if $\omega \in
[0,\omega_{3}/2[\times [0,\omega_{1}/\imath[= [0,\omega_{2}/(2n)[\times
[0,\omega_{1}/\imath[$ then thanks to
Proposition~\ref{continuation_h_h_tilde_covering}, for any $w\in
\mathcal{H}\setminus \{\text{id}\}$,
$H(w(\omega))+\tilde{H}(w(\omega))+h_{00}=x(w(\omega))^{n_{0}}y(w(\omega))^{m_{0}}$.
Moreover, $l(w_{n})=n$ and for $k\in \{1,\ldots ,n-1\}$ and $i\in
\{1,2\}$, $l(w_{i,k})={k}$, so~(\ref{simplification_omega_omega}) is proved.
\end{proof}

\begin{lem}\label{lemma_domain_fundamental}
Suppose that the group $\mathcal{H}$ is finite of order $2n$.
Then for any $k\in\{0,\ldots ,2n-1\}$, the domain
$\chi_{k}=[k\omega_{2}/(2n),(k+1)\omega_{2}/(2n)[\times
[0,\omega_{1}/\imath[$ is a fundamental domain, i.e.\ each $\omega \in
[0,\omega_{2}[\times [0,\omega_{1}/\imath[ $ is conjugate under
$\mathcal{H}$ to one and only one point of $\chi_{k}$.
\end{lem}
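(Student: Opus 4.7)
The plan is to show that the $2n$ subsets $\{w(\chi_k): w \in \mathcal{H}\}$ tile $[0,\omega_2)\times[0,\omega_1/\imath)$ (identified with $\mathbb{C}/\Gamma$) with pairwise disjoint interiors. Since each $\chi_k$ has area $(1/(2n))|\mathbb{C}/\Gamma|$ and $|\mathcal{H}|=2n$, an area count then forces each point in $[0,\omega_2)\times[0,\omega_1/\imath)$ to have (generically exactly) one $\mathcal{H}$-conjugate in $\chi_k$. Hence the core task reduces to checking that $w(\chi_k)\cap \chi_k$ has empty interior for every $w\in\mathcal{H}\setminus\{\text{id}\}$.

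\textit{Setup.} Since $|\mathcal{H}|=2n$, the order formula stated after (\ref{xi_eta_covering}) makes $n$ minimal with $n\omega_3\in\Gamma$. As $\omega_3\in\mathbb{R}$ (by (\ref{omega123})) and $\omega_1\in\imath\mathbb{R}$, this yields $n\omega_3=p\omega_2$ for some $p\in\{1,\dots,n-1\}$ with $\gcd(p,n)=1$. Thus $\omega_3\equiv p\omega_2/n\pmod{\Gamma}$, and (\ref{xi_eta_covering}) realizes $\mathcal{H}$ explicitly as the $2n$ maps $\delta^r:\omega\mapsto \omega+rp\omega_2/n$ and $\xi\delta^r:\omega\mapsto -\omega - rp\omega_2/n$ for $r=0,\ldots,n-1$.

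\textit{The two cases.} For $w=\delta^r$ with $1\le r\le n-1$, the real projection of $w(\chi_k)$ is the interval $[k\omega_2/(2n),(k+1)\omega_2/(2n))$ shifted by $rp\omega_2/n \pmod{\omega_2}$; overlap with $\chi_k$'s own real projection would force $rp\equiv 0\pmod n$, impossible since $\gcd(p,n)=1$ and $0<r<n$. For $w=\xi\delta^r$, computing $-x-rp\omega_2/n\pmod{\omega_2}$ as $x$ ranges over $[k\omega_2/(2n),(k+1)\omega_2/(2n))$ shows, up to boundary, that the real projection of $w(\chi_k)$ is $[(2n-k-1-2rp)\omega_2/(2n),(2n-k-2rp)\omega_2/(2n))\pmod{\omega_2}$; equating this interval to $[k\omega_2/(2n),(k+1)\omega_2/(2n))$ gives $2(k+rp)\equiv -1\pmod{2n}$, impossible by parity since the left side is even while $-1$ is odd modulo $2n$. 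So both cases produce empty-interior intersection, establishing the tiling.

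\textit{Main difficulty.} The substantive point is the parity obstruction in the reflective case, which crucially uses that $\chi_k$ has width $\omega_2/(2n)$, exactly half the orbit spacing $\omega_2/n$ of $\langle\delta\rangle$. The boundaries of the $\chi_k$ form a measure-zero set where orbit coincidences (non-trivial stabilizers) may occur, so the "one and only one" conjugate holds generically; this is what is needed for the application to Proposition~\ref{simplification_omega_omega_prop}.
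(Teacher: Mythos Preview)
Your argument is correct and reaches the same conclusion as the paper, but the organization differs. The paper argues constructively: it shows that for even $k$ the translates $\{\delta^r(\chi_k)\}_{r=0}^{n-1}$ are exactly the even-indexed strips $\chi_{2q}$, then uses $\xi(\chi_0)=\chi_{2n-1}$ (up to boundary) to see that $\{\delta^r\xi\delta^m(\chi_k)\}$ fills in the odd-indexed strips, and treats odd $k$ symmetrically. You instead show that no nontrivial $w\in\mathcal{H}$ can send $\chi_k$ back onto itself---for rotations via $\gcd(p,n)=1$, for reflections via the neat parity obstruction $2(k+rp)\equiv -1\pmod{2n}$---and then let the area count supply existence. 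The underlying computation (tracking real parts of strips under $\omega\mapsto\omega+r\omega_3$ and $\omega\mapsto-\omega-r\omega_3$) is identical; your version makes the role of the half-width $\omega_2/(2n)$ more transparent, while the paper's version gives an explicit bijection $w\mapsto\chi_j$. Your caveat about boundaries is well placed and in fact applies equally to the paper's proof (e.g.\ $\xi(0)=0\in\chi_0$, so $\xi(\chi_0)=\chi_{2n-1}$ is literally false on a measure-zero set); as you note, holomorphic continuation in Proposition~\ref{simplification_omega_omega_prop} makes this harmless.
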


\begin{proof}
Denote by $\Lambda_{\mu}=\mu +[0,\omega_{1}]$ the vertical segment
with abscissa $\mu$. Then, with~(\ref{xi_eta_covering}), we can
describe the actions of $\xi$ and $\eta$ on these segments. So, for
any $\mu$ in $[0,\omega_{2}]$,
$\xi(\Lambda_{\mu})=\Lambda_{\omega_{2}-\mu}$. Also, if $\mu \in
[0,\omega_{3}]$, then $\eta(\Lambda_{\mu})=\Lambda_{\omega_{3}-\mu}$
and if $\mu \in ]\omega_{3},\omega_{2}]$ then
$\eta(\Lambda_{\mu})=\Lambda_{\omega_{3}+\omega_{2}-\mu}$. Of
course, we also know the action of the elements of $\mathcal{H}$ on
the domains $\chi_{k}=[k\omega_{2}/(2n),(k+1)\omega_{2}/(2n)[\times
[0,\omega_{1}/\imath[$, since we know how these automorphisms act on the
boundaries of these sets.

Suppose first that $k$ is even. Then the $\cup_{p=0}^{n-1}
\delta^{p}(\chi_{k})=\cup_{q=0}^{n-1} \chi_{2q}$. In particular,
there exists $m\in \{0,\ldots ,n-1\}$ such that
$\delta^{m}(\chi_{k})=\chi_{0}$. Thanks to~(\ref{xi_eta_covering}),
we have $\xi(\chi_{0})=\chi_{2n-1}$, so
$\xi(\delta^{m}(\chi_{k}))=\chi_{2n-1}$. Also, $\cup_{p=0}^{n-1}
\delta^{p}(\chi_{2n-1})=\cup_{q=0}^{n-1} \chi_{2q+1}$. Of course,
$(\cup_{q=0}^{n-1} \chi_{2q+1})\cup (\cup_{q=0}^{n-1}
\chi_{2q})=[0,\omega_{2}[\times [0,\omega_{1}/\imath[$ and
$\mathcal{H}=\{\text{id},\delta,\ldots ,
\delta^{n-1},\xi\delta^{m},\delta\xi\delta^{m},\ldots,
\delta^{n-1}\xi\delta^{m}\}$, for any $m\in \{0,\ldots ,n-1\}$. For
example, in Figure~\ref{domain_fundamental} are represented the
domain $\chi_{0}$ and its images under $\mathcal{H}$ in the
particular case $\omega_{3}=\omega_{2}/n$.
\begin{figure}[!ht]
\begin{center}
\begin{picture}(310.00,90.00)
\includegraphics{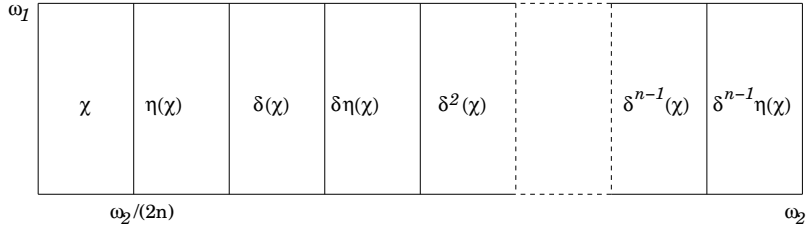}
\end{picture}
\end{center}
\caption{$\chi_{0} =[0,\omega_{2}/(2n)[\times [0,\omega_{1}/\imath[$ is a
fundamental domain} \label{domain_fundamental}
\end{figure}
Lemma~\ref{lemma_domain_fundamental} is thus proved if $k$ is even.
The proof is quite similar in case of odd $k$, so we omit it.
\end{proof}

\begin{cor}\label{probability_being_absorbed}
Suppose that $\omega_{2}/\omega_{3}\in \mathbb{N}$. Then the
probability of being absorbed is equal to~:
     \begin{equation*}
          \mathbb{P}_{n_{0},m_{0}}\left((X,Y)
          \ \textnormal{is absorbed}\right)=
          h\left(1\right)+\widetilde{h}\left(1\right)+h_{00}=
          1-\sum_{w\in \mathcal{H}}\left(-1\right)^{l\left(w\right)}
          f_{n_{0},m_{0}}\left(w\left(1,1\right)\right),
     \end{equation*}
where $f_{n_{0},m_{0}}(x,y)=x^{n_{0}}y^{m_{0}}$ and the
automorphisms of $\mathcal{H}$ are here defined by
using~(\ref{def_xi_eta_Q}).
\end{cor}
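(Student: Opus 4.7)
The plan is to deduce the corollary by evaluating the master identity~(\ref{simplification_omega_omega}) of Proposition~\ref{simplification_omega_omega_prop} at a particular point $\omega^{\ast}\in\mathbb{C}/\Gamma$, namely one for which $(x(\omega^{\ast}),y(\omega^{\ast}))=(1,1)$. Such a point exists because $Q(1,1)=1\cdot 1\cdot\big(\sum_{i,j}p_{i,j}-1\big)=0$, so $(1,1)$ lies on the algebraic curve $\{Q=0\}$, and the uniformization of Subsection~\ref{Galois_automorphisms} produces a corresponding $\omega^{\ast}$. Among the two preimages of $x=1$ under $\pi$, Lemma~\ref{properties_X_Y}~(1) shows that the choice $y=1$ corresponds to $Y_{1}$, so $\omega^{\ast}$ is uniquely determined modulo $\Gamma$.

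With $\omega^{\ast}$ at hand, I would substitute $\omega=\omega^{\ast}$ in~(\ref{simplification_omega_omega}). Because $H(\omega)=h(x(\omega))$ and $\widetilde{H}(\omega)=\widetilde{h}(y(\omega))$ and because $x(\omega^{\ast})^{n_{0}}y(\omega^{\ast})^{m_{0}}=1$, the left-hand side becomes $h(1)+\widetilde{h}(1)+h_{00}-1$. For the right-hand side, the key observation is that the Galois automorphisms $\xi,\eta$ on $\mathbb{C}/\Gamma$, given by~(\ref{xi_eta_covering}), project via $(x(\cdot),y(\cdot))$ to the birational involutions of the curve described by~(\ref{def_xi_eta_Q}); consequently, for every word $w\in\mathcal{H}$,
$$\big(x(w(\omega^{\ast})),\,y(w(\omega^{\ast}))\big)=w(1,1),$$
with $w$ on the right interpreted as the corresponding composition of the maps~(\ref{def_xi_eta_Q}). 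Therefore $x(w(\omega^{\ast}))^{n_{0}}y(w(\omega^{\ast}))^{m_{0}}=f_{n_{0},m_{0}}(w(1,1))$, and rearranging yields the claimed expression. The identification $\mathbb{P}_{n_{0},m_{0}}((X,Y)\text{ is absorbed})=h(1)+\widetilde{h}(1)+h_{00}$ itself is just the total probability of absorption partitioned over the three types of absorbing states, the series converging at $x=y=1$ because the $h_{i}$, $\widetilde{h}_{j}$, $h_{00}$ are probabilities summing to at most one.

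The one point that requires care is whether $\omega^{\ast}$ actually lies in the rectangle $[0,\omega_{3}/2]\times[0,\omega_{1}/\imath]$ where Proposition~\ref{simplification_omega_omega_prop} is stated. I expect this to be the main (but mild) obstacle. Either one argues directly from the explicit formulas for $x(\omega),y(\omega)$ and the location of the real period that $\omega^{\ast}$ falls in this fundamental domain, or one notes that both sides of~(\ref{simplification_omega_omega}) are meromorphic functions of $\omega$ on all of $\mathbb{C}/\Gamma$ (the left side by Proposition~\ref{continuation_h_h_tilde_covering} and the right side being a finite sum of such terms), so the identity extends by analytic continuation to the entire torus and the specific location of $\omega^{\ast}$ becomes immaterial.
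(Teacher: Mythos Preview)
Your approach is essentially the same as the paper's: evaluate Proposition~\ref{simplification_omega_omega_prop} at the point $\omega^{\ast}$ lying over $(1,1)$ and unwind the definitions. The paper's proof consists of a single sentence asserting that $\omega^{\ast}\in[0,\omega_{3}/2]\times[0,\omega_{1}/\imath]$ (without further justification) and then invoking the proposition, so your identification of this location as the only nontrivial point is exactly right, and your analytic-continuation fallback is a valid way to sidestep it.
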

\begin{proof}
The proof is simply based on the fact that the point lying over
$(1,1)$ belongs to $[0,\omega_{3}/2]\times [0,\omega_{1}/\imath]$,
so that Corollary~\ref{probability_being_absorbed} is an immediate consequence of
Proposition~\ref{simplification_omega_omega_prop}.
\end{proof}

We can therefore easily calculate the probability of being absorbed
for the walks verifying $\Delta=0$, since in this case the group is
of order four and $\omega_{2}/\omega_{3}=2$. In particular, we find
again Proposition~28 of~\cite{Ras}.
Corollary~\ref{probability_being_absorbed} applies also to the walk
in the Weyl chamber of $\mathfrak{sl}_{3}(\mathbb{C})$, see the walk
whose transition probabilities are  drawn in the middle of
Figure~\ref{SRW_Flatto_X}, since in this case
$\omega_{2}/\omega_{3}=3$.

In the general case, the probability of being absorbed,
$h_{00}+h(1)+\tilde{h}(1)$, verifies the following inequality.
\begin{prop}\label{absas}
The probability of being absorbed can be 
bounded from above and below as follows~:
     \begin{equation}\label{probability_absorbed_general_case}
          A/2\leq h_{00}+h(1)+\widetilde{h}(1) \leq  A,\hspace{5mm}
          A=\left(\frac{p_{1-1}+p_{0-1}+p_{-1-1}}{p_{11}+p_{01}+p_{-11}} \right)^{n_{0}}
           +\left(\frac{p_{-1-1}+p_{-10}+p_{-11}}{p_{11}+p_{10}+p_{1-1}} \right)^{m_{0}}.
     \end{equation}
\end{prop}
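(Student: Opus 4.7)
The approach is to sandwich $h_{00}+h(1)+\widetilde h(1)$ between two one-dimensional gambler's ruin probabilities, which will turn out to be exactly the two summands in $A$. The idea is to pass to the unrestricted spatially homogeneous walk on $\mathbb{Z}^2$ and to exploit the fact that each marginal process is itself a one-dimensional random walk with positive drift, for which the hitting probability of the origin is explicit.

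Concretely, I extend $(X,Y)$ to a spatially homogeneous random walk $(\widetilde X,\widetilde Y)$ on the whole of $\mathbb{Z}^2$ by using the transition probabilities $p_{ij}$ from every site. Since the absorbed walk and the extended walk coincide up to the first visit to the axes, writing $\tau_X=\inf\{n\geq 0:\widetilde X(n)=0\}$ and $\tau_Y=\inf\{n\geq 0:\widetilde Y(n)=0\}$ one gets
\begin{equation*}
h_{00}+h(1)+\widetilde h(1)
= \mathbb{P}_{(n_{0},m_{0})}\bigl(\tau_X<\infty\ \text{or}\ \tau_Y<\infty\bigr).
\end{equation*}
The marginal $\widetilde Y$ is a one-dimensional random walk with up- and down-step probabilities $p_y=p_{11}+p_{01}+p_{-11}$ and $q_y=p_{1-1}+p_{0-1}+p_{-1-1}$ (the remaining mass being a zero step). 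By hypothesis~(M), $M_y=p_y-q_y>0$, in particular $p_y>0$, and the standard gambler's ruin argument -- solving $(p_y+q_y)f_k=p_yf_{k+1}+q_yf_{k-1}$ with $f_0=1$ and $f_k\to 0$ -- yields $\mathbb{P}_{m_{0}}(\tau_Y<\infty)=(q_y/p_y)^{m_{0}}$. An identical computation for the $X$-marginal (using $p_x=p_{11}+p_{10}+p_{1-1}$ and $q_x=p_{-1-1}+p_{-10}+p_{-11}$) gives $\mathbb{P}_{n_{0}}(\tau_X<\infty)=(q_x/p_x)^{n_{0}}$, and the sum of these two quantities is exactly the constant $A$ appearing in the statement.

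With this identification, both inequalities in~(\ref{probability_absorbed_general_case}) become elementary. The upper bound is the union bound $\mathbb{P}(\tau_X<\infty\ \text{or}\ \tau_Y<\infty)\leq \mathbb{P}(\tau_X<\infty)+\mathbb{P}(\tau_Y<\infty)=A$. The lower bound uses that the probability of a union dominates each of its terms, hence their maximum, so $\mathbb{P}(\tau_X<\infty\ \text{or}\ \tau_Y<\infty)\geq \max\bigl(\mathbb{P}(\tau_X<\infty),\mathbb{P}(\tau_Y<\infty)\bigr)\geq A/2$ by the trivial inequality $\max(a,b)\geq (a+b)/2$. There is no real obstacle in this proof; the only mildly delicate point is to justify the coupling of the absorbed chain with the extended chain up to time $\min(\tau_X,\tau_Y)$, which is standard because the transition law $p_{ij}$ is the same at every interior site.
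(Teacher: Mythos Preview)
Your argument is correct and considerably more elementary than the paper's. The paper works on the Riemann surface: it uses the invariances $H=H\circ\xi$ and $\widetilde H=\widetilde H\circ\eta$ together with the functional equation of Proposition~\ref{continuation_h_h_tilde_covering} to derive the exact identity
\[
2\bigl(h_{00}+h(1)+\widetilde h(1)\bigr)=A'+\bigl[h(1)-h(\widetilde c(1)/\widetilde a(1))\bigr]+\bigl[\widetilde h(1)-\widetilde h(c(1)/a(1))\bigr],
\]
with $A'=(c(1)/a(1))^{m_0}+(\widetilde c(1)/\widetilde a(1))^{n_0}$, and then bounds the two bracketed terms by positivity and monotonicity of $h,\widetilde h$ on $[0,1]$. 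Your coupling with the unrestricted walk and the one-dimensional gambler's-ruin formula bypass this analytic machinery entirely and give the same bounds straight from the union bound and $\max(a,b)\geq(a+b)/2$. What you give up is the exact identity above, which the analytic route provides as a byproduct; what you gain is a proof that uses nothing beyond nearest-neighbour one-dimensional ruin.

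One point to flag: the sum you actually compute is $(q_y/p_y)^{m_0}+(q_x/p_x)^{n_0}$, which is \emph{not} literally the $A$ printed in the statement --- there the exponents $n_0$ and $m_0$ are interchanged. Your version is the correct one (it is also what the paper's own derivation yields, since $\xi(1,1)=(1,c(1)/a(1))$ contributes $(c(1)/a(1))^{m_0}$ and $\eta(1,1)=(\widetilde c(1)/\widetilde a(1),1)$ contributes $(\widetilde c(1)/\widetilde a(1))^{n_0}$), so the displayed $A$ carries a typo in the exponents. You should say so explicitly rather than assert that your sum equals the stated $A$.
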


\begin{proof}
We begin by writing the following equality~:
     \begin{eqnarray*}
          2\left(h_{00}+H\left(\omega\right)+\widetilde{H}\left(\omega\right)\right)&=&
          h_{00}+H\left(\xi\left(\omega\right)\right)+
          \widetilde{H}\left(\xi\left(\omega\right)\right)+
          h_{00}+H\left(\eta\left(\omega\right)\right)+
          \widetilde{H}\left(\eta\left(\omega\right)\right)\\&+&
          H\left(\omega\right)-H\left(\eta\left(\omega\right)\right)+
          \widetilde{H}\left(\omega\right)-
          \widetilde{H}\left(\xi\left(\omega\right)\right),
     \end{eqnarray*}
obtained by using the invariance properties of $H$ and $\tilde{H}$
claimed in  Proposition~\ref{continuation_h_h_tilde_covering}. In
particular, if $\omega\in[0,\omega_{3}/2]\times [0,\omega_{1}/\imath]$,
then $\xi(\omega)$ and $\eta(\omega)$ belong to
$\omega\in[\omega_{3}/2,\omega_{2}]\times [0,\omega_{1}/\imath]$, so that
using once again Proposition~\ref{continuation_h_h_tilde_covering},
we obtain that $2(H(\omega)+\tilde{H}(\omega)+h_{00})$ is equal to~:
     \begin{equation}\label{equality_before_absorption}
          x\left(\xi\left(\omega\right)\right)^{n_{0}}y\left(\xi\left(\omega\right)\right)^{m_{0}}+
          x\left(\eta\left(\omega\right)\right)^{n_{0}}y\left(\eta\left(\omega\right)\right)^{m_{0}}+
          H\left(\omega\right)-H\left(\eta\left(\omega\right)\right)+
          \widetilde{H}\left(\omega\right)-\widetilde{H}\left(\xi\left(\omega\right)\right).
     \end{equation}
In particular, if we take $\omega$ lying over $(1,1)$, that belongs
to $[0,\omega_{3}/2]\times [0,\omega_{1}/\imath]$, as said in the proof
of Corollary~\ref{probability_being_absorbed}, and if we use that
for this $\omega$, $x(\eta(\omega))=c(1)/a(1)$,
$y(\xi(\omega))=\tilde{c}(1)/\tilde{a}(1)$, we obtain~:
     \begin{equation*}
          2\left(h_{00}+h\left(1\right)+\widetilde{h}\left(1\right)\right)=
          A+h\left(1\right)-h\left(\widetilde{c}\left(1\right)/\widetilde{a}\left(1\right)\right)+
          \widetilde{h}\left(1\right)-\widetilde{h}\left(c\left(1\right)/a\left(1\right)\right).
     \end{equation*}
Then, using that $c(1)/a(1)>0$ and $\tilde{c}(1)/\tilde{a}(1)>0$
(what implies that $h(\tilde{c}(1)/\tilde{a}(1))>0$ and
$\tilde{h}(c(1)/a(1))>0$) allows to get the lower bound, and using
that $c(1)/a(1)<1$ and $\tilde{c}(1)/\tilde{a}(1)<1$ (what is
equivalent to the positivity of the two drifts, in accordance with
our assumption (M)), added to the fact that $h$ and $\tilde{h}$ are
increasing, allows to get the upper bound.
\end{proof}

\section{Explicit form of the absorption probabilities generating functions }
\label{Explicit_form}

\subsection{Riemann boundary value problem with shift}
\label{Riemann_boundary_value_problem_with_shift}
Using the notations of Subsection~\ref{The_algebraic_curve_Q}, we define the
two following curves~:
     \begin{equation}\label{def_curves_L_M}
          \mathcal{L} = Y_{0}\left(\left[\overrightarrow{\underleftarrow{x_{1}
          ,x_{2}}}\right]\right),\hspace{5mm} \mathcal{M} =
          X_{0}\left(\left[\overrightarrow{\underleftarrow{y_{1}
          ,y_{2}}}\right]\right).
     \end{equation}
Just above, we use the notation of~\cite{FIM}~:
$\Big[\overrightarrow{\underleftarrow{u,v}}\Big]$ stands for the contour $[u,v]$
traversed from $u$ to $v$ along the upper edge of the slit $[u,v]$
and then back to $u$ along the lower edge of the slit.

In~\cite{FIM} is proved that the curves $\mathcal{L}$ and
$\mathcal{M}$ are quartics, symmetrical w.r.t.\ the real axis,
closed and simple, included in $\mathbb{C}\setminus
[y_{1},y_{2}]\cup [y_{3},y_{4}]$ and $\mathbb{C}\setminus
[x_{1},x_{2}]\cup [x_{3},x_{4}]$ respectively.

The reason why we have introduced these curves appears now~: the
functions $h$ and $\tilde{h}$, defined
in~(\ref{def_generating_functions}), verify the following boundary
conditions on $\mathcal{M}$ and $\mathcal{L}$~:
     \begin{eqnarray}
          \forall t \in \mathcal{M} :\hspace{5mm}
          h\left( t \right) - h\left( \overline{t} \right)&=&
          t^{n_{0}}Y_{0}\left(t\right)^{m_{0}}-\overline{t}^{n_{0}}
          Y_{0}\left(\overline{t}\right)^{m_{0}},\label{SR_problem_h}\\
          \forall t \in \mathcal{L}:\hspace{5mm}
          \widetilde{h}\left( t \right) - \widetilde{h}\left( \overline{t} \right)&=&
          X_{0}\left(t\right)^{n_{0}}t^{m_{0}}-
          X_{0}\left(\overline{t}\right)^{n_{0}}\overline{t}^{m_{0}}.\nonumber
     \end{eqnarray}
The way to obtain these boundary conditions is described
in~\cite{FIM}, so we refer to this book for the details. The
function $h$, as a generating function of probabilities, is well
defined on the closed unit disc and with
Corollary~\ref{continuation_h_h_tilde} is continuable into a
holomorphic function on $\mathbb{C}\setminus [x_{3},x_{4}]$, domain
that contains the bounded domain delimited by $\mathcal{M}$. Now we
have the problem {\it to find $h$ holomorphic inside $\mathcal{M}$,
continuous up to the boundary $\mathcal{M}$ and verifying the boundary
condition~(\ref{SR_problem_h}). In addition $h(0)=0$.}

Problems with boundary conditions like~(\ref{SR_problem_h}) are
called Riemann boundary value problems with shift. The classical way
to study this kind of problems is to reduce them to Riemann-Hilbert
problems, for which there exists a suitable and complete theory. The
conversion between Riemann problems with shift and Riemann-Hilbert
problems is done thanks to the use of conformal gluing functions,
notion defined just above. For details about boundary value
problems, we refer to~\cite{LU}.

\begin{defn}\label{def_CGF}
     Let $\mathcal{C}$ be a simple closed curve, symmetrical
     w.r.t.\ the real axis.
     Denote by $\mathcal{G}_{\mathcal{C}}$
     the interior of the bounded domain delimited by $\mathcal{C}$.
     $w$ is called a conformal gluing function (CGF) for
     the curve $\mathcal{C}$ if {\rm (i)} $w$ is meromorphic in
     $\mathcal{G}_{\mathcal{C}}$, continuous up
     to its boundary {\rm (ii)} $w$
     establishes a conformal mapping of
     $\mathcal{G}_{\mathcal{C}}$ onto the complex plane
     cut along a smooth arc $U$ {\rm (iii)}
     for all $t\in \mathcal{C}$,~$w(t)=w(\overline{t})$.
     \end{defn}

For the walks such that $p_{10}+p_{-10}+p_{01}+p_{0-1}=1$, that we
have studied in~\cite{Ras}, we easily see that $\mathcal{L}$ and
$\mathcal{M}$ are simply the circles
$\mathcal{C}(0,(p_{0-1}/p_{01})^{1/2})$ and
$\mathcal{C}(0,(p_{-10}/p_{10})^{1/2})$ and the functions
$p_{01}t+p_{0-1}/t$ and $p_{10}t+p_{-10}/t$ are proper CGF.


In the general case, it is very pleasant to notice that we still
have the existence and even the explicit expression of suitable CGF
for the curves $\mathcal{L}$ and $\mathcal{M}$. The following result
is  due to~\cite{FIM}. Define for $t\in \mathbb{C}$
     \begin{equation}\label{def_CGF_general_case}
          w(t)=\wp_{1,3}\left( -\frac{\omega_{1}+\omega_{2}}{2}+
          x^{-1}\left(t\right)\right),
     \end{equation}
where the $\omega_{i}$, $i=1,2,3$ are defined in~(\ref{omega123}),
$\wp_{1,3}$ is the classical Weierstrass function associated to the
periods $\omega_{1}$ and $\omega_{3}$, $x^{-1}$ is the reciprocal
function of the uniformization built in
Subsection~\ref{Galois_automorphisms}, we recall that it was
$x(\omega)= x_{4}+d'(x_{4})/(\wp_{1,2}(\omega)-d''(x_{4})/6)$ if
$x_{4}\neq \infty$ and
$x(\omega)=(6\wp_{1,2}(\omega)-d''(0))/d'''(0)$ if $x_{4}=\infty$,
$\wp_{1,2}$ being the Weierstrass function with periods
$\omega_{1}$ and $\omega_{2}$.

Then $w$ is single-valued and meromorphic on
$\mathcal{G}_{\mathcal{M}}$, continuous up to its boundary and
establishes a conformal mapping of the domain
$\mathcal{G}_{\mathcal{M}}$ onto the domain $\mathbb{C}\setminus U$,
where $U=[w(X(y_{1})),w(X(y_{2}))]$. Moreover, on
$\mathcal{G}_{\mathcal{M}}$, $w$ has one pole of order one, it is at $x_{2}$.

\begin{prop}\label{w}
    Let $\mathcal{M}$ be the curve defined in~(\ref{def_curves_L_M}). $w$,
    defined in~(\ref{def_CGF_general_case}),
    is a CGF for $\mathcal{M}$.
\end{prop}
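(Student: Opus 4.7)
The proposition asserts that $w$ defined in~(\ref{def_CGF_general_case}) satisfies the three clauses of Definition~\ref{def_CGF}. Conditions (i) (meromorphy on $\mathcal{G}_{\mathcal{M}}$ with a single simple pole at $x_{2}$, continuous up to $\mathcal{M}$) and (ii) (conformal equivalence of $\mathcal{G}_{\mathcal{M}}$ with the cut plane $\mathbb{C}\setminus U$, where $U=[w(X(y_{1})),w(X(y_{2}))]$) are recorded in the paragraph preceding the statement as consequences of~\cite{FIM}, so I would invoke them directly. The real content of the proof is therefore the boundary symmetry~(iii), namely $w(t)=w(\overline{t})$ for every $t\in\mathcal{M}$.

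To establish (iii) I would lift the problem to the uniformization torus $\mathbb{C}/\Gamma$ of Subsection~\ref{Galois_automorphisms}. Fix $t\in\mathcal{M}$ and let $\omega=x^{-1}(t)$ for the branch used to define $w$. By the very definition $\mathcal{M}=X_{0}([\overrightarrow{\underleftarrow{y_{1},y_{2}}}])$, the coordinate $y(\omega)$ is real and lies in $[y_{1},y_{2}]$, and the two solutions in $X$ of $\widetilde{a}(y(\omega))X^{2}+\widetilde{b}(y(\omega))X+\widetilde{c}(y(\omega))=0$ are exactly $t$ and $\overline{t}$. By~(\ref{def_xi_eta_Q}), passing from one root to the other is nothing but the Galois automorphism $\eta$, which on the uniformization reads $\eta(\omega)=-\omega+\omega_{3}$ by~(\ref{xi_eta_covering}). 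Hence $x(-\omega+\omega_{3})=\overline{t}$, and using the deck transformation $\omega'\mapsto-\omega'$ of $x$, the other $x$-preimage of $\overline{t}$ is $\omega-\omega_{3}$. Choosing this preimage as $x^{-1}(\overline{t})$ (in accordance with the branch fixing $\omega=x^{-1}(t)$), the $\omega_{3}$-periodicity of $\wp_{1,3}$ yields
\[
  w(\overline{t})=\wp_{1,3}\!\left(-\tfrac{\omega_{1}+\omega_{2}}{2}+\omega-\omega_{3}\right)=\wp_{1,3}\!\left(-\tfrac{\omega_{1}+\omega_{2}}{2}+\omega\right)=w(t),
\]
which is exactly~(iii).

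The main obstacle in this outline is precisely the branch choice for $x^{-1}(\overline{t})$: between the two candidates $\omega-\omega_{3}$ and $-\omega+\omega_{3}$, only the former gives the $\omega_{3}$-periodicity cancellation, whereas the latter would require $\omega_{1}+\omega_{2}$ to be a period of $\wp_{1,3}$, which is not the case in general (since $\omega_{2}\in\omega_{1}\mathbb{Z}+\omega_{3}\mathbb{Z}$ fails unless $\omega_{2}/\omega_{3}\in\mathbb{N}$, i.e.\ in the finite-group setting of Subsection~\ref{Finite_groups_and_s_s_functions}). Pinning down the correct preimage amounts to identifying the region $x^{-1}(\mathcal{G}_{\mathcal{M}})$ in the fundamental parallelogram (Figure~\ref{Locations_of_the_cuts}) and checking that the lift of $t\mapsto\overline{t}$, restricted to this region, coincides with the translation $\omega\mapsto\omega-\omega_{3}$ on $\tilde{\mathcal{M}}=x^{-1}(\mathcal{M})$ rather than with the involution $\omega\mapsto-\omega+\omega_{3}$. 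Once this is verified --- by a direct inspection of the cuts on the Riemann surface and the formulas~(\ref{omega123})~(\ref{xi_eta_covering}) --- (iii) follows as above and the proposition is established.
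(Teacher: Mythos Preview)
The paper does not give its own proof of this proposition; the sentence immediately following the statement records that Proposition~\ref{w} and the properties listed before it are proved in~\cite{FIM}. So there is no detailed argument in the paper to compare against, and your attempt to supply a direct verification of clause~(iii) via the uniformization goes beyond what the paper does.

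Your strategy is the right one, but the identification of the branch value $x^{-1}(\overline{t})$ is incorrect. The branch of $x^{-1}$ used in~(\ref{def_CGF_general_case}) sends $\mathcal{G}_{\mathcal{M}}$ onto the rectangle $]\omega_{2}/2,(\omega_{2}+\omega_{3})/2[\times]0,\omega_{1}/\imath[$ and $\mathcal{M}$ onto the vertical segment at real part $(\omega_{2}+\omega_{3})/2$ (this is spelled out in the proof of Proposition~\ref{properties_w}). Your two candidates $\omega-\omega_{3}$ and $-\omega+\omega_{3}$ have real parts $(\omega_{2}-\omega_{3})/2$ and $(\omega_{3}-\omega_{2})/2$; neither lies on that segment. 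It is true that $-\omega+\omega_{3}$ is the right point \emph{modulo $\Gamma$}, but since $\wp_{1,3}$ is not $\omega_{2}$-periodic the representative matters, and the one in the branch's image is $-\omega+\omega_{2}+\omega_{3}$ (up to an $\omega_{1}$-shift). With this correct representative the computation becomes
\[
  w(\overline{t})
  =\wp_{1,3}\Big(\tfrac{\omega_{2}-\omega_{1}}{2}-\omega+\omega_{3}\Big)
  =\wp_{1,3}\Big(\tfrac{\omega_{2}-\omega_{1}}{2}-\omega\Big)
  =\wp_{1,3}\Big(\tfrac{\omega_{1}-\omega_{2}}{2}+\omega\Big)
  =\wp_{1,3}\Big(-\tfrac{\omega_{1}+\omega_{2}}{2}+\omega\Big)=w(t),
\]
using successively $\omega_{3}$-periodicity, evenness, and $\omega_{1}$-periodicity of $\wp_{1,3}$. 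Thus your outline can be completed, but the dichotomy you set up between $\omega-\omega_{3}$ and $-\omega+\omega_{3}$ is a false one, and the ``direct inspection'' you defer to would in fact contradict the candidate you selected.
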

Proposition~\ref{w} and the different properties mentioned above it
are proved  in~\cite{FIM}. Then, following Subsection~5.4 of this
book --~though making use of the index lightly different~--, we
obtain the following integral representation of the function $h$.

\begin{prop}\label{explicit_h(x)_FIM}
     Let $\mathcal{M}$ be the curve defined in~(\ref{def_curves_L_M})
     and $\mathcal{M}_{u}=\mathcal{M}\cap \{t\in \mathbb{C} : \textnormal{Im}(t)\geq 0\}$.
     The function $h$ admits in $\mathcal{G}_{\mathcal{M}}$
     the following integral representation, the function $w$ being
     defined in~(\ref{def_CGF_general_case})~:
          \begin{equation*}
               h\left(x\right) = \frac{1}{2\pi \imath}
               \int_{\mathcal{M}_{u}}
               \left(t^{n_{0}}Y_{0}\left(t\right)^{m_{0}}-
               \overline{t}^{n_{0}}Y_{0}\left(\overline{t}\right)^{m_{0}}\right)
               \left(\frac{w'\left(t\right)}
               {w\left(t\right)-w\left(x\right)}-\frac{w'\left(t\right)}
               {w\left(t\right)-w\left(0\right)}\right)\textnormal{d}t.
          \end{equation*}
     \end{prop}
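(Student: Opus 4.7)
The idea is to convert the Riemann boundary value problem with shift~(\ref{SR_problem_h}) into a standard Riemann--Hilbert problem on the slit $U = [w(X(y_{1})),w(X(y_{2}))]$ via the CGF $w$ of Proposition~\ref{w}, solve the latter by Plemelj--Sokhotski, and then pull the formula back to the $x$-plane.

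\textbf{Step 1: Lift through $w$.} Since $w$ establishes a conformal bijection $\mathcal{G}_{\mathcal{M}} \to \mathbb{C}\setminus U$ (with its unique pole at $x_{2}$ being sent to $\infty$), and since $h$ is holomorphic on $\mathcal{G}_{\mathcal{M}} \subset \mathbb{C}\setminus[x_{3},x_{4}]$ by Corollary~\ref{continuation_h_h_tilde}, the function $H(z) := h(w^{-1}(z))$ is well defined and holomorphic on $\mathbb{C}\setminus U$, with a finite limit $h(x_{2})$ at infinity. Every $s \in U$ has two preimages $t(s) \in \mathcal{M}_{u}$ and $\overline{t(s)} \in \mathcal{M}\setminus\mathcal{M}_{u}$, identified by the CGF property $w(t)=w(\overline{t})$; approaching $s$ from above (resp.\ below) inside $\mathbb{C}\setminus U$ corresponds to $w^{-1}(z)\to t(s)$ (resp.\ $\to\overline{t(s)}$). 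Hence the boundary condition~(\ref{SR_problem_h}) translates into the jump
\begin{equation*}
  H_{+}(s) - H_{-}(s) \;=\; g(s) \;:=\; t(s)^{n_{0}}Y_{0}(t(s))^{m_{0}} - \overline{t(s)}^{\,n_{0}}Y_{0}(\overline{t(s)})^{m_{0}}, \qquad s \in U.
\end{equation*}

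\textbf{Step 2: Solve the Riemann--Hilbert problem.} By Plemelj--Sokhotski, the Cauchy-type integral $H_{0}(z) = \frac{1}{2\pi\imath}\int_{U}\frac{g(s)}{s-z}\,\mathrm{d}s$ is holomorphic on $\mathbb{C}\setminus U$, vanishes at $\infty$, and has jump exactly $g$ across $U$. The general holomorphic solution on $\mathbb{C}\setminus U$ with that jump and bounded at infinity is therefore $H_{0}+C$ for a constant $C$. Uniqueness of this representation in the class of functions holomorphic up to (and including) the endpoints of $U$ is guaranteed by the fact that at the endpoints of $\mathcal{M}_{u}$ one has $t=\overline{t}$, so $g$ vanishes there, ruling out the classical square-root ``index-one'' homogeneous solutions; this is the precise point where our convention of index differs (slightly) from that used in~\cite{FIM}. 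The normalization $h(0)=0$ fixes $C = -H_{0}(w(0))$, giving
\begin{equation*}
  H(z) \;=\; \frac{1}{2\pi\imath}\int_{U} g(s)\!\left[\frac{1}{s-z} - \frac{1}{s-w(0)}\right]\mathrm{d}s.
\end{equation*}

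\textbf{Step 3: Change of variables back to $\mathcal{M}_{u}$.} Parametrize $U$ by $s = w(t)$, $t\in\mathcal{M}_{u}$, so that $\mathrm{d}s = w'(t)\,\mathrm{d}t$ and $g(w(t)) = t^{n_{0}}Y_{0}(t)^{m_{0}} - \overline{t}^{\,n_{0}}Y_{0}(\overline{t})^{m_{0}}$. Taking $z = w(x)$ for $x\in\mathcal{G}_{\mathcal{M}}$ yields exactly the integral representation claimed in Proposition~\ref{explicit_h(x)_FIM}.

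\textbf{Main obstacle.} The technical difficulty is \emph{not} the Plemelj formula itself but the justification that a single additive constant suffices: one must rule out extra solutions carrying square-root singularities at the endpoints of $U$. This is the content of the endpoint analysis above, hinging on the fact that $t$ and $\overline{t}$ coalesce at the endpoints of $\mathcal{M}_{u}$, and on the holomorphy of $h$ in a neighborhood of the whole closure $\overline{\mathcal{G}_{\mathcal{M}}}$, which in turn relies on Corollary~\ref{continuation_h_h_tilde}. The remainder of the argument is the routine bookkeeping that the book~\cite{FIM} covers in detail in Subsection~5.4.
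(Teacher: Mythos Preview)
Your proposal is correct and follows essentially the same approach as the paper, which does not give its own proof but simply refers the reader to Subsection~5.4 of~\cite{FIM} ``though making use of the index lightly different''. Your three-step outline (lift via the CGF to a scalar Riemann--Hilbert problem on the slit $U$, solve by Plemelj--Sokhotski with an additive constant, pull back by the change of variable $s=w(t)$) is exactly the method of~\cite{FIM}, and your endpoint analysis identifying why only a constant (and not a square-root homogeneous solution) appears is precisely the ``index'' remark the paper alludes to.
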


\subsection{Integral representation of the generating functions}
\label{Riemann_Hilbert_problem}

In this subsection, we simplify the integral
representation of $h$ obtained in Proposition~\ref{explicit_h(x)_FIM}
for several reasons~: 
indeed, this formulation does not
highlight the singularities of $h$ and hardly allows to obtain the
asymptotic of the absorption probabilities, furthermore it makes
appear $h$ asymmetrically as an integral on $\mathcal{M}_{u}$.
Before stating, in
Theorem~\ref{explicit_h(x)_second}, the final result, we give the
definition~:
     \begin{equation}\label{def_mu}
          \mu_{m_{0}}\left(t\right) = \frac{1}{\left(2 a\left(t\right)\right)^{m_{0}}}
          \sum_{k=0}^{\left\lfloor \left(m_{0}-1\right)/2
          \right\rfloor}{\genfrac(){0cm}{0}{m_{0}}{2k+1}  }
          d\left(t\right)^{k} \left( - b\left(t\right) \right)^{m_{0}-\left(2k+1\right)}.
     \end{equation}
The function $\mu_{m_{0}}$ is such that for all $t\downarrow [x_{1},x_{2}]$
(resp.\ $t\uparrow [x_{1},x_{2}]$),$Y_{0}(t)^{m_{0}}-
\overline{Y_{0}(t)}{}^{m_{0}}$ is equal to
$- 2\imath (-d(t))^{1/2} \mu_{m_{0}}(t)$ (resp.\ $2 \imath(-d(t))^{1/2} \mu_{m_{0}}(t)$).

     \begin{thm}\label{explicit_h(x)_second}
     The function $h$ admits, on $\mathcal{G}_{\mathcal{M}}$,
     the following integral representation~:
          \begin{equation}\label{new_integral_form_h}
               h\left(x\right) = x^{n_{0}}Y_{0}\left(x\right)^{m_{0}}
               +\frac{1}{\pi }\int_{x_{1}}^{x_{2}}
               t^{n_{0}}\mu_{m_{0}}\left(t\right)
               \left(\frac{w'\left(t\right)}{w\left(t\right)-w\left(x\right)}-
               \frac{w'\left(t\right)}{w\left(t\right)-w\left(0\right)}\right)
               \sqrt{-d\left(t\right)}\textnormal{d}t.
          \end{equation}
     where the function $w$ is defined in~(\ref{def_CGF_general_case}),
     $\mu_{m_{0}}$ in~(\ref{def_mu}).
     \end{thm}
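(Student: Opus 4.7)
The plan is to start from the integral representation of Proposition~\ref{explicit_h(x)_FIM}, reorganize it as a closed contour integral over $\mathcal{M}$, and then deform this contour inward through $\mathcal{G}_{\mathcal{M}}$, so that only a loop around the branch cut $[x_{1},x_{2}]$ of $Y_{0}$ and a few explicit residues survive. Write $F(t)=t^{n_{0}}Y_{0}(t)^{m_{0}}$ and $K(t)=w'(t)/(w(t)-w(x))-w'(t)/(w(t)-w(0))$, so Proposition~\ref{explicit_h(x)_FIM} reads $h(x)=\frac{1}{2\pi\imath}\int_{\mathcal{M}_{u}}[F(t)-F(\overline{t})]K(t)\,\textnormal{d}t$.

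The main obstacle, and the step I would treat first, is the symmetrization that turns this half-contour integral into the full closed-contour integral $\frac{1}{2\pi\imath}\oint_{\mathcal{M}}F(t)K(t)\,\textnormal{d}t$. Property~(iii) of the conformal gluing function gives $w(t)=w(\overline{t})$ along $\mathcal{M}$; differentiating tangentially yields $w'(t)\,\textnormal{d}t=w'(\overline{t})\,\textnormal{d}\overline{t}$, and substituting into the definition of $K$ gives $K(t)\,\textnormal{d}t=K(\overline{t})\,\textnormal{d}\overline{t}$ pointwise on $\mathcal{M}$. Combined with the substitution $s=\overline{t}$ and careful orientation bookkeeping (the counterclockwise $\mathcal{M}$ consists of $\mathcal{M}_{u}$ traversed from its right real endpoint to its left, followed by $\mathcal{M}_{l}$ traversed back), this recasts the conjugate term $\int_{\mathcal{M}_{u}}F(\overline{t})K(t)\,\textnormal{d}t$ exactly as minus the $\mathcal{M}_{l}$-piece of the closed contour, which is what the reformulation requires.

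With the closed-contour form at hand, deforming $\mathcal{M}$ inward through $\mathcal{G}_{\mathcal{M}}$ produces three sources of contribution. First, $K(t)$ has a simple pole at $t=x$ with residue $1$, yielding the residue $F(x)=x^{n_{0}}Y_{0}(x)^{m_{0}}$. Second, $K(t)$ has a simple pole at $t=0$ with residue $-1$, but since $n_{0}\geq 1$ one has $F(0)=0$ and this contribution vanishes. Third, an elementary local expansion near $t=x_{2}$ shows that the two Cauchy-type kernels in $K$ share the singular part $-1/(t-x_{2})$ coming from the simple pole of $w$, so these cancel in the difference and $K$ extends holomorphically across $x_{2}$. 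The only remaining singularity inside $\mathcal{G}_{\mathcal{M}}$ is therefore the branch cut $[x_{1},x_{2}]$ of $Y_{0}$.

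It then remains to evaluate the loop around $[x_{1},x_{2}]$. By the very definition~\eqref{def_mu} of $\mu_{m_{0}}$, the jump $Y_{0}(t+\imath 0)^{m_{0}}-Y_{0}(t-\imath 0)^{m_{0}}$ across the cut equals $-2\imath\sqrt{-d(t)}\,\mu_{m_{0}}(t)$. A counterclockwise loop around $[x_{1},x_{2}]$ thus contributes $2\imath\int_{x_{1}}^{x_{2}}t^{n_{0}}\mu_{m_{0}}(t)K(t)\sqrt{-d(t)}\,\textnormal{d}t$; dividing by $2\pi\imath$ produces the $1/\pi$ factor and, together with the residue $x^{n_{0}}Y_{0}(x)^{m_{0}}$ at $t=x$, yields exactly the representation~\eqref{new_integral_form_h}.
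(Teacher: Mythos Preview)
Your proof is correct and follows essentially the same route as the paper: symmetrize the half-contour integral of Proposition~\ref{explicit_h(x)_FIM} into a closed integral over $\mathcal{M}$ via $w(t)=w(\overline{t})$, then deform inward so that only the residue at $t=x$ and the jump of $Y_{0}^{m_{0}}$ across $[x_{1},x_{2}]$ survive. The only organisational difference is that the paper packages your pole analysis by introducing the auxiliary function $\phi(t,x)=K(t)-x/(t(t-x))$ and observing once and for all that it is holomorphic on $\mathcal{G}_{\mathcal{M}}^{2}$ (this simultaneously encodes your cancellation at $x_{2}$ and the residues at $t=x,0$), then applies Cauchy's theorem separately to the $\phi$-part and to the elementary Cauchy kernel; your direct residue computation achieves the same outcome.
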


The function $h$ appears in~(\ref{new_integral_form_h})
at the first sight as the sum of a
function holomorphic on $\mathbb{C}\setminus [x_{1},x_{2}]\cup
[x_{3},x_{4}]$ and an other function holomorphic on
$\mathbb{C}\setminus \{w^{-1}(w([x_{1},x_{2}]))\cup
[x_{3},x_{4}]\}$, set that is included in $\mathbb{C}\setminus
[x_{1},x_{2}]\cup [x_{3},x_{4}]$.
We will later split this representation into two 
terms~(\ref{residue_term}) and (\ref{int_phi}), where
the first one is holomorphic in  $[x_1,x_2]$ by
Lemma~\ref{lemma_residue_term} and the second one 
is also holomorphic in $[x_{1},x_{2}]$ by~(\ref{equality_l1})
and~(\ref{before_def_f_R}),
so that this  representation gives in fact that $h$ is holomorphic in the
neighborhood of $[x_{1},x_{2}]$, what we already knew, since
Lemma~\ref{lemma_branched_points} implies that $x_{1}$ and $x_{2}$
lye in the unit disc.

\begin{proof}
We start by expressing the integral obtained in
Proposition~\ref{explicit_h(x)_FIM} as an integral on a closed
contour, namely $\mathcal{M}$. Making the change of variable
$t\mapsto \overline{t}$ and using that on $\mathcal{M}$,
$w(t)=w(\overline{t})$, we obtain
     \begin{equation}\label{explicit_h(x)_secondly}
               h\left(x\right) = \frac{1}{2\pi \imath}
               \int_{\mathcal{M}}
               t^{n_{0}}Y_{0}\left(t\right)^{m_{0}}
               \left(\frac{w'\left(t\right)}
               {w\left(t\right)-w\left(x\right)}-\frac{w'\left(t\right)}
               {w\left(t\right)-w\left(0\right)}\right) \textnormal{d}t.
     \end{equation}

Then we transform~(\ref{explicit_h(x)_secondly}) into an integral on
the cut $[x_{1},x_{2}]$. To do this, start by remarking that the
function of two variables $(t,x)\mapsto
w'(t)/(w(t)-w(x))-(x_{2}-x)/((x_{2}-t)(t-x))$ is continuable into a
holomorphic function in ${\mathcal{G}_{\mathcal{M}}}^{2}$. This
property comes from the fact that $w$ is one to one in
$\mathcal{G}_{\mathcal{M}}$ and has a pole of order one at $x_{2}$.
In particular, the function $\phi$, initially well defined on
${\mathcal{G}_{\mathcal{M}}}^{2}\setminus \{(y,y) : y\in
\mathcal{G}_{\mathcal{M}}\}$ by
     \begin{equation}\label{def_phi}
          \phi\left(t,x\right)= \frac{w'\left(t\right)}
          {w\left(t\right)-w\left(x\right)}-
          \frac{w'\left(t\right)}{w\left(t\right)-w\left(0\right)}-
          \frac{x}{t\left(t-x\right)},
     \end{equation}
is continuable into a holomorphic function in
${\mathcal{G}_{\mathcal{M}}}^{2}$, again denoted by $\phi$.

Consider now the contour $\mathcal{H}_{\epsilon }=
\mathcal{M}_{\epsilon} \cup \mathcal{S}^{1}_{\epsilon } \cup
\mathcal{S}^{2}_{\epsilon } \cup \mathcal{C}^{1}_{\epsilon } \cup
\mathcal{C}^{2}_{\epsilon } \cup \mathcal{D}^{1}_{\epsilon } \cup
\mathcal{D}^{2}_{\epsilon }$, represented in Figure~\ref{pacman}.
\begin{figure}[!ht]
\begin{center}
\begin{picture}(250.00,120.00)
\includegraphics{contour_integration.eps}
\end{picture}
\end{center}
\caption{Contour of integration} 
\label{pacman}
\end{figure}
\newline A consequence of the holomorphy of $\phi$ in ${\mathcal{G}_{\mathcal{M}}}^{2}$
is that for all $x\in \mathcal{G}_{\mathcal{M}}$~:
     \begin{equation*}
          \int_{\mathcal{H}_{\epsilon}} t^{n_{0}}Y_{0}
          \left(t\right)^{m_{0}}\phi\left(t,x\right)\text{d}t=0.
     \end{equation*}
In particular, letting $\epsilon \to 0$ and using the definition of
$\mu_{m_{0}}$ given in~(\ref{def_mu}), we obtain that~:
     \begin{equation}\label{several_item_1}
          \frac{1}{2\pi \imath}\int_{\mathcal{M}}t^{n_{0}}Y_{0}
          \left(t\right)^{m_{0}}\phi\left(t,x\right)\text{d}t=
          \frac{1}{\pi }\int_{x_{1}}^{x_{2}}t^{n_{0}}\mu_{m_{0}}
          \left(t\right)\phi\left(t,x\right)\text{d}t.
     \end{equation}
Furthermore, the residue theorem implies that for all $x$ in the
bounded domain delimited by $\mathcal{H}_{\epsilon}$~:
     \begin{equation*}
          \frac{1}{2\pi \imath}\int_{\mathcal{H}_{\epsilon}}\frac{t^{n_{0}-1}Y_{0}
          \left(t\right)^{m_{0}}}{t-x}\text{d}t=x^{n_{0}-1}Y_{0}\left(x\right)^{m_{0}}.
     \end{equation*}
So, letting $\epsilon \to 0$ and using the definition of
$\mu_{m_{0}}$ yield~:
     \begin{equation}\label{several_item_2}
          \frac{1}{2\pi \imath}\int_{\mathcal{M}}\frac{t^{n_{0}-1}Y_{0}
          \left(t\right)^{m_{0}}}{t-x}\text{d}t=x^{n_{0}-1}Y_{0}\left(x\right)^{m_{0}}+
          \frac{1}{\pi }\int_{x_{1}}^{x_{2}}\frac{t^{n_{0}-1}\mu_{m_{0}}
          \left(t\right)}{t-x}\text{d}t.
     \end{equation}
Note that to obtain~(\ref{several_item_1})
and~(\ref{several_item_2}) we have used that the integral on
$\mathcal{S}_{\epsilon}^{1}\cup \mathcal{S}_{\epsilon}^{2}$ of a
function holomorphic in the neighborhood of
$\mathcal{S}_{\epsilon}^{1}\cup \mathcal{S}_{\epsilon}^{2}$ goes to
zero with $\epsilon$, since the two contours
$\mathcal{S}_{\epsilon}^{1}$ and $\mathcal{S}_{\epsilon}^{2}$ get
closer of the same contour but covered in the two opposite
directions. For $\mathcal{C}_{\epsilon}^{1}$ and
$\mathcal{C}_{\epsilon}^{2}$, we have used that the integral of a
function integrable goes to zero as the length of the contour goes
to zero. Finally, Theorem~\ref{explicit_h(x)_second} follows
from~(\ref{def_phi}),~(\ref{several_item_1})
and~(\ref{several_item_2}).
\end{proof}

In the particular case $\Delta=0$ and $x_{4}>0$, we can quite
simplify the integral representation found in
Theorem~\ref{explicit_h(x)_second}. We have the following
result, already mentioned but not proved in~\cite{Ras}~:

\begin{prop}\label{explicit_Delta_zero}
Suppose that $\Delta=0$ and that $x_{4}>0$. Then we have the equality~:
     \begin{equation}\label{explicit_h(x)_Delta=zero}
          h\left(x\right) = \frac{x}{\pi}\int_{x_{3}}^{x_{4}}\left(t^{n_{0}}-
          \sigma\left(t\right)^{n_{0}}\right)
          \frac{\mu_{m_{0}}\left(t\right)}{t\left(t-x\right)}
          \sqrt{-d\left(t\right)}\textnormal{d}t+
          xP_{\infty}\left(x\mapsto
          x^{n_{0}-1}Y_{0}\left(x\right)^{m_{0}}\right)\left(x\right),
     \end{equation}
where if $f$ is a function meromorphic at
infinity, $P_{\infty}(f)$ denotes the principal part at infinity of $f$,
that is to say the polynomial part 
of the Laurent expansion at infinity of $f$,
$\sigma(t)=(l_{1}+l_{2})/2+
((l_{2}-l_{1})/2)^{2}/(t-(l_{1}+l_{2})/2)$, $l_{1}<0$ and $l_{2}>0$
being the roots of the second degree polynomial $b(x) a'(x)-b'(x)a(x)$.
In particular, the probabilities of absorption are in
this case easily  made explicit, by expanding
in~(\ref{explicit_h(x)_Delta=zero}) $1/(t-x)$ according to the
powers of $x$.
\end{prop}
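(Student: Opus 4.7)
The plan is to start from the integral representation of Theorem~\ref{explicit_h(x)_second} and exploit the drastic simplification that the conformal gluing function $w$ undergoes when $\Delta=0$. By Subsection~\ref{Finite_groups_and_s_s_functions} this hypothesis is equivalent to $\mathcal{H}$ being of order four, hence to $\omega_2/\omega_3=2$. Under this degeneration, the doubling formula relating $\wp_{1,3}$ to $\wp_{1,2}$ reduces the definition~(\ref{def_CGF_general_case}) to a rational function of $x$; a direct computation identifies it, up to an affine map on the image, with
\[
w(t) = t + \frac{r^{2}}{t-c}, \qquad c = \frac{l_{1}+l_{2}}{2}, \quad r = \frac{l_{2}-l_{1}}{2},
\]
whose critical points are exactly $l_{1},l_{2}$, so that $w(t)=w(\sigma(t))$ for the Möbius involution $\sigma$ of the statement.

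From this explicit form one obtains
\[
w(t)-w(x) = \frac{(t-x)(t-\sigma(x))}{t-c}, \qquad \frac{w'(t)}{w(t)-w(x)} = \frac{1}{t-x}+\frac{1}{t-\sigma(x)}-\frac{1}{t-c}.
\]
Subtracting the analogue at $x=0$ the $1/(t-c)$ terms cancel, and the elementary identity $1/(t-x)-1/t=x/(t(t-x))$ rewrites the integrand of~(\ref{new_integral_form_h}) as the sum of a piece with kernel $x/(t(t-x))$ and a piece with kernel $1/(t-\sigma(x))-1/(t-\sigma(0))$, both integrated against $t^{n_{0}}\mu_{m_{0}}(t)\sqrt{-d(t)}$ over $[x_{1},x_{2}]$.

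I would then apply the change of variable $u=\sigma(t)$ to the second piece. Under $\Delta=0$ the fixed points $l_{1},l_{2}$ of $\sigma$ separate the cuts of $d$ (in the ordering $x_{1}<x_{2}<l_{1}<l_{2}<x_{3}<x_{4}$, a consequence of the rational form of $w$), so $\sigma$ maps $[x_{1},x_{2}]$ bijectively onto $[x_{3},x_{4}]$; moreover, as $\sigma$ lifts to a Galois automorphism of the Riemann surface, a direct computation shows that $\mu_{m_{0}}(\sigma(u))\sqrt{-d(\sigma(u))}\,|\sigma'(u)|$ matches $\mu_{m_{0}}(u)\sqrt{-d(u)}$ up to a rational factor that is absorbed by the kernel. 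After the change of variable both integrals lie on $[x_{3},x_{4}]$ and combine to give the factor $(t^{n_{0}}-\sigma(t)^{n_{0}})$ in the integrand. Finally, $x^{n_{0}}Y_{0}(x)^{m_{0}}$ is split via its Laurent expansion at infinity into the polynomial part $xP_{\infty}(x^{n_{0}-1}Y_{0}(x)^{m_{0}})$ and a regular-at-infinity remainder, and this remainder cancels precisely the residue-at-infinity contributions generated by the change of variable.

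The main obstacle is the bookkeeping at infinity: the involution $u=\sigma(t)$ sends $t=c$ to $u=\infty$ and vice versa, so residues at infinity appear in the second integral, and one must verify that their total contribution equals $xP_{\infty}(x^{n_{0}-1}Y_{0}(x)^{m_{0}})(x)$ minus the non-polynomial part of $x^{n_{0}}Y_{0}(x)^{m_{0}}$. The hypothesis $x_{4}>0$ is needed precisely here, because it ensures $\deg d=4$ and hence the existence of a finite $\lim_{t\to\infty}Y_{0}(t)$, without which the Laurent expansion and its polynomial part would not have the required form. The identification of the explicit rational $w$ and the verification $\sigma([x_{1},x_{2}])=[x_{3},x_{4}]$ both reduce to algebra once the group-order-four normalization $\omega_{3}=\omega_{2}/2$ is in hand.
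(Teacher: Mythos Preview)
Your overall route matches the paper's: exploit the explicit CGF available when $\Delta=0$, split the kernel of Theorem~\ref{explicit_h(x)_second} into two pieces, and transport one of them to $[x_3,x_4]$ via the involution $\sigma$. The partial-fraction identity $w'(t)/(w(t)-w(x))=1/(t-x)+1/(t-\sigma(x))-1/(t-c)$ is correct and is the heart of the computation. But two steps are wrong.

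First, the ordering $x_1<x_2<l_1<l_2<x_3<x_4$ is impossible: $l_1<0$ by hypothesis while $x_2>0$ by Lemma~\ref{lemma_branched_points}. The circle $\mathcal{M}$ meets the real axis at $l_1,l_2$ and contains $[x_1,x_2]$ in its interior, so the correct picture is $l_1<\gamma<x_1<x_2<l_2<x_3<x_4$. What actually gives $\sigma(x_1)=x_4$ and $\sigma(x_2)=x_3$ is the relation $(x_i-\gamma)(x_{5-i}-\gamma)=\rho^2$, i.e.\ Lemma~\ref{x1_x2_x3_x4_Delta_0}; this is not ``a consequence of the rational form of $w$'' alone.

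Second, and this is the real gap, your ``bookkeeping at infinity'' paragraph misidentifies the mechanism producing $P_\infty$. Since $\gamma\notin[x_1,x_2]$, the substitution $u=\sigma(t)$ carries $[x_1,x_2]$ to $[x_3,x_4]$ \emph{without passing through~$\infty$}; using $P(\sigma(u))=(\rho/(u-\gamma))^2P(u)$ for $P\in\{a,b,c\}$ one gets $\mu_{m_0}(\sigma(u))\sqrt{-d(\sigma(u))}=\mu_{m_0}(u)\sqrt{-d(u)}$ exactly, and the second piece becomes $-\tfrac{x}{\pi}\int_{x_3}^{x_4}\sigma(u)^{n_0}\mu_{m_0}(u)\sqrt{-d(u)}\,du/(u(u-x))$ with no residual terms. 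The polynomial $xP_\infty(\cdot)$ arises instead from the \emph{first} piece: that piece together with $x^{n_0}Y_0(x)^{m_0}$ is exactly $h_1$ of~(\ref{residue_term}), and it is formula~(\ref{equality_l1}) from the proof of Lemma~\ref{lemma_residue_term} (residue theorem at infinity on the contour $\mathcal{C}_\epsilon$; this is where $x_4>0$ enters) that rewrites it as $\tfrac{x}{\pi}\int_{x_3}^{x_4}t^{n_0}\mu_{m_0}(t)\sqrt{-d(t)}\,dt/(t(t-x))+xP_\infty(x^{n_0-1}Y_0(x)^{m_0})$. Summing the two pieces gives~(\ref{explicit_h(x)_Delta=zero}). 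The paper packages this as: $\hat\phi\equiv 0$ under $\Delta=0$, so $h=h_1+g_2$ with $g_2$ from~(\ref{def_l2}), then change variable in~$g_2$.
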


The proof of Proposition~\ref{explicit_Delta_zero} is postponed to
the end of Section~\ref{Study_solution}, since necessary notations
introduced there and some facts shown in
the proofs of Lemmas~\ref{lemma_residue_term} and~\ref{lemma_int_phi}
will shorten it.

Of course, by a similar analysis, we obtain
integral representations of the function $\tilde{h}$.
Also, to get the quantity $h_{00}$ -- as yet unknown --, we evaluate
Equation~(\ref{functional_equation}) at some $(x,y)$ in $\{(x,y) \in \mathbb{C}^{2} :
|x|<1, |y|<1, Q(x,y)=0\}$, for example 
$(1-\epsilon,Y_{0}(1-\epsilon))$, where $\epsilon$ is
small enough, see Lemma~\ref{properties_X_Y}.

\section{Asymptotic of the absorption probabilities}
\label{Study_solution}

In this part, we will study the asymptotic of the absorption
probabilities, defined in~(\ref{absorption_probabilities}). For that
purpose we need to study the properties of the function $w$ defined
in~(\ref{def_CGF_general_case}), notably in relation with the
finiteness of the group $\mathcal{H}$~: we will have  to distinguish
the cases for which $\omega_{2}/\omega_{3}\in \mathbb{N}$ from the
other cases.

\begin{prop}\label{properties_w}
The function $w$, defined on $\mathcal{G}_{\mathcal{M}}$
by~(\ref{def_CGF_general_case}), can be continued~; this
continuation is meromorphic on $\mathbb{C}$ if
$\omega_{2}/\omega_{3}\in \mathbb{N}$, meromorphic on
$\mathbb{C}\setminus [x_{3},x_{4}]$ and algebraic in the
neighborhood of $[x_{3},x_{4}]$ if $\omega_{2}/\omega_{3}\notin
\mathbb{N}$. In every case, $w$ has a simple pole at $x_{2}$ and
$\lfloor \omega_{2}/(2\omega_{3})\rfloor$ double poles at points
lying in $]x_{2},x_{3}[\cap (\mathbb{C}\setminus
\mathcal{G}_{\mathcal{M}})$. The behavior of $w$ at $x_{3}$ depends
strongly on the group $\mathcal{H}$~:
     \begin{enumerate}
          \item Suppose first that $\omega_{2}/\omega_{3}\in 2\mathbb{N}$. Then
                $w$ has a simple pole at $x_{3}$.
          \item Suppose now that $\omega_{2}/\omega_{3}\in 2\mathbb{N}+1$. Then
                $w$ is holomorphic at $x_{3}$.
          \item Suppose at last that $\omega_{2}/\omega_{3}\notin \mathbb{N}$. Then
                $w$ has an algebraic singularity at $x_{3}$, and more precisely
                in the neighborhood of $x_{3}$, $w$ can be written as
                $w_{1}(t)+w_{2}(t)(x_{3}-t)^{1/2}$, where $w_{1}$ and $w_{2}$
                are holomorphic functions in the
                neighborhood of $x_{3}$ and $w_{2}(x_{3})\neq 0$.
     \end{enumerate}
\end{prop}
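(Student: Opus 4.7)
The plan is to transport everything to the uniformization of Subsection~\ref{Galois_automorphisms} and study the meromorphic lift $W(\omega) := \wp_{1,3}(\tau+\omega)$ with $\tau = -(\omega_{1}+\omega_{2})/2$. By construction $W$ is meromorphic on $\mathbb{C}$, doubly periodic with lattice $\Lambda' := \omega_{1}\mathbb{Z}+\omega_{3}\mathbb{Z}$, and has double poles exactly on $-\tau+\Lambda'$. On $\mathcal{G}_{\mathcal{M}}$ one has $w(t)=W(x^{-1}(t))$ for the branch of $x^{-1}$ pinned by $x^{-1}(x_{2})=(\omega_{1}+\omega_{2})/2$ (forced by the simple pole at $x_{2}$ asserted before Proposition~\ref{w}).

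First, I would show that $w$ extends single-valuedly and meromorphically to $\mathbb{C}\setminus[x_{3},x_{4}]$ by lifting through the universal cover: removing the preimage of $[x_{3},x_{4}]$ from the torus $\mathbb{C}/\Lambda$ (with $\Lambda=\omega_{1}\mathbb{Z}+\omega_{2}\mathbb{Z}$) leaves a region whose cover by a strip in $\mathbb{C}$ allows coherent propagation of the chosen branch of $x^{-1}$. The dichotomy $\omega_{2}/\omega_{3}\in\mathbb{N}$ versus $\omega_{2}/\omega_{3}\notin\mathbb{N}$ arises in continuing $w$ across $[x_{3},x_{4}]$: crossing this cut amounts on the uniformization to applying the Galois involution $\xi(\omega)=-\omega$, so the two boundary values of $w$ are $W(\omega)$ and $W(-\omega)$. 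By evenness of $\wp_{1,3}$ these agree for all $\omega$ iff $2\tau=-(\omega_{1}+\omega_{2})\in\Lambda'$, which, using $\omega_{1}\in\Lambda'$ and the reality of $\omega_{2},\omega_{3}$, is equivalent to $\omega_{2}/\omega_{3}\in\mathbb{N}$. In that case $w$ extends meromorphically to all of $\mathbb{C}$; otherwise the non-trivial odd difference $W(\omega)-W(-\omega)$ together with the square-root behaviour of $x^{-1}$ across $[x_{3},x_{4}]$ produces the claimed algebraic representation $w=w_{1}+w_{2}(x_{3}-t)^{1/2}$ along a neighbourhood of the cut.

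Next, the poles of $w$ are located by projecting the poles of $W$ through $x$. Using the $\omega_{1}$-periodicity of $\Lambda'$, representatives can be taken as $\omega=-\tau+k\omega_{3}$ for $k\in\mathbb{Z}$. The $k=0$ pole sits over the half-period $\omega_{x_{2}}=(\omega_{1}+\omega_{2})/2$, so the $\sqrt{t-x_{2}}$ behaviour of $x^{-1}$ at this ramification point reduces $W$'s double pole to a simple pole of $w$ at $x_{2}$. For $k\neq 0$ lying in the half of the fundamental strip outside $\mathcal{G}_{\mathcal{M}}$, i.e. $0<k\omega_{3}<\omega_{2}/2$, the projection is unramified, so $W$'s double pole descends to a double pole of $w$ at the point $x(-\tau+k\omega_{3})\in \;]x_{2},x_{3}[\, \cap\,(\mathbb{C}\setminus\mathcal{G}_{\mathcal{M}})$. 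Counting gives exactly $\lfloor\omega_{2}/(2\omega_{3})\rfloor$ such poles.

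Finally, the trichotomy at $x_{3}$ comes from inspecting $W$ near $\omega_{x_{3}}$. Writing $\eta=x^{-1}(t)-\omega_{x_{3}}=C\sqrt{t-x_{3}}+O(t-x_{3})$ with $C\neq 0$, one has $w(t)=\wp_{1,3}(\tau+\omega_{x_{3}}+\eta)$, and the behaviour is dictated by the arithmetic of $\tau+\omega_{x_{3}}$ modulo $\Lambda'$. It is a lattice point of $\Lambda'$ exactly when $\omega_{2}/\omega_{3}\in 2\mathbb{N}$, in which case $\wp_{1,3}$ has a double pole there and the $\sqrt{t-x_{3}}$ behaviour of $\eta$ gives a simple pole of $w$ at $x_{3}$. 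It is a non-trivial half-period of $\Lambda'$ exactly when $\omega_{2}/\omega_{3}\in 2\mathbb{N}+1$, in which case $\wp_{1,3}'(\tau+\omega_{x_{3}})=0$ so only even powers of $\eta$ survive, and $w$ is holomorphic at $x_{3}$. Otherwise $\tau+\omega_{x_{3}}$ is a generic regular point, $\wp_{1,3}'(\tau+\omega_{x_{3}})\neq 0$, and the leading odd $\eta$-term produces the $(x_{3}-t)^{1/2}$ contribution with $w_{2}(x_{3})\neq 0$. The main obstacle throughout is the monodromy bookkeeping when propagating the branch of $x^{-1}$ past $\mathcal{M}$ and the precise identification of which half-period of $\Lambda$ corresponds to $x_{3}$ in the FIM conventions, for it is this identification that makes the arithmetic trichotomy of $\omega_{2}/\omega_{3}$ emerge specifically at $x_{3}$.
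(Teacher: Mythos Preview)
Your overall strategy matches the paper's: lift $w$ through the uniformization, reduce everything to the arithmetic of $\wp_{1,3}$, and read off the trichotomy at $x_3$ from where $\tau+\omega_{x_3}=\omega_2/2$ sits modulo $\Lambda'$. The trichotomy analysis and the pole counting agree with the paper, which carries out the same computation and records the local expansion at $x_3$ explicitly in~(\ref{w1_w2_x3}).

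There is, however, a genuine gap in your argument for single-valuedness on $\mathbb{C}\setminus[x_3,x_4]$. You claim that removing the preimage of $[x_3,x_4]$ from the torus and passing to a strip ``allows coherent propagation of the chosen branch of $x^{-1}$''. It does not: the map from the cylinder $T\setminus x^{-1}([x_3,x_4])$ down to $\mathbb{C}\setminus[x_3,x_4]$ is still two-to-one, branched over $x_1$ and $x_2$, so no single-valued branch of $x^{-1}$ exists there. What is actually true---and what the paper verifies by hand---is that $W$ is invariant under the monodromy around $x_1$ and $x_2$: on $[x_1,x_2]$ the two determinations $\omega_1(t),\omega_2(t)$ of $x^{-1}$ satisfy $\omega_1(t)+\omega_2(t)=\omega_1+\omega_2=-2\tau$, so evenness of $\wp_{1,3}$ gives $W(\omega_1(t))=W(\omega_2(t))$; on $[x_4,x_1]$ they differ by $\omega_1\in\Lambda'$ and periodicity does the job. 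In your own language, the point is that $\tau+\omega_{x_2}=0$ and $\tau+\omega_{x_1}=-\omega_1/2$ are \emph{always} half-periods of $\Lambda'$, whereas $\tau+\omega_{x_3}=\omega_2/2$ is a half-period of $\Lambda'$ only when $\omega_2/\omega_3\in\mathbb{N}$; this asymmetry is precisely why $w$ is automatically single-valued around $[x_1,x_2]$ but not around $[x_3,x_4]$. A related imprecision: ``crossing $[x_3,x_4]$ amounts to applying $\xi(\omega)=-\omega$'' is correct only on the quotient torus; in the lift to $\mathbb{C}$, where $W$ lives, the monodromy about $x_3$ is the reflection $\omega\mapsto 2\omega_{x_3}-\omega$, and your condition $2\tau\in\Lambda'$ agrees with the correct one $2(\tau+\omega_{x_3})\in\Lambda'$ only because $2\omega_{x_3}\equiv\omega_1\in\Lambda'$.
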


\begin{proof}
The explicit formula~(\ref{def_CGF_general_case}) of $w$ shows that
we have to study the reciprocal function of the uniformization
$x(\omega)$. With Subsection~\ref{Galois_automorphisms} we get
$x(\omega)=t$ if and only if $\wp_{1,2}(\omega)=f(t)$, where if
$x_{4}\neq \infty$ then $f(t)=d''(x_{4})/6+d'(x_{4})/(t-x_{4})$ and
if $x_{4}=\infty$ then $f(t)=(d''(0)+d'''(0)t)/6$. Moreover, by
construction, in both cases, $f(X(y_{1})) =
\wp_{1,2}((\omega_{2}+\omega_{3})/2)$, $f(X(y_{2})) = \wp_{1,2}
((\omega_{1}+\omega_{2}+\omega_{3})/2)$, $f(x_{1}) = \wp_{1,2}
(\omega_{2}/2)$, $f(x_{2}) = \wp_{1,2} ( (\omega_{1}+\omega_{2})/2
)$ and $f(\mathcal{M}) =\wp_{1,2}([(\omega_{2}+\omega_{3})/2,
(\omega_{2}+\omega_{3})/2+\omega_{1} ])$. In particular, $f$ is an
automorphism of $\mathbb{C}$ and maps $\mathcal{G}_{\mathcal{M}}$
onto $]\omega_{2}/2,\omega_{2}/2+\omega_{3}/2[\times
]0,\omega_{1}/\imath[$, see Figure~\ref{Locations_of_the_cuts}.

Recall that on a fundamental parallelogram $[0,\omega_{2}[\times
[0,\omega_{1}/\imath[$, $\wp_{1,2}$ takes each value twice. Also, in
$[0,\omega_{2}/2[\times [0,\omega_{1}/\imath [$ or in
$[\omega_{2}/2,\omega_{2}[\times [0,\omega_{1}/\imath [$ $\wp_{1,2}$ is one
to one. For these reasons and since $\omega_3<\omega_2$, we obtain
the existence of a function $t\mapsto \omega(t)$ defined on
$\mathbb{C}$, two-valued for $t\in \mathbb{R}\setminus
[x_{2},x_{3}]$, one-valued everywhere else, that verifies, for all
$t\in \mathbb{C}$, $\wp_{1,2}( \omega(t) )=f(t)$~; moreover
$\omega(\mathbb{C})=[\omega_{2}/2,\omega_{2}]\times
[0,\omega_{1}/\imath]$.

We show now that though $\omega$ is two-valued on
$\mathbb{R}\setminus [x_{2},x_{3}]$, $w$ is single-valued on
$\mathbb{C}\setminus [x_{3},x_{4}]$. We do this by studying
precisely the equation $\wp_{1,2}( \omega )=f(t)$ at points $t$
where it has more than one solution~:
\begin{itemize}
\item For $t\in [x_{4},x_{1}]$, the two values of $\omega(t)$,
      say $\omega_{1}(t)$ and $\omega_{2}(t)$, verify~:
      $\omega_{1}(t)\in [\omega_{2}/2,\omega_{2}]$,
      $\omega_{2}(t)\in [\omega_{1}+\omega_{2}/2,\omega_{1}+\omega_{2}]$
      and $\omega_{2}(t)-\omega_{1}(t)=\omega_{1}$.
\item For $t\in [x_{1},x_{2}]$, the two values of $\omega(t)$ are such that~:
      $\omega_{1}(t),\omega_{2}(t)\in [\omega_{2}/2,\omega_{2}/2+\omega_{1}]$
      and $\omega_{1}(t)-(\omega_{1}/2+\omega_{2}/2)=
      (\omega_{1}/2+\omega_{2}/2)-\omega_{2}(t)$.
\item For $t\in [x_{3},x_{4}]$, the two values of $\omega(t)$ verify~:
      $\omega_{1}(t),\omega_{2}(t)\in [\omega_{2},\omega_{2}+\omega_{1}]$
      and $\omega_{1}(t)-\omega_{2}/2=\omega_{2}/2-\omega_{2}(t)$.
\end{itemize}
Using respectively the facts that $\wp_{1,2}$ is even and $\omega_{1}$ periodic,
$w$ is in fact single-valued on $[x_{1},x_{2}]$ and $[x_{4},x_{1}]$
respectively, hence on $\mathbb{C}\setminus [x_{3},x_{4}]$.

To show that $w$ has a simple pole at $x_{2}$, we can for instance
use an explicit expression of $\omega(t)$. For example,  for all
$t\in [x_{2},X(y_{2})]$,
     \begin{equation*}
          \omega\left(t\right)=\frac{\omega_{1}+\omega_{2}}{2}+
          \int_{f\left(x_{2}\right)}^{f(t)}
          \frac{\textnormal{d}z}{\sqrt{4\left(z-f\left(x_{1}\right)\right)
          \left(z-f\left(x_{2}\right)\right)\left(z-f\left(x_{3}\right)\right)}}.
     \end{equation*}
Making an expansion of this quantity in the neighborhood of zero,
using that $\wp_{1,3}(\omega)=1/\omega^{2}+\mathcal{O}(\omega^{2})$
and that $\wp_{1,3}$ is even on $\mathbb{C}$, as well
as~(\ref{def_CGF_general_case}), we obtain that
     \begin{equation*}
          w\left(t\right)=\frac{\left(f\left(x_{1}\right)-f\left(x_{2}\right)\right)
          \left(f\left(x_{2}\right)-f\left(x_{3}\right)\right)}
          {f'\left(x_{2}\right)\left(x_{2}-t\right)}+\widehat{w}(t),
     \end{equation*}
where $\hat{w}$ is holomorphic in the neighborhood of $x_{2}$.

We will now study the behavior of the function $w$ in the neighborhood of
$x_{3}$. Define $R=[0,\omega_{3}]\times
[-\omega_{1}/(2\imath),\omega_{1}/(2\imath)]$ and note that
$\omega(\mathbb{C})-(\omega_{1}+\omega_{2})/2=
[0,\omega_{2}/2]\times [-\omega_{1}/(2\imath),\omega_{1}/(2\imath)]$.

We will consider separately the three cases $\Delta=0$, $\Delta<0$ and $\Delta>0$.
First of all let us show  that $\Delta<0$ (resp.\ $\Delta=0$, $\Delta>0$) implies
$\omega_{3}>\omega_{2}/2$ (resp.\ $\omega_{3}=\omega_{2}/2$, $\omega_{3}<\omega_{2}/2$).
We already know from Subsections~\ref{Galois_automorphisms}
and~\ref{Finite_groups_and_s_s_functions} that $\Delta=0$ is equivalent to
$\omega_{2}/\omega_{3}=2$. As a consequence, we obtain that to prove
that $\Delta<0$ (resp.\ $\Delta>0$) is equivalent to
$\omega_{3}>\omega_{2}/2$ (resp.\ $\omega_{3}<\omega_{2}/2$), it suffices to
prove that there exists \emph{one} walk verifying simultaneously
$\Delta<0$ and $\omega_{3}>\omega_{2}/2$ (resp.\ $\Delta>0$ and
$\omega_{3}<\omega_{2}/2$). Indeed, using the continuity of
$\omega_{2}$, $\omega_{3}$ and $\Delta$ w.r.t.\ the parameters
$p_{i j}$ and the intermediate value theorem, we obtain the results
for \emph{all} walks. We have already seen that the second walk of
Figure~\ref{SRW_Flatto_X} (i.e.\ $p_{-11}+p_{10}+p_{0-1}=1$)
is such that $\omega_{2}/\omega_{3}=3$.
Moreover, in this case, $\Delta=p_{-11}p_{10}p_{0-1}>0$. Also, we
can verify by a direct calculation starting from~(\ref{omega123})
that the walk $p_{-1-1}+p_{10}+p_{01}=1$ verifies
$\omega_{2}/\omega_{3}=3/2$~; moreover, for this walk
$\Delta=-p_{-1-1}p_{10}p_{01}<0$.

{\rm (i)}   Suppose that $\Delta=0$. Then $\omega_{2}/\omega_{3}=2$
            and
            $R=\omega(\mathbb{C})-(\omega_{1}+\omega_{2})/2$, so that by the same analysis
            as the one done just above for $x_{2}$, we find that
            $w$ has a pole of order one at $x_{3}$.

{\rm (ii)}  Suppose now that $\Delta<0$. Then
            $\omega_{3}>\omega_{2}/2$ and $\omega(\mathbb{C})-(\omega_{1}+\omega_{2})/2$
            is strictly included in $R$, in such a way that $w$ has no
            poles except at $x_{2}$
            and is two-valued on $[x_{3},x_{4}]$, for any (finite or infinite)
            order of the group $\mathcal{H}$.

{\rm (iii)} Suppose at last that $\Delta>0$. This implies
            $\omega_{3}<\omega_{2}/2$,
            thus $\omega(\mathbb{C})-(\omega_{1}+\omega_{2})/2$
            contains strictly $R$. Moreover, we can write,
            with $n=\lfloor  \omega_{2}/\left(2\omega_{3}\right)\rfloor$~:
     \begin{equation*}
          \omega\left(\mathbb{C}\right)-\frac{\omega_{1}+\omega_{2}}{2}=
          \bigcup_{k=0}^{n-1}
          \left(k\omega_3+R\right)\cup \widehat{R},\hspace{5mm}
          \widehat{R}=\left[n\omega_{3}/2,\omega_{2}/2\right]\times
          \left[-\omega_{1}/\left(2\imath\right),\omega_{1}/\left(2\imath\right)\right].
     \end{equation*}

This equality, added to the fact that $\wp_{1,3}$ has, on the
fundamental parallelogram $[0,\omega_{3}[\times [0,\omega_{1}/\imath[$,
only one pole, at zero and of order two, shows that $w$ has $\lfloor
\omega_{2}/(2\omega_{3})\rfloor$ double poles at points lying in
$]x_{2},x_{3}[\cap (\mathbb{C}\setminus \mathcal{G}_{\mathcal{M}})$.

Consider the particular case $\omega_{2}/\omega_{3}\in 2\mathbb{N}$.
Then the rectangle $\hat{R}$ is reduced to one point and
$\omega([x_{3},x_{4}])-(\omega_{1}+\omega_{2})/2$ is congruent to
$[-\omega_{1}/(2\imath),\omega_{1}/(2\imath)]$, so that for the same reasons
as in the case $\Delta=0$, we see that $w$ has a simple pole at
$x_{3}$.

Consider next the other particular case $\omega_{2}/\omega_{3}\in
2\mathbb{N}+1$. Then
$\omega([x_{3},x_{4}])-(\omega_{1}+\omega_{2})/2$ is congruent to
$\omega_{3}/2+[-\omega_{1}/(2\imath),\omega_{1}/(2\imath)]$, in the
neighborhood of which $\wp_{1,3}$ is holomorphic, so $w$ has no pole
at $x_{3}$ in this case.

If $\omega_{2}/\omega_{3}\notin \mathbb{N}$, then
$\omega([x_{3},x_{4}])-(\omega_{1}+\omega_{2})/2$ is congruent
neither to $\omega_{3}/2+[-\omega_{1}/(2\imath),\omega_{1}/(2\imath)]$ nor to
$[-\omega_{1}/(2\imath),\omega_{1}/(2\imath)]$, in particular $w$ has no pole
at $x_{3}$.

Consider now more global aspects and show that $w$ is meromorphic
on $\mathbb{C}$ if and only~if~$\omega_{2}/\omega_{3}\in\mathbb{N}$.

Recall from the beginning of the proof that for $t\in
[x_{3},x_{4}]$, the two values of $\omega(t)$ are such that
$\omega_{1}(t),\omega_{2}(t)\in [\omega_{2},\omega_{2}+\omega_{1}]$
and $\omega_{1}(t)-\omega_{2}/2=\omega_{2}/2-\omega_{2}(t)$. In
addition, we have shown just above that if
$\omega_{2}/\omega_{3}\in 2\mathbb{N}$ (resp.\
$\omega_{2}/\omega_{3}\in 2\mathbb{N}+1$), then
$\omega([x_{3},x_{4}])-(\omega_{1}+\omega_{2})/2$ is congruent to
$[-\omega_{1}/(2\imath),\omega_{1}/(2\imath)]$ (resp.\
$\omega_{3}/2+[-\omega_{1}/(2\imath),\omega_{1}/(2\imath)]$). But
$\omega\mapsto \wp_{1,3}(\omega)$ and $\omega\mapsto
\wp_{1,3}(\omega_{3}/2+\omega)$ are even functions, so that in both
cases $w$ is single-valued and meromorphic in the neighborhood of
$[x_{3},x_{4}]$.

Suppose now that $\omega_{2}/\omega_{3}\notin \mathbb{N}$. Since
$\omega(x_{3})=\omega_{1}/2+\omega_{2}$, then
$w(t)=\wp_{1,3}(\omega_{2}/2+(\omega(t)-\omega(x_{3})))$. On the
other hand, if $\omega$ is close to zero, we have~:
     \begin{equation*}
          \wp_{1,3}\left(\omega_{2}/2+\omega\right)=\wp_{1,3}\left(\omega_{2}/2\right)+
          \sum_{k=1}^{+\infty } \frac{\wp_{1,3}^{(2k)}
          \left(\omega_{2}/2\right)}{\left(2k\right)!}\omega^{2k}+
          \omega \sum_{k=0}^{+\infty } \frac{\wp_{1,3}^{(2k+1)}
          \left(\omega_{2}/2\right)}{\left(2k+1\right)!} \omega^{2k}.
     \end{equation*}
Also, by a similar calculation as the one done when we have
studied the behavior of $w$ in the neighborhood of $x_{2}$, we
obtain~:
     \begin{eqnarray*}
          \omega\left(t\right)-\omega\left(x_{3}\right)&=&
          \int_{f\left(t\right)}^{f\left(x_{3}\right)}
          \frac{\textnormal{d}z}{\sqrt{\left(z-f\left(x_{1}\right)\right)
          \left(z-f\left(x_{2}\right)\right)\left(z-f\left(x_{3}\right)\right)}}\\&=&
          -\left(\frac{-f'\left(x_{3}\right)\left(x_{3}-t\right)}
          {4\left(f\left(x_{3}\right)-f\left(x_{2}\right)\right)
          \left(f\left(x_{3}\right)-f\left(x_{1}\right)\right)}\right)^{1/2}
          \left(1+\left(t-x_{3}\right)\check{w}\left(t\right)\right),
\end{eqnarray*}
where $\check{w}$ is holomorphic in a neighborhood of $x_{3}$. Thus,
in a neighborhood of $x_{3}$, we can write $w(t)$ as the sum
$w(t)=w_{1}(t)+w_{2}(t)(x_{3}-t)^{1/2}$, where $w_{1}$ and $w_{2}$
are holomorphic in a neighborhood of $x_{3}$ and
     \begin{equation}\label{w1_w2_x3}
          w_{1}\left(x_{3}\right)=\wp_{1,3}\left(\omega_{2}/2\right),\hspace{5mm}
          w_{2}\left(x_{3}\right)=-\left(\frac{-f'\left(x_{3}\right)}
          {4\left(f\left(x_{3}\right)-f\left(x_{2}\right)\right)
          \left(f\left(x_{3}\right)-f\left(x_{1}\right)\right)}\right)^{1/2}
          \wp_{1,3}'\left(\omega_{2}/2\right).
     \end{equation}
 This closes the proof of Proposition~\ref{properties_w}.
\end{proof}

\begin{rem}
As a consequence of Proposition~\ref{properties_w}, the CGF $w$ is a
rational function in the particular cases $\omega_{2}/\omega_{3}\in
\mathbb{N}$. The theory of transformation of elliptic functions
gives a constructive way to write the expression of $w$ on $\mathbb{C}$.
Indeed, if we note $\wp_{1,2,n}$ the Weierstrass function
associated to the periods $\omega_{1}$ and $\omega_{2}/n$, then
the following formula, that can be found in~\cite{SG2},
     \begin{equation*}
          \wp_{1,2,n}\left(\omega\right)
          =\sum_{k=1}^{n-1}\big(\wp_{1,2}\left(\omega +k
          \omega_{2}/n\right)-
          \wp_{1,2}\left(k\omega_{2}/n \right)\big)+\wp_{1,2}\left(\omega\right),
     \end{equation*}
allows to express $\wp_{1,3}(\omega)$ in terms of $\wp_{1,2}(\omega)$
and $\wp_{1,2}'(\omega)$, using also
addition formulas for the Weierstrass function $\wp_{1,2}$. In practice,
that is what we use.
For instance, after some calculations, this method gives a suitable
CGF for the second walk of Figure~\ref{SRW_Flatto_X}, with
transition probabilities verifying $p_{-11}+p_{10}+p_{0-1}=1$~:
     \begin{equation*}
          w\left(t\right)=\frac{t}{\left(t-x_{2}\right)
          \left(t-\left(p_{-11}p_{0-1}/\left(p_{10}^{2}x_{2}\right)
          \right)^{1/2}\right)^{2}}.
     \end{equation*}
\end{rem}

We are now ready to state and prove the main result of
Section~\ref{Study_solution} on the asymptotic of the absorption
probabilities.

\noindent\textbf{Notation.}
Throughout the whole paper, for two sequences
$(a_{k})_{k}$ and $(b_{k})_{k}$ 
we will write $a_{k}\sim_{k\to +\infty} b_{k}$ or $a_{k}\sim b_{k}$ 
if $\lim_{k\to +\infty}a_{k}/b_{k}=1$.

\begin{thm}\label{main_result_asymptotic_absorption_probabilities}
We recall that for $k\in \mathbb{N}^{*}$, $h_{k}$ denotes $\mathbb{P}_{(n_{0} ,
m_{0})} (\text{to be absorbed at}\ (k,0))$.

Suppose first that $p_{11}+p_{-1-1}+p_{1-1}+p_{-11}<1$.
\newline $\bullet$  If $\omega_{2}/\omega_{3}\in 2\mathbb{N}$, then
$h_{k}\sim_{k\to +\infty} h_{1,k}+h_{2,k}$, $h_{1,k}$ and $h_{2,k}$ being defined
in~(\ref{asymptotic_hk1}) and~(\ref{asymptotic_h2k_omega23_inN}).
\newline $\bullet$ If $\omega_{2}/\omega_{3}\in 2\mathbb{N}+1$, then
$h_{k}\sim_{k\to +\infty} h_{1,k}$, $h_{1,k}$ being defined
in~(\ref{asymptotic_hk1}).
\newline $\bullet$ If $\omega_{2}/\omega_{3}\notin \mathbb{N}$, then
$h_{k}\sim_{k\to +\infty} h_{1,k}+h_{2,k}$, $h_{1,k}$ and $h_{2,k}$ being defined
in~(\ref{asymptotic_hk1}) and~(\ref{asymptotic_h2k_omega23_notinN}).

Suppose now that $p_{11}+p_{-1-1}+p_{1-1}+p_{-11}=1$.
\newline $\bullet$ If $k$ and $n_{0}+m_{0}$ don't have the same parity, then
$h_{k}=0$, since $(k,0)$ is not reachable.
\newline $\bullet$ If they have the same parity, then we obtain the
asymptotic of $h_{k}$ by multiplying by two the one of $h_{k}$ in the
case $p_{11}+p_{-1-1}+p_{1-1}+p_{-11}<1$, and this in the three
cases $\omega_{2}/\omega_{3}\in 2\mathbb{N}$,
$\omega_{2}/\omega_{3}\in 2\mathbb{N}+1$ and
$\omega_{2}/\omega_{3}\notin \mathbb{N}$.
\end{thm}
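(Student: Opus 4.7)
The plan is to perform a singularity analysis of $h(x)$ at its dominant singularity on the positive real axis, starting from the integral representation~(\ref{new_integral_form_h}) of Theorem~\ref{explicit_h(x)_second}. By Corollary~\ref{continuation_h_h_tilde}, $h$ is holomorphic on $\mathbb{C}\setminus[x_3,x_4]$, and by Lemma~\ref{lemma_branched_points} we have $|x_3|>1$; the dominant singularity on the positive real axis is therefore $x_3$, which fixes the exponential rate $p=x_3$ in the decay of $h_k$. The announced form $h_k \sim h_{1,k}+h_{2,k}$ with the $k^{-3/2}$ polynomial correction should then come from identifying $(x_3-x)^{1/2}$-type singular behaviors of $h$ at $x_3$ and applying a standard Flajolet--Odlyzko type transfer theorem.

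Formula~(\ref{new_integral_form_h}) splits $h$ naturally into an explicit piece $x^{n_0}Y_0(x)^{m_0}$ and an integral piece whose $x$-dependence enters only through $w(x)$. The explicit piece carries a square-root singularity at $x_3$ coming from $Y_0(x)=[-b(x)-\sqrt{d(x)}]/(2a(x))$ and $d(x_3)=0$, with leading coefficient expressible in terms of $d'(x_3)$, $a(x_3)$ and $b(x_3)$; transferring this singularity produces the contribution $h_{1,k}$, present in every case. The integral piece is where the group $\mathcal{H}$ enters, through the local behavior of $w$ at $x_3$ described by the trichotomy of Proposition~\ref{properties_w}. When $\omega_2/\omega_3\in 2\mathbb{N}+1$, $w$ is holomorphic at $x_3$, so the integrand is smooth in $x$ there and the integral piece contributes only at strictly lower order; hence $h_k\sim h_{1,k}$ alone. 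When $\omega_2/\omega_3\notin\mathbb{N}$, the expansion $w(x)=w_1(x)+w_2(x)(x_3-x)^{1/2}$ given by Proposition~\ref{properties_w} inserts a square root directly into the kernel $w'(t)/(w(t)-w(x))$; the leading singular coefficient, involving the values~(\ref{w1_w2_x3}) of $w_1$ and $w_2$ at $x_3$, is transferred to yield $h_{2,k}$ in the form~(\ref{asymptotic_h2k_omega23_notinN}). When $\omega_2/\omega_3\in 2\mathbb{N}$, $w$ has a simple pole at $x_3$; one first subtracts the polar part of $w(x)$ and the remaining finite part carries the $(x_3-x)^{1/2}$ singularity coming from the branching of the uniformizer $\omega(\cdot)$ entering~(\ref{def_CGF_general_case}), giving $h_{2,k}$ of the form~(\ref{asymptotic_h2k_omega23_inN}).

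The degenerate case $p_{11}+p_{-1-1}+p_{1-1}+p_{-11}=1$ is treated separately by invoking Lemma~\ref{lemma_branched_points}(2), which yields $x_1=-x_2$ and $x_4=-x_3$. This forces $d$ to be an even polynomial, so $Y_0$, $w$ and the whole integrand become even or odd with computable parity, and the walk itself has a parity constraint: $(k,0)$ is unreachable from $(n_0,m_0)$ unless $k$ and $n_0+m_0$ have the same parity, whence $h_k=0$ otherwise. When parities agree, both $+x_3$ and $-x_3$ are singularities of identical type by this symmetry, their contributions add up in the Cauchy coefficient integral, and $h_k$ is exactly twice what it is in the non-degenerate case, in all three subcases on $\omega_2/\omega_3$.

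\textbf{Main obstacle.} The most delicate step will be the case $\omega_2/\omega_3\in 2\mathbb{N}$: because $w(x)\to\infty$ at $x_3$, the kernel $w'(t)/(w(t)-w(x))$ naively vanishes there, which would suggest that the integral piece is smooth at $x_3$ and the group had no influence. The subtle point is that the next-order correction is genuinely singular, as the uniformizer $\omega(\cdot)$ defining $w$ in~(\ref{def_CGF_general_case}) is branched across the cut $[x_3,x_4]$. Carefully expanding $w(x)$ around $x_3$ and keeping track of how its $(x_3-x)^{-1}$ polar part and its $(x_3-x)^{1/2}$ branching interact inside the integral is what produces the non-trivial $(x_3-x)^{1/2}$ singularity of $h$ with a computable, group-dependent coefficient, and this is the technical core that must match the explicit expression announced in~(\ref{asymptotic_h2k_omega23_inN}).
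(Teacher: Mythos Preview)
Your overall plan---singularity analysis of $h$ at $x_3$ via a transfer theorem, with the three cases dictated by the local behavior of $w$ at $x_3$ from Proposition~\ref{properties_w}---is essentially the paper's strategy, and your treatment of the cases $\omega_2/\omega_3\in 2\mathbb{N}+1$ and $\omega_2/\omega_3\notin\mathbb{N}$ is correct. The paper differs only cosmetically there: it rewrites $h_1$ as an integral over $[x_3,x_4]$ and applies Laplace's method to read off $h_{1,k}$ directly, rather than expanding $Y_0$ locally and transferring; and for $h_{2,k}$ in the algebraic case it invokes Pringsheim's theorem, which is the same transfer principle you cite.

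There is, however, a genuine gap in your handling of the case $\omega_2/\omega_3\in 2\mathbb{N}$, precisely where you flag the main obstacle. You write that after subtracting the polar part of $w(x)$, ``the remaining finite part carries the $(x_3-x)^{1/2}$ singularity coming from the branching of the uniformizer $\omega(\cdot)$''. This contradicts Proposition~\ref{properties_w}: when $\omega_2/\omega_3\in\mathbb{N}$, $w$ is \emph{meromorphic on all of $\mathbb{C}$}, so there is no branching of $w$ across $[x_3,x_4]$ at all (the two-valuedness of $\omega(\cdot)$ there is exactly cancelled by the parity of $\wp_{1,3}$). Consequently, $w(x)$ near $x_3$ is a Laurent series with a simple pole and no square-root term, and your proposed mechanism produces nothing.

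The correct mechanism is a \emph{movable} singularity of the integrand in the $t$-variable. Since $w$ has simple poles at both $x_2$ and $x_3$, there is a holomorphic involution $\sigma$ with $w\circ\sigma=w$ and $\sigma(x_3)=x_2$; the kernel $w'(t)/(w(t)-w(x))$ then has a pole in $t$ at $t=\sigma(x)$, and as $x\to x_3$ this pole collides with the integration endpoint $x_2$, where the factor $\sqrt{-d(t)}$ vanishes like $(x_2-t)^{1/2}$. It is this endpoint collision that generates a $(x_3-x)^{1/2}$ singularity of the integral, with coefficient governed by $\sigma'(x_3)=\mathrm{Res}(w,x_2)/\mathrm{Res}(w,x_3)$---which is exactly how the ratio of residues enters~(\ref{asymptotic_h2k_omega23_inN}). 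The paper makes this explicit by subtracting $\sigma'(t)x/(\sigma(t)(\sigma(t)-x))$ from the kernel (leaving a function holomorphic in $x$ on a disc of radius $>x_3$), changing variable $u=\sigma(t)$ in the remaining piece to obtain an integral over $[x_3,\sigma(x_1)]$, and then applying Laplace's method. Without identifying $\sigma$ and this collision mechanism you cannot recover~(\ref{asymptotic_h2k_omega23_inN}).
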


By Theorem~\ref{explicit_h(x)_second} and thanks to~(\ref{def_phi}),
$h$ can be split as $h=h_{1}+h_{2}$, where
     \begin{eqnarray}
          h_{1}\left(x\right)&=& x^{n_{0}}Y_{0}\left(x\right)^{m_{0}}
          +\frac{x}{\pi }\int_{x_{1}}^{x_{2}}
          \frac{t^{n_{0}-1}\mu_{m_{0}}\left(t\right)}{t-x}
          \sqrt{-d\left(t\right)}
          \textnormal{d}t,\label{residue_term}\\
          h_{2}\left(x\right)&=&\frac{1}{\pi }\int_{x_{1}}^{x_{2}}
          t^{n_{0}}\phi\left(t,x\right)
          \mu_{m_{0}}\left(t\right)\sqrt{-d\left(t\right)}
          \textnormal{d}t.\label{int_phi}
     \end{eqnarray}
Theorem~\ref{main_result_asymptotic_absorption_probabilities} is an
immediate consequence of Lemmas~\ref{lemma_residue_term}
and~\ref{lemma_int_phi} below on the asymptotic behavior of Taylor
coefficients of $h_{1}$ and $h_{2}$ respectively.

\begin{lem}\label{lemma_residue_term}
The function $h_{1}$, initially defined in $\mathbb{C}\setminus
[x_{1},x_{2}]\cup [x_{3},x_{4}]$ by~(\ref{residue_term}), admits a
holomorphic continuation in $\mathbb{C}\setminus [x_{3},x_{4}]$. We
still note $h_{1}$ the continuation of this function and we set
$h_{1}(x)=\sum_{k=0}^{+\infty}h_{1,k}x^{k}$. Suppose first that
$p_{11}+p_{-1-1}+p_{1-1}+p_{-11}<1$~; then
     \begin{equation}\label{asymptotic_hk1}
          h_{1,k}\sim \frac{m_{0}x_{3}^{n_{0}+1/2}}{4\sqrt{\pi}}
          \left(\frac{-d'\left(x_{3}\right)}
          {a\left(x_{3}\right)c\left(x_{3}\right)}\right)^{1/2}
          \left(\frac{c\left(x_{3}\right)}{a\left(x_{3}\right)}\right)^{m_{0}/2}
          \frac{1}{k^{3/2}x_{3}^{k}},\hspace{5mm}k\to \infty .
     \end{equation}
Suppose now that $p_{11}+p_{-1-1}+p_{1-1}+p_{-11}=1$~; then the
process can hit $(k,0)$ if and only if $k$ and $n_{0}+m_{0}$ have the same
parity. Therefore $h_{k}=0$ if $k$ and $n_{0}+m_{0}$ don't have the
same parity. If they have the same parity, then $h_{1,k}$ is equivalent to two times
the right member of~(\ref{asymptotic_hk1}).
\end{lem}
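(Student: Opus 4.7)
\noindent\emph{Proof sketch.} The plan has three stages: extend $h_1$ holomorphically across $[x_1,x_2]$, apply the transfer theorem at the singularity $x_3$ in the generic regime, and handle the exceptional case by combining contributions from the two equidistant singularities $\pm x_3$.

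For the extension, I would verify that the jumps across $(x_1,x_2)$ of the two summands in \eqref{residue_term} cancel. By Sokhotski-Plemelj, the Cauchy-type integral has jump $2\imath x^{n_0}\mu_{m_0}(x)\sqrt{-d(x)}$. For $x^{n_0}Y_0(x)^{m_0}$, the boundary values $Y_0(x\pm\imath 0)=(-b(x)\mp\imath\sqrt{-d(x)})/(2a(x))$ on the cut yield, after expanding $(\alpha+\imath\beta)^{m_0}-(\alpha-\imath\beta)^{m_0}$ and extracting the imaginary part via $\imath^{2j+1}=\imath(-1)^j$, exactly $-2\imath\sqrt{-d(x)}\mu_{m_0}(x)$ by the very definition \eqref{def_mu} of $\mu_{m_0}$. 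Multiplied by $x^{n_0}$ this cancels the jump of the integral, so $h_1$ extends holomorphically to $\mathbb{C}\setminus[x_3,x_4]$.

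In the regime $p_{11}+p_{-1-1}+p_{1-1}+p_{-11}<1$, Lemma~\ref{lemma_branched_points} gives $|x_3|<|x_4|$, so $x_3$ is the unique singularity of $h_1$ on $\{|x|=x_3\}$. The Cauchy integral in \eqref{residue_term} is analytic at $x_3$, so the singular behavior there is carried entirely by $x^{n_0}Y_0(x)^{m_0}$. Using $d(x_3)=0$ and $d'(x_3)<0$, one expands $Y_0(x)=Y_0(x_3)-\sqrt{-d'(x_3)}\sqrt{x_3}\,(1-x/x_3)^{1/2}/(2a(x_3))+O(x_3-x)$, so the singular part of $x^{n_0}Y_0(x)^{m_0}$ at $x_3$ is $-B\,(1-x/x_3)^{1/2}$ with $B=m_0 x_3^{n_0+1/2}Y_0(x_3)^{m_0-1}\sqrt{-d'(x_3)}/(2a(x_3))$. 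The Flajolet-Odlyzko transfer theorem, together with $\Gamma(-1/2)=-2\sqrt{\pi}$ and the identity $Y_0(x_3)^2=c(x_3)/a(x_3)$ used to rewrite $Y_0(x_3)^{m_0-1}/a(x_3)=(c(x_3)/a(x_3))^{m_0/2}/\sqrt{a(x_3)c(x_3)}$, gives exactly \eqref{asymptotic_hk1}.

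When $p_{11}+p_{-1-1}+p_{1-1}+p_{-11}=1$, Lemma~\ref{lemma_branched_points} gives $x_4=-x_3$, and since every allowed jump changes $X+Y$ by an even integer, $h_k$ vanishes unless $k\equiv n_0+m_0\pmod 2$. The function $h_1$ now has two singularities of equal modulus at $\pm x_3$, whose contributions must be summed. In this even-$d$ regime a monodromy argument shows that $\sqrt{d(-x)}=-\sqrt{d(x)}$, equivalently $Y_0(-x)=-Y_0(x)$, so the singular part of $Y_0$ at $-x_3$ is $+\sqrt{-d'(x_3)}\sqrt{x+x_3}/(2a(x_3))$, opposite in sign to that at $x_3$. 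Combining this sign flip with $Y_0(-x_3)^{m_0-1}=(-1)^{m_0-1}Y_0(x_3)^{m_0-1}$ and $(-x_3)^{n_0}=(-1)^{n_0}x_3^{n_0}$, the contribution of $-x_3$ to $h_{1,k}$ equals the one from $x_3$ multiplied by $(-1)^{n_0+m_0+k}$; the total then carries the factor $1+(-1)^{n_0+m_0+k}$, equal to $2$ for matching parity and $0$ otherwise. The main obstacle is precisely this sign bookkeeping: one must track the branch of $\sqrt{d}$ consistently across the two intervals $(x_2,x_3)$ and $(-x_3,-x_2)$ so that the combined asymptotic vanishes exactly on the parity class where $h_k$ is identically zero.
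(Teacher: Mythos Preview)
Your argument is correct and takes a genuinely different route from the paper's.

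For the holomorphic continuation, the paper does not use jump cancellation: it applies the residue theorem at infinity (distinguishing the cases $x_4>0$ and $x_4<0$) to rewrite $h_1$ as an integral over $[x_3,x_4]$ plus a polynomial, namely
\[
h_1(x)=\frac{x}{\pi}\int_{x_3}^{x_4}\frac{t^{n_0-1}\mu_{m_0}(t)\sqrt{-d(t)}}{t-x}\,\mathrm{d}t+xP_\infty\!\bigl(x\mapsto x^{n_0-1}Y_0(x)^{m_0}\bigr)(x),
\]
from which holomorphy on $\mathbb{C}\setminus[x_3,x_4]$ is manifest. This formula also yields the exact expression $h_{1,k}=\pi^{-1}\int_{x_3}^{x_4}t^{n_0-1-k}\mu_{m_0}(t)\sqrt{-d(t)}\,\mathrm{d}t$ for large $k$, to which the paper applies Laplace's method directly. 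Your Sokhotski--Plemelj/Morera argument for the extension, followed by a local expansion of $x^{n_0}Y_0(x)^{m_0}$ at $x_3$ and the Flajolet--Odlyzko transfer theorem, reaches the same asymptotic more quickly and without the $x_4\gtrless0$ case split; on the other hand, the paper's integral representation is reused elsewhere (notably in the proof of Proposition~\ref{explicit_Delta_zero}), so it is not merely a detour. For the diagonal case, the paper works on the integral side, using $\mu_{m_0}(-t)=(-1)^{m_0-1}\mu_{m_0}(t)$ to compare the contributions of $[x_3,R]$ and $[-R,x_4]$; your version, based on the oddness $Y_0(-x)=-Y_0(x)$ and summing the two singular contributions at $\pm x_3$, is the singularity-analysis counterpart and gives the same factor $1+(-1)^{n_0+m_0+k}$. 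One small remark: your ``monodromy argument'' for $\sqrt{d(-x)}=-\sqrt{d(x)}$ is really the statement that $-Y_0(-x)$ is again a root of $Q(x,\cdot)$ (since $Q(-x,-y)=Q(x,y)$ in this regime) and matches $Y_0(x)$ by the modulus characterization $|Y_0|\le|Y_1|$; phrasing it this way avoids any ambiguity about branch choices.
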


\begin{lem}\label{lemma_int_phi}
The function $h_{2}$, defined in~(\ref{int_phi}), is holomorphic in
$\mathbb{C}\setminus (w^{-1}([x_{1},x_{2}])\setminus [x_{1},x_{2}])$ and we set
$h_{2}(x)=\sum_{k=0}^{+\infty}h_{2,k}x^{k}$. Suppose first that
$p_{11}+p_{-1-1}+p_{1-1}+p_{-11}<1$~; in addition,
     \begin{enumerate}
          \item If $\omega_{2}/\omega_{3}\in 2\mathbb{N}$, then~:
               \begin{equation}\label{asymptotic_h2k_omega23_inN}
                    h_{2,k}\sim -\frac{m_{0}x_{2}^{n_{0}}x_{3}^{1/2}}{4\sqrt{\pi}}
                    \left(\frac{d'\left(x_{2}\right)}
                    {a\left(x_{2}\right)c\left(x_{2}\right)}\right)^{1/2}
                    \left(-\frac{\text{Res}\left(w,x_{2}\right)}
                    {\text{Res}\left(w,x_{3}\right)}\right)^{1/2}
                    \left(\frac{c\left(x_{2}\right)}
                    {a\left(x_{2}\right)}\right)^{m_{0}/2}
                    \frac{1}{k^{3/2}x_{3}^{k}},\hspace{5mm}k\to \infty ,
               \end{equation}
          where for $i=2,3$,
          $\text{Res}(w,x_{i})$ denotes
          the residue of the function $w$
          at $x_{i}$, where from
          Proposition~\ref{properties_w}
          it has a pole (of order one).
     \item If $\omega_{2}/\omega_{3}\in 2\mathbb{N}+1$, then
     $h_{2,k}=o(h_{1,k})$.
     \item If $\omega_{2}/\omega_{3}\notin \mathbb{N}$,
     then the following asymptotic holds
     as $k$ goes to infinity~:
          \begin{equation}\label{asymptotic_h2k_omega23_notinN}
               h_{2,k}\sim \frac{\sqrt{x_{3}}\wp_{1,3}'\left(\omega_{2}/2\right)
               \sqrt{-f'\left(x_{3}\right)}}
               {\sqrt{\pi \left(f\left(x_{3}\right)-f\left(x_{2}\right)\right)
               \left(f\left(x_{3}\right)-f\left(x_{1}\right)\right)}}
               \left(\int_{x_{1}}^{x_{2}}\frac{w'\left(t\right)
               t^{n_{0}}\mu_{m_{0}}\left(t\right)\sqrt{-d\left(t\right)}}
               {\left(w\left(t\right)-\wp_{1,3}\left(\omega_{2}/2\right)\right)^{2}}
               \textnormal{d}t\right)\frac{1}{k^{3/2}x_{3}^{k}},
          \end{equation}
     where if $x_{4}\neq \infty$ then
     $f(t)=d''(x_{4})/6+d'(x_{4})/(t-x_{4})$ and
     if $x_{4}=\infty$ then $f(t)=(d''(0)+d'''(0)t)/6$.
\end{enumerate}
Suppose now that $p_{11}+p_{-1-1}+p_{1-1}+p_{-11}=1$~; if $k$ and
$n_{0}+m_{0}$ don't have the same parity, then $h_{2,k}=0$. If they
have the same parity and if $\omega_{2}/\omega_{3}\in 2\mathbb{N}$ 
(resp.\ $\omega_{2}/\omega_{3}\in 2\mathbb{N}+1$, $\omega_{2}/\omega_{3}\notin \mathbb{N}$),
then the asymptotic of $h_{2,k}$ is given by two
times the right member of~(\ref{asymptotic_h2k_omega23_inN}) (resp.\ is
negligible w.r.t.~(\ref{asymptotic_hk1}), is given by two times the right member
of~(\ref{asymptotic_h2k_omega23_notinN})).
\end{lem}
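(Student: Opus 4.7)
The plan is to deduce coefficient asymptotics of $h_2$ from its nearest singularity by classical singularity analysis (Darboux--Flajolet--Odlyzko transfer theorems); by Proposition~\ref{properties_w} the dominant singularity of $h_2(x)$ lies at $x_3$, and the three regimes of the lemma mirror the three possible local structures of $w$ there. For the holomorphy claim, observe from (\ref{def_phi}) that $\phi(t,x)$ is meromorphic in $x$ with polar locus inside $\{w(x)=w(t)\}$; for $x \in \mathcal{G}_{\mathcal{M}}$ the would-be pole at $x=t$ is removed by the explicit subtraction of $x/(t(t-x))$, while for $x$ outside $\mathcal{G}_{\mathcal{M}}$ the singular set is $w^{-1}([w(x_1),w(x_2)]) \setminus [x_1,x_2]$, since $[x_1,x_2]$ itself is precisely where $h_1$ and $h_2$ combine to reconstitute the entire-disc holomorphic function $h$.

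For case~(ii) $\omega_2/\omega_3 \in 2\mathbb{N}+1$, Proposition~\ref{properties_w} gives that $w$ extends holomorphically across $x_3$, hence so does $\phi(t,\cdot)$, and therefore $h_2$ is analytic in a disc of radius $> x_3$, yielding $h_{2,k} = O(\rho^{-k})$ for some $\rho>x_3$ and in particular $h_{2,k} = o(h_{1,k})$. For case~(iii) $\omega_2/\omega_3 \notin \mathbb{N}$, inserting the expansion $w(x) = w_1(x) + w_2(x)(x_3-x)^{1/2}$ with $w_1(x_3) = \wp_{1,3}(\omega_2/2)$ into the first term of $\phi$ produces
\begin{equation*}
\frac{w'(t)}{w(t)-w(x)} = \frac{w'(t)}{w(t)-\wp_{1,3}(\omega_2/2)} + \frac{w'(t)\, w_2(x_3)}{(w(t)-\wp_{1,3}(\omega_2/2))^{2}}(x_3-x)^{1/2}+\cdots
\end{equation*}
Integrating against $t^{n_0}\mu_{m_0}(t)\sqrt{-d(t)}$, substituting the explicit value of $w_2(x_3)$ from (\ref{w1_w2_x3}), and applying the transfer theorem $[x^k](x_3-x)^{1/2} \sim -x_3^{1/2-k}/(2\sqrt{\pi}\,k^{3/2})$ yield (\ref{asymptotic_h2k_omega23_notinN}).

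Case~(i) $\omega_2/\omega_3 \in 2\mathbb{N}$ is the main obstacle: $w$ has simple poles at \emph{both} $x_2$ and $x_3$, so the naive local expansion of $\phi$ around $x_3$ is regular, and the $x_3$-singularity of $h_2$ must instead be produced by a pinch of the integration contour at the endpoint $t=x_2$. Writing $w(t) \sim \text{Res}(w,x_2)/(t-x_2)$ as $t \to x_2$ and $w(x) \sim \text{Res}(w,x_3)/(x-x_3)$ as $x \to x_3$, the equation $w(t)=w(x)$ has a solution $t(x)$ with $t(x)-x_2 \sim [\text{Res}(w,x_2)/\text{Res}(w,x_3)](x-x_3)$ that migrates into the endpoint of $[x_1,x_2]$ as $x \to x_3$. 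Combined with the endpoint vanishing $\sqrt{-d(t)} \sim \sqrt{-d'(x_2)(t-x_2)}$, the substitution $t-x_2 = [\text{Res}(w,x_2)/\text{Res}(w,x_3)](x-x_3)\tau$ converts the local piece of the integral into an $x$-independent principal-value integral multiplied by $\sqrt{-\text{Res}(w,x_2)/\text{Res}(w,x_3)}\cdot(x_3-x)^{1/2}$, whence the transfer theorem delivers (\ref{asymptotic_h2k_omega23_inN}). The delicate points are the matched asymptotics at $t=x_2$ and $x=x_3$ and the sign bookkeeping that makes the ratio under the square root positive.

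Finally, when $p_{11}+p_{-1-1}+p_{1-1}+p_{-11}=1$ every jump flips the parity of both coordinates simultaneously, so the walk is supported on a bipartite sublattice and $h_k=0$ unless $k\equiv n_0+m_0\pmod 2$. On the surviving parity, the generating function $h(x)$ is an even or odd function of a rescaled variable, and standard extraction of coefficients from a series supported on one parity doubles each nonzero Taylor coefficient, producing the claimed factor of two in all three regimes.
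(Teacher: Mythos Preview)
Your treatment of cases (ii) and (iii) matches the paper's: holomorphy of $w$ at $x_3$ in case (ii) forces $h_2$ to be analytic past $x_3$, and in case (iii) the Puiseux expansion $w(x)=w_1(x)+w_2(x)(x_3-x)^{1/2}$ plugged into $\phi$ together with the standard transfer $[x^k](x_3-x)^{1/2}\sim -x_3^{1/2-k}/(2\sqrt{\pi}\,k^{3/2})$ gives exactly~(\ref{asymptotic_h2k_omega23_notinN}).

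For case (i), however, the paper takes a cleaner and genuinely different route. Because $w$ has simple poles at both $x_2$ and $x_3$, there is a holomorphic involution $\sigma$ near $[x_2,x_3]$ with $\sigma(x_3)=x_2$ and $w\circ\sigma=w$. The paper uses $\sigma$ to split $\phi$ \emph{exactly}: setting
\[
\widehat{\phi}(t,x)=\phi(t,x)-\frac{\sigma'(t)\,x}{\sigma(t)(\sigma(t)-x)},
\]
one checks that $\widehat{\phi}$ is holomorphic on $[x_1,x_2]\times\{|x|<R\}$ for some $R>x_3$, so its contribution to $h_{2,k}$ is $o(R^{-k})$. The remaining piece $g_2(x)=\frac{x}{\pi}\int_{x_1}^{x_2}\frac{\sigma'(t)\,t^{n_0}\mu_{m_0}(t)\sqrt{-d(t)}}{\sigma(t)(\sigma(t)-x)}\,dt$ becomes, after the substitution $u=\sigma(t)$, a Laplace-type integral on $[x_3,\sigma(x_1)]$ to which the method of Lemma~\ref{lemma_residue_term} applies directly; the identity $\sigma'(x_3)=\mathrm{Res}(w,x_2)/\mathrm{Res}(w,x_3)$ then yields~(\ref{asymptotic_h2k_omega23_inN}). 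Your endpoint-pinch heuristic is in the right spirit, but as written it is not a proof: the ``matched asymptotics'' and the principal-value reduction are exactly where the work lies and are not carried out. Note also that $\sigma'(x_3)<0$, so the migrating pole $t(x)$ approaches $x_2$ from the \emph{right}, outside $[x_1,x_2]$; this is not a standard contour pinch, and making the argument rigorous essentially forces you to rediscover $\sigma$.

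For the diagonal case $p_{11}+p_{-1-1}+p_{1-1}+p_{-11}=1$, the mechanism is not merely bipartite support of $h$. Here $x_4=-x_3$, so $h_2$ has a second singularity of equal modulus at $-x_3$, and one must analyze its contribution separately (via $t\mapsto -t$ and the parity $\mu_{m_0}(-t)=(-1)^{m_0-1}\mu_{m_0}(t)$). The two contributions add when $k\equiv n_0+m_0\pmod 2$ and cancel otherwise, which is what produces the factor~$2$; your one-line justification does not capture this.
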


\noindent{\it Proof of Lemma~\ref{lemma_residue_term}.} Suppose
first that $x_{4}>0$. Apply the residue theorem at infinity on the
contour $\mathcal{C}_{\epsilon }$, represented at the left of
Figure~\ref{Contours_residues}. So we obtain that for all $x$ inside the
infinite domain delimited by~$\mathcal{C}_{\epsilon }$,
     \begin{equation*}
          \frac{1}{2\pi \imath} \int_{\mathcal{C}_{\epsilon}}
          \frac{t^{n_{0}-1}Y_{0}\left(t\right)^{m_{0}}}{t-x}\textnormal{d}t=
          x^{n_{0}-1}Y_{0}\left(x\right)^{m_{0}}-
          P_{\infty}\left(x\mapsto x^{n_{0}-1}
          Y_{0}\left(x\right)^{m_{0}}\right)\left(x\right),
     \end{equation*}
where $P_{\infty}(x\mapsto x^{n_{0}-1}Y_{0}(x)^{m_{0}})$ is the
principal part at infinity of the meromorphic function at infinity
$x\mapsto x^{n_{0}-1}Y_{0}(x)^{m_{0}}$, see Proposition~\ref{explicit_Delta_zero}
for more details about the principal part at infinity
of a function. Furthermore, by definition of $\mu_{m_{0}}$,
     \begin{equation*}
          \lim_{\epsilon \to 0}\frac{1}{2\pi \imath} \int_{\mathcal{C}_{\epsilon}}
          \frac{t^{n_{0}-1}Y_{0}\left(t\right)^{m_{0}}}{t-x}\textnormal{d}t=
          -\frac{1}{\pi}\int_{x_{1}}^{x_{2}}\frac{t^{n_{0}-1}\mu_{m_{0}}\left(t\right)
          \sqrt{-d\left(t\right)}}{t-x}\textnormal{d}t+
          \frac{1}{\pi}\int_{x_{3}}^{x_{4}}\frac{t^{n_{0}-1}\mu_{m_{0}}\left(t\right)
          \sqrt{-d\left(t\right)}}{t-x}\textnormal{d}t.
     \end{equation*}
Therefore the function $h_{1}$ is just equal to~:
     \begin{equation}\label{equality_l1}
          h_{1}\left(x\right)=\frac{x}{\pi}\int_{x_{3}}^{x_{4}}
          \frac{t^{n_{0}-1}\mu_{m_{0}}\left(t\right)
          \sqrt{-d\left(t\right)}}{t-x}\textnormal{d}t+
          xP_{\infty}\left(x\mapsto x^{n_{0}-1}
          Y_{0}\left(x\right)^{m_{0}}\right)\left(x\right).
     \end{equation}
Moreover, thanks to Lemma~\ref{properties_X_Y}, we obtain that the
degree of the polynomial defined by the principal part above is
equal to $n_{0}$ if $p_{1-1}\neq 0$, $-\infty$ if $p_{1-1}=0$ and
$n_{0}\leq m_{0}$, $n_{0}-m_{0}$ if $p_{1-1}=0$ and $n_{0}>m_{0}$.
In any case, if $k$ is larger than this degree, then the following
equality holds~:
     \begin{equation}\label{h_1_k_before_asymptotic}
          h_{1,k}=\frac{1}{\pi}\int_{x_{3}}^{x_{4}}
          \frac{\mu_{m_{0}}\left(t\right)\sqrt{-d\left(t\right)}}
          {t^{k+1-n_{0}}}\textnormal{d}t.
     \end{equation}
We can then easily obtain the asymptotic of this integral as $k$ goes
to infinity, using Laplace's method, see
e.g.~\cite{Chat} page 275. We make an
expansion of the numerator of the integrand in~(\ref{h_1_k_before_asymptotic})
in the neighborhood of $x_{3}$, we obtain
$\mu_{m_{0}}(t)(-d(t))^{1/2}=\mu_{m_{0}}(x_{3})(-d'(x_{3}))^{1/2}(t-x_{3})^{1/2}
+(t-x_{3})^{3/2} f(t)$, where $f$ is holomorphic at $x_{3}$.
Classically, the second term in the previous sum
will lead to a negligible contribution,
so to get~(\ref{asymptotic_hk1}), it suffices therefore to use one hand that
     \begin{equation*}
          \int_{x_{3}}^{x_{4}} \frac{\sqrt{t-x_{3}}}{t^{k}}\text{d}t=
          \frac{\sqrt{\pi}}{2}\frac{1}{x_{3}^{k-3/2}}\frac{1}{k^{3/2}}+
          \mathcal{O}\left(\frac{1}{k^{5/2}}\right),
     \end{equation*}
and on an other hand to simplify $\mu_{m_{0}}(x_{3})$, using for this~(\ref{def_mu})
and the fact that $d(x_{3})=0$.


\begin{figure}[!ht]
\begin{picture}(30.00,135.00)
\includegraphics{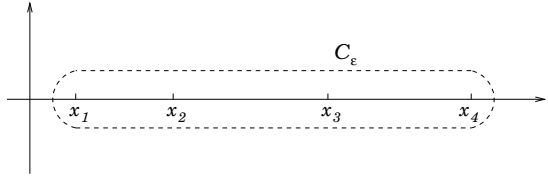}\hspace{95mm} \includegraphics{contournega2.eps}
\end{picture}
\caption{Contours of integration in the cases $x_{4}>0$ and $x_{4}<0$
respectively} 
\label{Contours_residues}
\end{figure}

Suppose now that $x_{4}<0$ and suppose in addition that
$p_{11}+p_{-1-1}+p_{1-1}+p_{-11}<1$. In this case, $Y_{0}$ is no
more meromorphic at infinity and has no principal part at infinity,
the previous argument does not run anymore. However, we will show
that the asymptotic~(\ref{asymptotic_hk1}) is still correct. To that
purpose, fix $R>-x_{4}$ and apply the classical residue theorem on
the contour $\mathcal{C}_{\epsilon,R}$ described in the right side
of Figure~\ref{Contours_residues}. After that $\epsilon$ has gone to
zero, we obtain
     \begin{equation}\label{before_def_f_R}
          x^{n_{0}-1}Y_{0}(x)^{m_{0}}=
          -\frac{1}{\pi}\int_{x_{1}}^{x_{2}}\frac{t^{n_{0}-1}
          \mu_{m_{0}}\left(t\right)\sqrt{-d\left(t\right)}}{t-x}\text{d}t+
          \frac{1}{\pi}\int_{x_{3}}^{R}\frac{t^{n_{0}-1}
          \mu_{m_{0}}\left(t\right)\sqrt{-d\left(t\right)}}
          {t-x}\text{d}t+f_{R}\left(x\right),
     \end{equation}
where $f_{R}$ is defined by~:
     \begin{equation*}
          f_{R}\left(x\right)=\frac{1}{2\pi \imath }
          \int_{\mathcal{C}\left(0,R\right)}\frac{t^{n_{0}-1}
          Y_{0}\left(t\right)^{m_{0}}}{t-x}\text{d}t+
          \frac{1}{\pi}\int_{-R}^{x_{4}}\frac{t^{n_{0}-1}\mu_{m_{0}}\left(t\right)
          \sqrt{-d\left(t\right)}}{t-x}\text{d}t,
     \end{equation*}
The first (resp.\ the second) Cauchy-type integral in the sum defining 
$f_{R}$ is holomorphic in the disc
$\mathcal{D}(0,R)$ (resp.\ $\mathcal{D}(0,-x_{4})$) so that $f_{R}$
is holomorphic in $\mathcal{D}(0,-x_{4})$. Moreover, since we have
supposed $p_{11}+p_{-1-1}+p_{1-1}+p_{-11}<1$,
Lemma~\ref{lemma_branched_points} yields $x_{3}<-x_{4}$. In
particular, this implies that, as $k$ goes to infinity, the $k$th coefficient of
the Taylor series at zero of $f_{R}$ is $o(1/r^{k})$, where
$x_{3}<r<-x_{4}$. These coefficients will be therefore
negligible w.r.t.\ those of
     \begin{equation*}
          \frac{1}{\pi}\int_{x_{3}}^{R}\frac{t^{n_{0}-1}
          \mu_{m_{0}}\left(t\right)\sqrt{-d\left(t\right)}}
          {t-x}\text{d}t.
     \end{equation*}
We calculate the asymptotic of the coefficients of the function
above using Laplace's method, as in the case $x_{4}>0$,
the asymptotic~(\ref{asymptotic_hk1}) is thus still valid.

If $p_{11}+p_{-1-1}+p_{1-1}+p_{-11}=1$ (which implies, see
Lemma~\ref{lemma_branched_points}, that $x_{4}=-x_{3}<0$), then the process
can reach $(k,0)$ if and only if $k$ and $n_{0}+m_{0}$ have the same
parity. In particular, if $k$ and $n_{0}+m_{0}$ don't have the same
parity then $h_{k}=0$. Besides, we can show that in
this case $h_{1,k}=0$,
using that $[x_{4},x_{1}]=[-x_{3},-x_{2}]$ and the fact that
$Y_{0}$, $Y_{1}$ and $w$ are odd functions. If they have the same
parity, then the asymptotic of the coefficients of $f_{R}$ is no
more negligible~: indeed, thanks to
Lemma~\ref{lemma_branched_points}, $x_{3}=-x_{4}$ and after the use
of $\mu_{m_{0}}(-t)=(-1)^{m_{0}-1}\mu_{m_{0}}(t)$,
see~(\ref{def_mu}), we obtain~:
     \begin{equation}\label{property_int_diagonal_walk}
          \frac{1}{\pi}\int_{-R}^{x_{4}}\frac{t^{n_{0}-1}
          \mu_{m_{0}}\left(t\right)\sqrt{-d\left(t\right)}}
          {t-x}\text{d}t=\frac{\left(-1\right)^{n_{0}+m_{0}}}
          {\pi}\int_{x_{3}}^{R}\frac{t^{n_{0}-1}
          \mu_{m_{0}}\left(t\right)\sqrt{-d\left(t\right)}}
          {t+x}\text{d}t.
     \end{equation}
For this reason, if $k$ and $n_{0}+m_{0}$ have the same parity, we
obtain that the asymptotic of $h_{1,k}$ is given by two
times~(\ref{asymptotic_hk1}), using once again Laplace's method.
\hfill $\square $

\medskip

\noindent{\it Proof of Lemma~\ref{lemma_int_phi}.} This proof is
based on the following principle, known as Pringsheim Theorem~:
the asymptotic of the coefficients of a Taylor series at $0$
can be found starting from the precise
knowledge of the first singularity of this Taylor series~; 
here ``the first singularity" means the singularity the nearest in modulus 
from zero. So we have to find the first singularity of the function $h_{2}$, defined
in~(\ref{int_phi}). To do this, we have to find the singularities of
the function $\phi$, defined in~(\ref{def_phi}), that appears in the
integral defining $h_{2}$. \emph{Grosso modo}, $\phi$, as a function
of two variables, can have singularities of two different kinds~:
either they are fixed in the sense that for all $t$ (resp.\ all $x$)
the function $\phi$ of the variable $x$ (resp.\ $t$) has the same
singularities, and in this case the set of all the singularities can
be written as a direct product, or they are movable in the sense
that the set of the singularities can not be written as a direct
product. In our case, according to $w$ is meromorphic or not, these
two possibilities can arise, eventually simultaneously. Indeed, if
$w$ is not meromorphic, $x_{3}$ will be a fixed singularity of
$\phi$, hence also a singularity of $h_{2}$, and $\{(t,x) \in \mathbb{C}^{2} :
w(t)=w(x)\}\setminus \{(y,y) : y\in \mathbb{C}\}$ is the set of
movable singularities of $\phi$~; in particular if we define
$\hat{x}=\inf \{x\in ]x_{2},+\infty[ : w(x)\in
w([x_{1},x_{2}])\}$, then $\inf\{x_{3},\hat{x}\}$
is the first singularity of $h_{2}$.

In our case, we will show that
if $\omega_{2}/\omega_{3}\in 2\mathbb{N}$, then $\hat{x}=x_{3}$,
and if $\omega_{2}/\omega_{3}\in 2\mathbb{N}+1$ or
$\omega_{2}/\omega_{3}\notin \mathbb{N}$, then $\hat{x}>x_{3}$.

We first suppose that $p_{11}+p_{-1-1}+p_{1-1}+p_{-11}<1$ and we
will say at the end of the proof how to adapt our arguments to the
walks having transition probabilities such that
$p_{11}+p_{-1-1}+p_{1-1}+p_{-11}=1$.

Suppose first that $\omega_{2}/\omega_{3}\in 2\mathbb{N}$. In this
case, we obtain $\lim_{x
\stackrel{>}{\to} x_{2}}w(x)=\lim_{x \stackrel{<}{\to} x_{3}}w(x)=-\infty $,
thanks to Proposition~\ref{properties_w}. More
generally, from Proposition~\ref{properties_w} and its proof we
deduce that there exists a holomorphic function $\sigma $ defined at
least in a neighborhood of $[x_{2},x_{3}]$ such that $\sigma \circ
\sigma=\text{id}$, $\sigma(x_{3})=x_{2}$ and $w\circ\sigma=w$.

In addition, we have already seen that the function $(t,x)\mapsto
w'(t)/(w(t)-w(x))-(x_{2}-x)/((x_{2}-t)(t-x))$ is holomorphic in
${\mathcal{G}_{\mathcal{M}}}^{2}$, in particular in the neighborhood
of $(x_{2},x_{2})$ and more generally in the neighborhood of
$\{(y,y) : y\in \mathcal{G}_{\mathcal{M}}\}$, but it is not
holomorphic at $(x_{2},x_{3})$ and in fact is not holomorphic at every point of
$\{(y,\sigma(y)) : y\in \mathcal{G}_{\mathcal{M}}\}$. On the other
hand, the function $(t,x)\mapsto
w'(t)/(w(t)-w(x))-(x_{2}-x)/((x_{2}-t)(t-x))
-\sigma'(t)(x_{3}-x)/((x_{3}-\sigma(t))(\sigma(t)-x))$ is
holomorphic in the neighborhood of $\{(y,\sigma(y)) : y\in
\mathcal{G}_{\mathcal{M}}\}$. This is why the function
     \begin{equation}\label{def_hat_phi}
          \widehat{\phi}\left(t,x\right)=\phi\left(t,x\right)
          -\frac{\sigma'\left(t\right)x}
          {\sigma\left(t\right)
          \left(\sigma\left(t\right)-x\right)}
     \end{equation}
is holomorphic in a neighborhood of $[x_{1},x_{2}]\times
\{x\in \mathbb{C} : |x|<R\}$ where $R>x_{3}$ and in particular for
all $t\in [x_{1},x_{2}]$, $x\mapsto \hat{\phi}(t,x)$ is holomorphic
on $\{x\in \mathbb{C} : |x|<R\}$. Therefore, the $k$th
coefficient of the Taylor series at zero of the function
     \begin{equation*}
          \frac{1}{\pi }\int_{x_{1}}^{x_{2}}
          \widehat{\phi}\left(t,x\right)\mu_{m_{0}}\left(t\right)
          t^{n_{0}}\sqrt{-d\left(t\right)}\textnormal{d}t
     \end{equation*}
is $o(1/R^{k})$ with $R>x_{3}$~; in particular it is
exponentially negligible w.r.t.\ to (\ref{asymptotic_hk1})
and (\ref{asymptotic_h2k_omega23_inN}). In other words, it remains
to evaluate the contribution of the coefficients of $g_{2}$, defined by
     \begin{equation}\label{def_l2}
          g_{2}\left(x\right)=\frac{x}{\pi }\int_{x_{1}}^{x_{2}}
          \frac{\sigma'\left(t\right)\mu_{m_{0}}\left(t\right)
          t^{n_{0}}}{\sigma\left(t\right)
          \left(\sigma\left(t\right)-x\right)}
          \sqrt{-d\left(t\right)}\textnormal{d}t=-
          \frac{x}{\pi }\int_{x_{3}}^{\sigma\left(x_{1}\right)}
          \frac{\sigma\left(t\right)^{n_{0}}
          \mu_{m_{0}}\left(\sigma\left(t\right)\right)}{t\left(t-x\right)}
          \sqrt{-d\left(\sigma\left(t\right)\right)}\textnormal{d}t,
     \end{equation}
where the second equality above comes from the change of variable
$u=\sigma(t)$. Then, we apply Laplace's method, and we will
obtain~(\ref{asymptotic_h2k_omega23_inN}) as soon as we will have
proved that
$\sigma'(x_{3})=\text{Res}(w,x_{2})/\text{Res}(w,x_{3})$. To do
this, start by differentiating the equality $w(t)=w(\sigma(t))$, we
obtain that $\sigma'(x_{3})=\lim_{x \to x_{3}}
w'(t)/w'(\sigma(t))$, what implies $\sigma'(x_{3})=\lim_{x\to x_{3}} 
\text{Res}(w,x_{3})(\sigma(t)-x_{2})^{2}/(\text{Res}(w,x_{2})(t-x_{3})^{2})$. Since
$\sigma'(x_{3})\neq0$ it follows that
$\sigma'(x_{3})=\text{Res}(w,x_{2})/\text{Res}(w,x_{3})$.

Suppose now that $\omega_{2}/\omega_{3}\in 2\mathbb{N}+1$. In this
case,  since $w$ has a pole at $x_{2}$ but is holomorphic at $x_{3}$
by Proposition~\ref{properties_w}, the function $\phi $ is
continuous on $[x_{1},x_{2}]\times \{x\in \mathbb{C} : |x|<R\}$
where $R>x_{3}$ and for all $t\in [x_{1},x_{2}]$, $x\mapsto
\phi(t,x)$ is holomorphic on $\{x\in \mathbb{C} : |x|<R\}$. This is
why the $k$th coefficient of the Taylor series at zero
of $h_{2}$ is in this case $o(1/R^{k})$, with $R>x_{3}$, that is,
in particular $h_{2,k}=o(h_{1,k})$

Consider now the general case, namely $\omega_{2}/\omega_{3}\notin
\mathbb{N}$. In  Proposition~\ref{properties_w}, we have shown that
in the neighborhood of $x_{3}$, $w$ can be written as
$w_{1}(x)+w_{2}(x)(x_{3}-x)^{1/2}$, where $w_{1}$ and $w_{2}$ are
functions holomorphic at $x_{3}$, and their values at $x_{3}$ are
made explicit in~(\ref{w1_w2_x3}). In particular, in the
neighborhood of $x_{3}$, we can write
$h_{2}(x)=f(x)+g(x)(x_{3}-x)^{1/2}$, with $f$ and $g$ holomorphic in
the neighborhood of $x_{3}$, and with~(\ref{int_phi}),
     \begin{equation}\label{value_g(x3)}
          g\left(x_{3}\right)=
          \frac{w_{2}\left(x_{3}\right)}{\pi }
          \int_{x_{1}}^{x_{2}}\frac{w'\left(t\right)}
          {\left(w\left(t\right)-w\left(x_{3}\right)\right)^{2}}
          t^{n_{0}}\mu_{m_{0}}\left(t\right)\sqrt{-d\left(t\right)}\text{d}t.
     \end{equation}

We can now easily find the asymptotic of the coefficients of the
Taylor series at $0$ of the function $h_{2}$, following 
Pringsheim Theorem, mentioned at the beginning
of the proof and summarized below~:
if $F(z)=\sum_{k}c_{k}z^{k}$ is a function (i) holomorphic
in the open disc of radius~$r$ (ii) having
a holomorphic continuation at every point
of the circle of radius~$r$ except $r$ (iii) having at $r$
an algebraic singularity in the sense
that in the neighborhood of $r$,
$F$ can be written as $F(z)=F_{0}(z)+
\sum_{i=1}^{d}F_{i}(z)(1-z/r)^{\theta_{i}}$
where the $F_{i}$, $i\geq 0$, are holomorphic functions
in the neighborhood of $r$,
not vanishing at $r$ for $i\geq 1$, the $\theta_{1}<\cdots <\theta_{d}$ are rational
but not integer, then the asymptotic of the coefficients
of the Taylor series at $0$ can easily be calculated~:
$c_{k}\sim F_{1}(r)r^{k}/(\Gamma(-\theta_{1})k^{\theta_{1}+1})$ as $k\to +\infty $, 
$\Gamma$ being the classical Gamma function.

So, using Pringsheim Theorem with $F=h_{2}$
--~thanks to Corollary~\ref{continuation_h_h_tilde}, or from the current proof,
$h_{2}$ is continuable holomorphically through every point of the circle
$\mathcal{C}(0,x_{3})$ except $x_{3}$~; this is here that we use the
hypothesis $p_{11}+p_{-1-1}+p_{1-1}+p_{-11}<1$~: indeed, under this
assumption, $h_{2}$ is holomorphic at $-x_{3}$~--,
$F_{1}=g$, $r=x_{3}$, $F_{1}(r)=g(x_{3})$ written
in~(\ref{value_g(x3)}), $\theta_{1}=1/2$ and using the
fact that $\Gamma(-1/2)=-2\sqrt{\pi}$, we get immediately the
announced asymptotic. 

Suppose now that $p_{11}+p_{-1-1}+p_{1-1}+p_{-11}=1$, and note that
for these walks, we can equally have $\omega_{2}/\omega_{3}\in
2\mathbb{N}$, $\omega_{2}/\omega_{3}\in 2\mathbb{N}+1$ or 
$\omega_{2}/\omega_{3}\notin \mathbb{N}$. In fact,
as in the proof of Lemma~\ref{lemma_residue_term}, we have, as well
as the contribution of the point $x_{3}$,  to take under account the
one of the point $x_{4}$, equal in this case to $-x_{3}$. So we do
the same analysis, but then we have in addition a term
like~(\ref{def_l2}) or~(\ref{value_g(x3)}) with $x_{4}$ instead
$x_{3}$. By doing the change of variable $t\mapsto -t$, as
in~(\ref{property_int_diagonal_walk}), we obtain that if $k$ and
$n_{0}+m_{0}$ don't have the same parity, then the contribution is
zero, and if they have, then the asymptotic of $h_{2,k}$ is given by
two times~(\ref{asymptotic_h2k_omega23_notinN}), using once again
Laplace's method and an adaptation of Pringsheim
Theorem for odd and even functions. \hfill $\square $

\begin{rem}
If $\omega_{2}/\omega_{3}\in 2\mathbb{N}+1$, then $\omega_{2}/2$
is congruent to $\omega_{3}/2$ and in particular
$\wp_{1,3}'(\omega_{2}/2)$ vanishes, what proves differently that
there is in this case a negligible contribution of the integral~(\ref{int_phi}) to
the asymptotic of the coefficients of $h$.
\end{rem}

Theorem~\ref{main_result_asymptotic_absorption_probabilities} can be
summarized as follows.

\begin{prop}\label{continuity_Martin_boundary}
The absorption probabilities $h_{i}$ admit
the following asymptotic as $i$ goes to $\infty$~:
     \begin{equation*}
          h_{i}\sim \frac{\left(-x_{3}d'\left(x_{3}\right)\right)^{1/2}}
          {4\pi^{1/2}a\left(x_{3}\right)}
          \left[ m_{0}\left(\frac{c\left(x_{3}\right)}{a\left(x_{3}\right)}
          \right)^{\left(m_{0}-1\right)/2}x_{3}^{n_{0}}
          -\widetilde{h}'\left(\left(\frac{c\left(x_{3}\right)}{a\left(x_{3}\right)}
          \right)^{1/2}\right)\right]
          \frac{1}{i^{3/2}x_{3}^{i}}.
     \end{equation*}
\end{prop}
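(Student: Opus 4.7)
The plan is to bypass the three case-by-case asymptotics of Theorem~\ref{main_result_asymptotic_absorption_probabilities} and extract the dominant singularity of $h(x)$ at $x=x_{3}$ directly from the functional equation~(\ref{functional_equation}), then apply the classical transfer theorem (Flajolet--Odlyzko singularity analysis). Substituting $y=Y_{0}(x)$ into~(\ref{functional_equation}) and using $Q(x,Y_{0}(x))=0$ together with the bound $|Y_{0}(x)|<1$ for $|x|\leq 1$, $x\neq 1$, from Lemma~\ref{properties_X_Y}(2), one first obtains
\[ h(x)+\widetilde{h}(Y_{0}(x))+h_{00}=x^{n_{0}}Y_{0}(x)^{m_{0}},\qquad |x|\leq 1. \]
By analytic continuation this identity extends to the domain $\{|x|<x_{3}\}\setminus [x_{1},x_{2}]$, where both sides are holomorphic, and in particular to a real one-sided neighborhood of $x_{3}$.

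Near $x_{3}$, Lemma~\ref{lemma_branched_points} combined with the sign of $d$ on the adjacent intervals $[x_{2},x_{3}]$ and $[x_{3},x_{4}]$ gives $d'(x_{3})<0$ and the simple branch-point expansion
\[ Y_{0}(x)=Y_{0}(x_{3})-\frac{\sqrt{-d'(x_{3})}}{2a(x_{3})}\sqrt{x_{3}-x}+O(x_{3}-x),\qquad Y_{0}(x_{3})=\Big(\tfrac{c(x_{3})}{a(x_{3})}\Big)^{1/2}, \]
the last identity coming from $b(x_{3})^{2}=4a(x_{3})c(x_{3})$. Under assumption~(M), $Y_{0}(x_{3})$ lies strictly inside the disc of convergence of $\widetilde{h}$ (i.e.\ below the branch point $y_{3}$ of the symmetric problem), so $\widetilde{h}$ admits a standard Taylor expansion at $Y_{0}(x_{3})$. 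Inserting this and the Taylor expansion of $x^{n_{0}}Y_{0}(x)^{m_{0}}$ into the functional equation yields the Puiseux decomposition $h(x)=h(x_{3})+D\sqrt{x_{3}-x}+O(x_{3}-x)$ with
\[ D=-\frac{\sqrt{-d'(x_{3})}}{2a(x_{3})}\left[m_{0}x_{3}^{n_{0}}\Big(\tfrac{c(x_{3})}{a(x_{3})}\Big)^{(m_{0}-1)/2}-\widetilde{h}'\!\Big(\big(\tfrac{c(x_{3})}{a(x_{3})}\big)^{1/2}\Big)\right]. \]

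The transfer theorem, applied to $h$ (holomorphic on $\{|x|<x_{3}\}$ with a dominant algebraic singularity of pure square-root type at $x_{3}$), then gives $h_{i}\sim D\cdot[x^{i}]\sqrt{x_{3}-x}\sim -\tfrac{D\sqrt{x_{3}}}{2\sqrt{\pi}}\,i^{-3/2}x_{3}^{-i}$, and substituting the expression for $D$ reproduces exactly the formula of the Proposition. The main obstacle is to rule out competing singular contributions on the circle $|x|=x_{3}$ and to certify that the Puiseux expansion above captures the true dominant singularity of $h$: when $p_{11}+p_{-1-1}+p_{1-1}+p_{-11}<1$, $x_{3}$ is the unique singularity of that modulus and the conclusion is immediate; when $p_{11}+p_{-1-1}+p_{1-1}+p_{-11}=1$, there is also a singularity at $-x_{3}$ of identical type, forcing a parity constraint on $i$ and an additional factor two, entirely consistent with the present formula (and with the statement of Theorem~\ref{main_result_asymptotic_absorption_probabilities}). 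Consistency with the three explicit case-by-case asymptotics of that theorem can be checked by inspection: the $h_{1,k}$ contribution of~(\ref{asymptotic_hk1}) matches the first term $m_{0}x_{3}^{n_{0}}(c(x_{3})/a(x_{3}))^{(m_{0}-1)/2}$ in the bracket, while the $h_{2,k}$ contributions (or their vanishing to leading order in the case $\omega_{2}/\omega_{3}\in 2\mathbb{N}+1$) encode the term $-\widetilde{h}'((c(x_{3})/a(x_{3}))^{1/2})$ via the analogous singularity analysis for $\widetilde{h}$.
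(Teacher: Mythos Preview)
Your proof is correct and takes a genuinely different, more elementary route than the paper.

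The paper proves the Proposition by going \emph{through} Theorem~\ref{main_result_asymptotic_absorption_probabilities}: it starts from the explicit integral representation~(\ref{new_integral_form_h}) of $h$, combines it with the relation~(\ref{relation_h_h_tilde}) to obtain an integral formula for $\widetilde{h}'(Y_{0}(x_{3}))$, and then recognizes that integral as precisely the constant appearing in~(\ref{asymptotic_h2k_omega23_notinN}) (or its analogues in the other two cases). In other words, the paper's argument is an \emph{identification} of the constants already computed case by case in Lemmas~\ref{lemma_residue_term} and~\ref{lemma_int_phi}, tying the three-case answer into a single expression involving $\widetilde{h}'$.

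Your approach bypasses all of this: you read the Puiseux expansion of $h$ at $x_{3}$ directly off the functional relation~(\ref{relation_h_h_tilde}) and the square-root expansion of $Y_{0}$ at the branch point, then invoke Flajolet--Odlyzko transfer. This is cleaner, avoids the trichotomy on $\omega_{2}/\omega_{3}$ entirely, and makes the unified form of the constant transparent. The one step you assert without proof---that $Y_{0}(x_{3})\in]y_{2},y_{3}[$ so that $\widetilde{h}$ is analytic there---is correct (it follows from the strict convexity of the level set $\phi(u,v)=1$, equivalently from the monotonicity of $s_{y}$ discussed around the notation $s_{y}(\tan(\gamma))$ in Subsection~\ref{Saddle_point}), but deserves a sentence of justification since the argument collapses if $Y_{0}(x_{3})=y_{3}$.

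What the paper's route buys, and yours does not, is the explicit link between the quantity $\widetilde{h}'\big((c(x_{3})/a(x_{3}))^{1/2}\big)$ and the concrete integrals of Theorem~\ref{main_result_asymptotic_absorption_probabilities}; this identification is what makes Remark~\ref{expl} possible. Your argument produces the asymptotic in closed form but leaves those integral expressions as separate facts.
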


\begin{proof}
We chose to do the proof in the general case, namely
$\omega_{2}/\omega_{3}\notin \mathbb{N}$, since the ideas of the
proof in the
cases $\omega_{2}/\omega_{3}\in 2\mathbb{N}$
and $\omega_{2}/\omega_{3}\in 2\mathbb{N}+1$
are rather similar.

We will need the following
consequence of Proposition~\ref{continuation_h_h_tilde_covering} and
Corollary~\ref{continuation_h_h_tilde}~: for all $x\in
\mathbb{C}\setminus [x_{3},x_{4}]$,
     \begin{equation}\label{relation_h_h_tilde}
          h\left(x\right)=x^{n_{0}}Y_{0}\left(x\right)^{m_{0}}
          -\widetilde{h}\left(Y_{0}\left(x\right)\right)-h_{00}.
     \end{equation}
Then, use one hand the explicit expression
of $h$, given in~(\ref{new_integral_form_h})
of Theorem~\ref{explicit_h(x)_second},
and on the other hand~(\ref{relation_h_h_tilde})
just above. We obtain~:
     \begin{equation*}
          \widetilde{h}\left(Y_{0}\left(x\right)\right)+h_{00}=-
          \frac{1}{\pi }\int_{x_{1}}^{x_{2}}
          t^{n_{0}}\mu_{m_{0}}\left(t\right)
          \left(\frac{w'\left(t\right)}{w\left(t\right)-w\left(x\right)}-
          \frac{w'\left(t\right)}{w\left(t\right)-w\left(0\right)}\right)
          \sqrt{-d\left(t\right)}\text{d}t.
     \end{equation*}
Differentiating this equality w.r.t.\ $x$, we get~:
     \begin{equation*}
           \widetilde{h}'\left(Y_{0}\left(x\right)\right)=-
           \frac{w'\left(x\right)}{Y_{0}'\left(x\right)}
           \frac{1}{\pi }\int_{x_{1}}^{x_{2}}
           t^{n_{0}}\mu_{m_{0}}\left(t\right)
           \frac{w'\left(t\right)}
           {\left(w\left(t\right)-w\left(x\right)\right)^{2}}
           \sqrt{-d\left(t\right)}\text{d}t.
     \end{equation*}
Since we have supposed $\omega_{2}/\omega_{3}\notin \mathbb{N}$,
we know from Proposition~\ref{properties_w}
that $w$ has at $x_{3}$ an algebraic singularity, so that
we can write $w$ as the sum
$w(x)=w_{1}(x)+w_{2}(x)(x_{3}-x)^{1/2}$,
where $w_{1}$ and $w_{2}$ are holomorphic at $x_{3}$,
moreover the values of
$w_{1}(x_{3})$ and $w_{2}(x_{3})$ are given
in~(\ref{w1_w2_x3}).
In addition, using the explicit expression of $Y_{0}$,
we obtain that $d(x)^{1/2}Y_{0}'(x)$ goes to
$-d'(x_{3})/(4a(x_{3}))$ as $x$ goes to $x_{3}$.
Also $(x_{3}-x)^{1/2}w'(x)$ goes to $-w_{2}(x_{3})/2$ as
$x$ goes to $x_{3}$. So we obtain~:
     \begin{equation*}
          \widetilde{h}'\left(Y_{0}\left(x_{3}\right)\right)=
          \widetilde{h}'\left(\left(\frac{c\left(x_{3}\right)}{a\left(x_{3}\right)}
          \right)^{1/2}\right)=
          \frac{2a\left(x_{3}\right)w_{2}\left(x_{3}\right)}
          {\left(-d'\left(x_{3}\right)\right)^{1/2}}
          \frac{1}{\pi }\int_{x_{1}}^{x_{2}}
          t^{n_{0}}\mu_{m_{0}}\left(t\right)
          \frac{w'\left(t\right)}
          {\left(w\left(t\right)-w\left(x_{3}\right)\right)^{2}}
          \sqrt{-d\left(t\right)}\text{d}t.
     \end{equation*}
With this last equality,~(\ref{asymptotic_hk1})
and~(\ref{asymptotic_h2k_omega23_notinN}), we have proved
Proposition~\ref{continuity_Martin_boundary}.
\end{proof}

We are now ready to prove Proposition~\ref{explicit_Delta_zero},
where we have given an explicit expression of $h$ in the particular
case $\Delta=0$ and $x_{4}>0$.

For this we need two preliminary results, stated
in Lemmas~\ref{Delta_zero_q_q1_q2} and~\ref{x1_x2_x3_x4_Delta_0}.
Before, recall that in~\cite{FIM}, the authors
find the explicit expression of the curve
$\mathcal{M}$, defined in~(\ref{def_curves_L_M})~; more precisely they
make explicit $q$, $q_{1}$ and $q_{2}$, three polynomials of
degree two, such that
$\mathcal{M}$ is equal to $\{u+i v \in \mathbb{C} :
q(u,v)^{2}-q_{1}(u,v)q_{2}(u,v)=0\}$.
In~\cite{Ras} is observed that these polynomials
can be written as~:
     \begin{equation}\label{def_q_q1_q2}
          \left|\begin{array}{ccc}
          p_{11}&1&p_{1-1}\\
          p_{01}&-2u&p_{0-1}\\
          p_{-11}&u^{2}+v^{2}&p_{-1-1}
          \end{array}\right|,\hspace{8mm}
          \left|\begin{array}{ccc}
          1&p_{10}&p_{1-1}\\
          -2u&-1&p_{0-1}\\
          u^{2}+v^{2}&p_{-10}&p_{-1-1}
          \end{array}\right|,\hspace{8mm}
          \left|\begin{array}{ccc}
          p_{11}&p_{10}&1\\
          p_{01}&-1&-2u\\
          p_{-11}&p_{-10}&u^{2}+v^{2}
          \end{array}\right|.
     \end{equation}

\begin{lem}\label{Delta_zero_q_q1_q2}
If $\Delta=0$, then there exist $\alpha\in \mathbb{R}$ and
$\beta \in \mathbb{R}\setminus \{0\}$ such that $q=\alpha q_{1}$
and $q_{2}=\beta q_{1}$.
\end{lem}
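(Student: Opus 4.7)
My plan is built on recognizing a common algebraic structure in the three polynomials. Writing $A=(p_{11},p_{01},p_{-11})^{\top}$, $B=(p_{10},-1,p_{-10})^{\top}$, $C=(p_{1-1},p_{0-1},p_{-1-1})^{\top}$ for the three columns whose determinant is $\Delta$, and setting $V=V(u,v)=(1,-2u,u^{2}+v^{2})^{\top}$, the formulas~(\ref{def_q_q1_q2}) can be read as
\begin{equation*}
q=\det(A,V,C),\qquad q_{1}=\det(V,B,C),\qquad q_{2}=\det(A,B,V),
\end{equation*}
so $q$, $q_{1}$ and $q_{2}$ are the three $3\times 3$ determinants obtained from $\Delta=\det(A,B,C)$ by successively replacing one column by $V$.

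The hypothesis $\Delta=0$ is precisely the statement that $A$, $B$ and $C$ are linearly dependent. In the generic situation where $A$ and $C$ remain linearly independent, this dependence may be normalized as $B=\alpha'A+\gamma C$ with uniquely determined $\alpha',\gamma\in\mathbb{R}$. Plugging this into $q_{1}$ and $q_{2}$, multilinearity annihilates the terms $\det(V,C,C)$ and $\det(A,A,V)$, and a single column transposition brings each remaining determinant back to $q$:
\begin{equation*}
q_{1}=\alpha'\det(V,A,C)=-\alpha'\,q,\qquad q_{2}=\gamma\det(A,C,V)=-\gamma\,q.
\end{equation*}
Hence $q$, $q_{1}$ and $q_{2}$ are pairwise proportional, and setting $\alpha=-1/\alpha'$ and $\beta=\gamma/\alpha'$ yields exactly the identities $q=\alpha q_{1}$ and $q_{2}=\beta q_{1}$ claimed in the lemma.

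The main obstacle, and essentially the only content beyond the one-line multilinearity computation, is to rule out the degenerate configurations left aside by the generic discussion: the case $A\parallel C$ (for which the decomposition of $B$ on $A$ and $C$ may be non-unique or unavailable), the case $\alpha'=0$ (in which $B$ is proportional to $C$), and the case $\gamma=0$ (in which $B$ is proportional to $A$). Because the middle coordinate of $B$ equals $-1$, each such collinearity forces four of the eight boundary-adjacent probabilities to vanish simultaneously. I would go through these configurations one by one, using the hypothesis (H4) of no three consecutive zeros in the cyclic list of the $p_{ij}$, combined with the positive-drift assumption (M), and exploiting the symmetry $p_{ij}\leftrightarrow p_{ji}$ that swaps the roles of $q_{1}$ and $q_{2}$. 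Once these patterns are dispatched one has $\alpha'\neq 0$ (so $\alpha$ is finite) and $\gamma\neq 0$ (so $\beta\in\mathbb{R}\setminus\{0\}$), which completes the proof.
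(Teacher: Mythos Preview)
Your multilinearity argument is genuinely different from the paper's and more conceptual. The paper writes out $q$, $q_{1}$, $q_{2}$ explicitly as quadratics in $(u^{2}+v^{2})$ and $u$, checks by direct computation that the ratios of corresponding coefficients of $q_{1}$ and $q_{2}$ coincide when $\Delta=0$ (hence $q_{2}=\beta q_{1}$ with $\beta\neq 0$), and then invokes the Cramer-type identity $q=p_{01}q_{1}+p_{0-1}q_{2}+2\Delta u$ (the second component of $\Delta\cdot V=q_{1}A+qB+q_{2}C$) to get $q=\alpha q_{1}$. Your route replaces both steps by the single observation that a linear dependence $B=\alpha'A+\gamma C$ collapses all three determinants to multiples of $\det(A,V,C)$.

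There is, however, a real gap in your treatment of the degenerate cases: hypotheses (H4) and (M) do \emph{not} dispatch $\alpha'=0$ or $\gamma=0$. Take $p_{11}=\tfrac12$, $p_{01}=p_{-11}=\tfrac{1}{10}$, $p_{0-1}=\tfrac{3}{10}$ and all other $p_{ij}=0$. Then $B=(0,-1,0)^{\top}$ is proportional to $C=(0,p_{0-1},0)^{\top}$, so $\alpha'=0$; yet the cyclic list has no three consecutive zeros and $M_{x}=M_{y}=\tfrac{2}{5}>0$. Here $q_{1}\equiv 0$ while $q\not\equiv 0$, so no real $\alpha$ gives $q=\alpha q_{1}$. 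A mirror example with $B\parallel A$ kills $\beta\neq 0$. (The paper's own proof has the parallel weakness: it asserts that the leading coefficient $\eta_{q_{1},2}=p_{10}p_{0-1}+p_{1-1}$ is strictly positive, which vanishes in the same example.) So the defect sits in the lemma's statement rather than in your strategy; what survives in all cases, and is what the application to Proposition~\ref{explicit_Delta_zero} actually needs, is that $q^{2}-q_{1}q_{2}$ is a perfect square of a quadratic (indeed, the Cramer identity with $\Delta=0$ gives $q=p_{01}q_{1}+p_{0-1}q_{2}$, and in each degenerate case one of $q_{1},q_{2}$ vanishes and the other is proportional to $q$), so that $\mathcal{M}$ is a circle.
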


\begin{proof}
Take the following notations~:
$q(u,v)=\eta_{q,2}(u^{2}+v^{2})+\eta_{q,1} u + \eta_{q,0}$,
$q_{1}(u,v)=-\eta_{q_{1},2} (u^{2}+v^{2})+\eta_{q_{1},1} u + \eta_{q_{1},0}$,
$q_{2}(u,v)=\eta_{q_{2},2}(u^{2}+v^{2})+\eta_{q_{2},1}u - \eta_{q_{2},0}$.
Of course,
we get the explicit expression of these coefficients
by expanding the determinants~(\ref{def_q_q1_q2}).
In particular, we immediately
notice that $\eta_{q_{1},2}$,
$\eta_{q_{1},0}$, $\eta_{q_{2},2}$ and
$\eta_{q_{2},0}$ are positive. Also,
from straightforward calculations we obtain
     \begin{equation*}
          \frac{\eta_{q_{1},0}}{\eta_{q_{1},2}}-
          \frac{\eta_{q_{2},0}}{\eta_{q_{2},2}}=
          \frac{\Delta }{\eta_{q_{1},2}\eta_{q_{2},2}},
          \hspace{10mm}
          \frac{\eta_{q_{1},1}}{\eta_{q_{1},2}}-
          \frac{\eta_{q_{2},1}}{\eta_{q_{2},2}}=
          \frac{2p_{10}\Delta }{\eta_{q_{1},2}\eta_{q_{2},2}}.
     \end{equation*}
This immediately yields that if $\Delta=0$ then
there exists $\beta\neq 0$ such that
$q_{2}=\beta q_{1}$.

In turn, this fact and the equality
$q(u,v)=p_{01}q_{2}(u,v)+p_{0-1}q_{1}(u,v)+\Delta u$,
which is a Cramer relationship, consequence of~(\ref{def_q_q1_q2}),
entail that there exists $\alpha$, eventually zero, such that
$q=\alpha q_{1}$.
\end{proof}

Let $l_{1}<0$ and $l_{2}>0$ be the two roots of $q_{1}(x,0)$.
Then, thanks to Lemma~\ref{Delta_zero_q_q1_q2},
$\mathcal{M}$ is simply the circle of center
$\gamma=(l_{2}+l_{1})/2$ and radius $\rho=(l_{2}-l_{1})/2$.

The following result, that is a generalization of a straightforward result
concerning the walks verifying $p_{10}+p_{-10}+p_{01}+p_{0-1}=1$,
will also be useful in the proof of
Proposition~\ref{explicit_Delta_zero}.

\begin{lem}\label{x1_x2_x3_x4_Delta_0}
Suppose here that $\Delta=0$
and denote by $l_{1}<0$ and $l_{2}>0$ the two roots of $q_{1}(x,0)$,
$\gamma=(l_{2}+l_{1})/2$ and $\rho=(l_{2}-l_{1})/2$.
Then $(x_{2}-\gamma)(x_{3}-\gamma)=(x_{1}-\gamma)(x_{4}-\gamma)=\rho^{2}$.
\end{lem}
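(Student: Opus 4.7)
The plan is to convert the circularity of $\mathcal{M}$ (given by Lemma~\ref{Delta_zero_q_q1_q2}) into an algebraic identity on the coefficients of $\tilde a,\tilde b,\tilde c$, exploit the symmetric identity coming from $\mathcal{L}$ to factor the discriminant $d(x)$, and then evaluate this factorization at $x=\gamma$.

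Starting from $\mathcal{M}=X_{0}([y_{1},y_{2}])$ and the fact that on this cut $X_{0}(y),X_{1}(y)$ are complex conjugate (since $\tilde d(y)<0$ there), the equation $|x-\gamma|=\rho$ of $\mathcal{M}$ translates into $(X_{0}(y)-\gamma)(X_{1}(y)-\gamma)=\rho^{2}$ for every $y\in[y_{1},y_{2}]$. By Vieta applied to the quadratic $\tilde a(y)x^{2}+\tilde b(y)x+\tilde c(y)=0$, the left-hand side equals $[\tilde c(y)+\gamma\tilde b(y)+\gamma^{2}\tilde a(y)]/\tilde a(y)$, and analytic continuation in $y$ yields the polynomial identity
\[\tilde c(y)+\gamma\tilde b(y)+l_{1}l_{2}\tilde a(y)\equiv 0,\]
after using $\gamma^{2}-\rho^{2}=l_{1}l_{2}$. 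Reading off the coefficients of $y^{0},y^{1},y^{2}$ I would obtain three scalar equations, among which $p_{-10}=\gamma-l_{1}l_{2}p_{10}$ and $p_{-11}=-\gamma p_{01}-l_{1}l_{2}p_{11}$ are the ones used below.

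By the same argument applied in the $x$-variable to the curve $\mathcal{L}=Y_{0}([x_{1},x_{2}])$, which is also a circle since $\Delta=0$ is invariant under the transposition $p_{i,j}\mapsto p_{j,i}$, I would obtain $c(x)+\tilde\gamma b(x)+\tilde l_{1}\tilde l_{2}a(x)\equiv 0$ with $\tilde\gamma,\tilde\rho,\tilde l_{1},\tilde l_{2}$ the analogues of $\gamma,\rho,l_{1},l_{2}$ for $\mathcal{L}$. Substituting $c=-\tilde\gamma b-\tilde l_{1}\tilde l_{2}a$ into $d=b^{2}-4ac$ and completing the square yields the factorization
\[d(x)=\bigl(b(x)+2\tilde l_{1}a(x)\bigr)\bigl(b(x)+2\tilde l_{2}a(x)\bigr).\]
Using the two coefficient relations from the previous paragraph, a direct computation gives $a(\gamma)=p_{11}\rho^{2}$ and $b(\gamma)=p_{10}\rho^{2}$, so each quadratic factor evaluated at $\gamma$ equals $\rho^{2}$ times its leading coefficient $p_{10}+2\tilde l_{i}p_{11}$. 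Hence for the two roots $(r,r')$ of either factor, $(r-\gamma)(r'-\gamma)=\rho^{2}$.

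The main obstacle is to match the two root-pairs produced by the factorization with the specific pairing $\{x_{1},x_{4}\}$ and $\{x_{2},x_{3}\}$ claimed in the lemma (rather than $\{x_{1},x_{3}\}$ and $\{x_{2},x_{4}\}$). By the previous paragraph each pair forms an orbit of the M\"obius involution $\iota(x)=\gamma+\rho^{2}/(x-\gamma)$, which fixes $\mathcal{M}$ pointwise and swaps $\mathcal{G}_{\mathcal{M}}$ with its exterior. Since $[x_{1},x_{2}]\subset\mathcal{G}_{\mathcal{M}}$ whereas $[x_{3},x_{4}]\cap\mathcal{G}_{\mathcal{M}}=\emptyset$, each orbit contains exactly one element from each cut; and since $\iota$ is order-reversing on each connected component of $\mathbb{R}\setminus\{\gamma\}$, a short case analysis based on the position of $x_{4}$ relative to $0$ (and on the symmetric sub-case $p_{11}+p_{-1-1}+p_{1-1}+p_{-11}=1$, where $\gamma=0$) forces the outer pair $\{x_{1},x_{4}\}$ and the inner pair $\{x_{2},x_{3}\}$, finishing the proof.
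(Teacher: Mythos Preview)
Your proof is correct and follows a genuinely different route. The paper works on the $y$-side: the four values $Y(x_i)$ are roots of $\tilde P_l(y)=\tilde q(y,0)^2-\tilde q_1(y,0)\tilde q_2(y,0)$, which under $\Delta=0$ becomes a multiple of $\tilde q_1(y,0)^2$, forcing $c(x_2)/a(x_2)=c(x_3)/a(x_3)$; the bilinear relation equivalent to this equality is then matched against the root--coefficient relations for $l_1,l_2$, with a separate sub-case when $q(x,0)$ degenerates. You instead extract the identity $\tilde c+\gamma\tilde b+l_1l_2\tilde a\equiv 0$ directly from the circle equation of $\mathcal{M}$, use its dual to factor $d=(b+2\tilde l_1 a)(b+2\tilde l_2 a)$, and evaluate at $\gamma$ --- a more structural argument that produces the factorization of $d$ as a by-product and avoids the paper's case split. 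Two minor points: the involution $\iota$ preserves $\mathcal{M}$ only setwise, not pointwise (on the circle it acts as complex conjugation); and your closing pairing paragraph can be shortcut by noting that a branch point $x_k$ is a root of $b+2\tilde l_i a$ precisely when $Y(x_k)=-b(x_k)/(2a(x_k))=\tilde l_i$. Since $Y(x_2),Y(x_3)>0$ (for instance by continuity from $Y_1(1)=1$ and $Y_0Y_1=c/a>0$ on $(x_2,x_3)$) while $\tilde l_1<0<\tilde l_2$, the factor $b+2\tilde l_2 a$ carries $\{x_2,x_3\}$ and the other carries $\{x_1,x_4\}$, with no case analysis needed.
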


\begin{proof}
Define $\tilde{l}_{i}=Y(x_{i})=\epsilon_{i}(|c(x_{i})/a(x_{i})|)^{1/2}$,
$i\in\{1,\ldots,4\}$, with
$\epsilon_{2}=\epsilon_{3}=-\epsilon_{1}=-\epsilon_{4}=1$. From~(\ref{P_l}) of
Subsection~\ref{Saddle_point}, it can be deduced
that they are the four roots of
$\tilde{P}_{l}(y)=\tilde{q}(y,0)^{2}-\tilde{q}_{1}(y,0)\tilde{q}_{2}(y,0)$,
the polynomial $\tilde{q}$ (resp.\ $\tilde{q}_{1}$,
$\tilde{q}_{2}$) being obtained by making in $q$ 
(resp.\ $q_{1}$, $q_{2}$), defined in~(\ref{def_q_q1_q2}),
the change of parameters $p_{i j}\mapsto
p_{j i}$. Of course, we could prove an analogous of
Lemma~\ref{Delta_zero_q_q1_q2} that would be~: if $\Delta=0$ then
$\tilde{P}_{l}$ has two double roots~; in particular
$\tilde{l}_{2}=\tilde{l}_{3}$, so that, $a$ and $c$ being positive
for positive values of the argument,
$c(x_{2})/a(x_{2})=c(x_{3})/a(x_{3})$. In addition, by a direct
calculation, we notice that $c(x)/a(x)=c(y)/a(y)$ if and only if $x$
and $y$  are joined together by~:
     \begin{equation*}
          \left(p_{1-1}p_{01}-p_{0-1}p_{11}\right) x y+
          \left(p_{1-1}p_{-11}-p_{-1-1}p_{11}\right)
          \left(x+y\right)+\left(p_{0-1}p_{-11}-p_{-1-1}p_{01}\right)=0.
     \end{equation*}

$\bullet $ If $p_{1-1}p_{01}-p_{0-1}p_{11}\neq 0$, then the
polynomial $q(x,0)$ is non zero and thanks to Lemma~\ref{Delta_zero_q_q1_q2},
$l_{1}$ and $l_{2}$ are the two roots of $q(x,0)$~; in other
words they are the roots of $(p_{1-1}p_{01}-p_{0-1}p_{11})
x^{2}+2(p_{1-1}p_{-11}-p_{-1-1}p_{11})x
+(p_{0-1}p_{-11}-p_{-1-1}p_{01})$. Therefore, using the root-coefficient 
relationships, we have
$x_{2}x_{3}-(x_{2}+x_{3})(l_{1}+l_{2})/2+l_{1}l_{2}=0$, what exactly
means that $(x_{2}-\gamma)(x_{3}-\gamma)=\rho^{2}$.

$\bullet$ If $p_{1-1}p_{01}-p_{0-1}p_{11}=0$ then $q(x,0)$ is the null
polynomial. Indeed, in this case, $\deg(q)\leq 1$~; in addition,
thanks to Lemma~\ref{Delta_zero_q_q1_q2}, there exists $\beta\in \mathbb{R}$
such that $q=\beta q_{1}$. Since $\deg(q_{1})=2$, $\beta=0$ and $q$ is the 
null polynomial. In particular, we have,
for all $x$ and $y$, $c(x)/a(x)=c(y)/a(y)$ which leads to the
equality $a=\delta c$, where $\delta$ is some positive constant. In
such cases, $x_{2}$ and $x_{3}$ are easily calculated since they are
the two roots of $-b(x)-2(a(x)c(x))^{1/2}$, which is a polynomial of degree
two. So, it suffices to calculate explicitly $x_{2}$, $x_{3}$, also
$l_{1}$, $l_{2}$, and to notice that $(x_{2}-\gamma)(x_{3}-\gamma)=\rho^{2}$.

With the same arguments, we show that $(x_{1}-\gamma)(x_{4}-\gamma)=\rho^{2}$.
\end{proof}

\medskip

\noindent{\it Proof of Proposition~\ref{explicit_Delta_zero}.}
Proposition~\ref{explicit_Delta_zero} will result from the three
following facts.

(1) First, since $\mathcal{M}$ is a circle of center
    $\gamma$ and radius $\rho$ and thanks to Lemma~\ref{x1_x2_x3_x4_Delta_0},
    we easily verify that the function $w(t)=(x_{2}-\gamma)/(t-x_{2})-(x_{3}-\gamma)/(t-x_{3})$
    is a suitable CGF for the curve $\mathcal{M}$. In particular, the function
    $\sigma(t)=\gamma+\rho^{2}/(t-\gamma)$, defined in
    Proposition~\ref{explicit_Delta_zero}, lets
    $w$ invariant~; also,
    $\sigma'(x_{3})=\text{Res}(w,x_{2})/\text{Res}(w,x_{3})=
    -(x_{2}-\gamma)/(x_{3}-\gamma)$.

(2) Then, the key point is that since $\Delta=0$, the function $\hat{\phi}$
    defined in~(\ref{def_hat_phi}) is equal to zero. So,
    $h(x)$ is equal to the sum $h_{1}(x)+g_{2}(x)$, $h_{1}$ being
    defined in~(\ref{equality_l1}) and $g_{2}$ in~(\ref{def_l2}).

(3) Moreover, once again thanks to Lemma~\ref{x1_x2_x3_x4_Delta_0},
    $\sigma(x_{1})=x_{4}$, so that the integral~(\ref{def_l2})
    is an integral between $x_{3}$ and $x_{4}$.
    Moreover, by a direct calculation, we can show that if $P$
    stands for $a$, $b$ or $c$, then $P(\sigma(t))=(\rho/(t-\gamma))^{2}P(t)$.
    In particular, $d(\sigma(t))=(\rho/(t-\gamma))^{4}d(t)$ and
    $\mu_{m_{0}}(\sigma(t))=((t-\gamma)\rho)^{2}P(t)$.
    
Bringing together all these facts, we obtain~(\ref{explicit_h(x)_Delta=zero}).
\hfill $\square $

\section{Asymptotic of the Green functions}
\label{Green_functions_Martin_boundary}
In this section we find the
asymptotic of the Green functions
$G_{i,j}^{n_{0},m_{0}}=\mathbb{E}_{n_{0},m_{0}}
\big[\sum_{n\geq 0}1_{\{(X(n),Y(n))=(i,j)\}}\big]$ if $i,j>0$ and $j/i \to
\tan(\gamma)$ where $\gamma \in [0, \pi/2]$.

\subsection{Case $\gamma \in ]0,\pi/2[$}
\label{Saddle_point}

As it has been said in the introduction,
in the case $\gamma \in ]0, \pi/2[$ the procedure is essentially
the same as in~\cite{KV}
and~\cite{Ma1}, we just outline some details that are different.

It follows from~(\ref{functional_equation}) that
by Cauchy formula, for $\epsilon$ small enough,
     \begin{eqnarray}
          G_{i,j}^{n_{0},m_{0}}&=&\frac{1}{\left(2\pi \imath \right)^{2}}
          \int_{\left|x\right|=1-\epsilon }\frac{h\left(x\right)}{x^{i}}
          \left(\int_{\left|y\right|=1-\epsilon }\frac{\text{d}y}
          {Q\left(x,y\right)y^{j}}\right)\text{d}x\label{three_one}\\
          &+&\frac{1}{\left(2\pi \imath \right)^{2}}
          \int_{\left|y\right|=1-\epsilon}
          \frac{\widetilde{h}\left(y\right)+h_{00}}
          {y^{j}}\left(\int_{\left|x\right|=1-\epsilon}
          \frac{\text{d}x}
          {Q\left(x,y\right)x^{i}}\right)\text{d}y\label{three_two} \\
          &-&\frac{1}{\left(2\pi \imath \right)^{2}}
          \int_{\left|y\right|=1-\epsilon}
          \frac{1}
          {y^{j-m_{0}}}\left(\int_{\left|x\right|=1-\epsilon}
          \frac{\text{d}x}
          {Q\left(x,y\right)x^{i-n_{0}}}\right)\text{d}y.\label{three_three}
     \end{eqnarray}
Then we apply the residue theorem at infinity
to each inner integral above. Since
$Q(x,y)=a(x)(y-Y_{0}(x))(y-Y_{1}(x))=\tilde{a}(y)(x-X_{0}(y))(x-X_{1}(y))$,
we have to know the positions of $Y_{i}(x)$ and $X_{i}(y)$ w.r.t.\ the
circle $\mathcal{C}(0,1-\epsilon)$ when $|x|=|y|=1-\epsilon$. In
fact, we will prove that for any $x$, $y$ such that
$|x|=|y|=1-\epsilon$ and $\epsilon>0$ small enough~:
     \begin{equation}\label{location_curves_X_Y_1}
          \left|Y_{0}\left(x\right)\right|<1-\epsilon,\hspace{5mm}
          \left|Y_{1}\left(x\right)\right|>1-\epsilon,\hspace{5mm}
          \left|X_{0}\left(y\right)\right|<1-\epsilon,\hspace{5mm}
          \left|X_{1}\left(y\right)\right|<1-\epsilon.
     \end{equation}
Thanks to a proper change of parameters, it suffices of course to
prove the first two inequalities. We already know from
Lemma~\ref{properties_X_Y} that $Y_{1}(\{x\in\mathbb{C} : |x|=1\}\setminus
\{1\})\subset \{y\in\mathbb{C} : |y|>1\}$ and 
$Y_{0}(\{x\in\mathbb{C} : |x|=1\})\subset \{y \in\mathbb{C} :
|y|<1\}$. In particular, by continuity, this immediately leads to
the first inequality in~(\ref{location_curves_X_Y_1}), for
sufficiently small values of $\epsilon$. This also entails that
there exists $\theta_{0}(\epsilon)$, going to $0$ as $\epsilon$ goes
to $0$, such that for all $x=(1-\epsilon)\exp(\imath\theta)$ with
$\theta\in]\theta_{0}(\epsilon),2\pi-\theta_{0}(\epsilon)[$,
$|Y_{1}(x)|>1-\epsilon$. To conclude, it suffices to show that for
all $x=(1-\epsilon)\exp(\imath\theta)$ with
$\theta\in]-\theta_{0}(\epsilon),\theta_{0}(\epsilon)[$,
$|Y_{1}(x)|>1-\epsilon$. For this we will prove that there exists a
neighborhood of $1$, independent of $\epsilon$, where the curves
$Y_{1}(\{x\in\mathbb{C} : |x|=1-\epsilon\})$ and 
$Y_{1}(\{x\in\mathbb{C} : |x|=1\})$ don't
intersect~; then we will also show that
$Y_{1}(1-\epsilon)>Y_{1}(1)=1$. In order to show that the two above
curves don't intersect, remark that if they do, this means that
$Y_{1}(x)=Y_{1}(\hat{x})$, with some $x$, $\hat{x}$ such that
$|x|=1$, $|\hat{x}|=1-\epsilon$. This last equality is equivalent to
$\hat{x}x=\tilde{c}(Y_{1}(x))/\tilde{a}(Y_{1}(x))$. Since
$Y_{1}(1)=1$ and $\tilde{c}(1)/\tilde{a}(1)\in ]0,1[$, the previous
equality is not possible in a neighborhood of $1$ for $x$ and
$\hat{x}$. To prove that $Y_{1}(1-\epsilon)>1$, we remark that
an explicit calculation shows that $Y_{1}(x)>1$ if and only if
$a(x)+b(x)+c(x)<0$. But the polynomial $a+b+c$ goes to $\infty$ when $x\to
\pm \infty$ and has two real roots, $1$ and
$\tilde{c}(1)/\tilde{a}(1)<1$, so that $Y_{1}(1-\epsilon)>1$. 

Hence the inner integral of~(\ref{three_one}) 
(resp.\ of~(\ref{three_two}),~(\ref{three_three})) 
equals the residue at
$Y_{1}(x)$ (resp.\ at $X_{1}(y)$) with the constant $-2\pi \imath$,
the residue at infinity being zero. Then, letting $\epsilon \to 0$,
$G_{i,j}^{n_{0},m_{0}}$ is represented as the sum of the simple
integrals
     \begin{equation}\label{G_i_j_after_residue}
          G_{i,j}^{n_{0},m_{0}}=-\frac{1}{2\pi \imath }
          \int_{\left|x\right|=1} \frac{ h\left(x\right)}
          {d\left(x\right)^{1/2} x^{i} Y_{1}\left(x\right)^{j}}\text{d}x-
          \frac{1}{2\pi \imath}\int_{\left|y\right|=1}
          \frac{\widetilde{h}\left(y\right)+h_{00}-
          X_{1}\left(y\right)^{n_{0}}y^{m_{0}}}
          {\widetilde{d}\left(y\right)^{1/2} X_{1}\left(y\right)^{i}y^{j}} \text{d}y.
     \end{equation}      
These integrals are typical to apply  the saddle-point
method, see~\cite{Fed}.

To find the suitable saddle-point for
$\ln(xY_{1}(x)^{\tan(\gamma)})$ or equivalently for
$\ln(X_{1}(y)y^{\tan(\gamma)})$,
let us first have a closer look on the critical points of
$\chi_{\gamma,0}$ and $\chi_{\gamma,1}$, defined by
    \begin{equation}\label{def_chi}
          \chi_{\gamma,0}\left(x\right)= x Y_{0}\left(x\right)^{\tan\left(\gamma\right)},\hspace{5mm}
          \chi_{\gamma,1}\left(x\right)= x Y_{1}\left(x\right)^{\tan\left(\gamma\right)},\hspace{5mm}
          \gamma\in \left]0,\pi/2\right[.
     \end{equation}
The equations $\chi_{\gamma,0}'(x)=0$ and $\chi_{\gamma,1}'(x)=0$ are equivalent to
     \begin{eqnarray}
     \label{zty}
          \lefteqn{\pm d\left(x\right)^{1/2} 
          \left( a\left(x\right) c\left(x\right) -x\left( 
          a'\left(x\right)c\left(x\right)-a\left(x\right)  c'\left(x\right) \right)
          \tan\left(\gamma\right)/2\right)} \\
          &=& x \tan\left(\gamma\right)\left( a\left(x\right)  
          c\left(x\right) b'\left(x\right) - b\left(x\right) 
          \left(a'\left(x\right) c\left(x\right)  +a\left(x\right)  c'\left(x\right)
          \right)/2 \right). \nonumber
     \end{eqnarray}
Taking the square of both sides, we obtain that $P(\gamma, x)=0$,
where $P(\gamma, x)$ is the eight degree polynomial~:
   \begin{equation}\label{facto}
          P\left(\gamma,x\right)=-\left( a\left(x\right)c\left(x\right)+
          x\tan\left(\gamma \right) r\left(x\right) \right) d\left(x\right)
          +\left(x\tan(\gamma ) \right)^{2}P_l\left(x\right)
     \end{equation}
where we note
     \begin{equation}\label{P_l}
          P_{l}\left(x\right)=r\left(x\right)^{2}-r_{1}\left(x\right)r_{2}\left(x\right)=
          \lim_{\gamma \to \pi/2}
          \frac{P\left( \gamma ,x \right)}{\left(x\tan\left(\gamma\right)\right)^{2}}
     \end{equation}
and
     \begin{equation}\label{def_q_q1_q2_a_b_c_'}
          r=a c'-a' c,\hspace{8mm}
          r_{1}=b a' - b' a,\hspace{8mm}
          r_{2}=c b' - c' b.
     \end{equation}
Note that with the notations~(\ref{def_q_q1_q2}),
$r(x)=q(x,0)$ and for $i=1,2$, $r_{i}(x)=q_{i}(x,0)$.

The eight roots of the polynomial $P(\gamma,x)$
are the four critical points of
$\chi_{\gamma,0}(x)$ and those four of $\chi_{\gamma,1}(x)$,
$\gamma \in ]0, \pi/2[$.
It is immediate that in the limiting case $\gamma=0$
its roots are the branch points $x_{i}$, $i\in\{1,\ldots ,4\}$ and the roots of $a$ and $c$.
If $\gamma=\pi/2$, two of its roots are $0$,
two equal $\infty$ and four of them are the $X(y_{i})$,
$i\in \{1,\ldots ,4\}$, that are roots of $P_{l}(x)$.

Note that under the restricted hypothesis (H2')
the critical points can be made explicit.  The polynomial~(\ref{facto}) equals
$(\tan(\gamma)^{2}-1)x^{2}P_{1,4}(\gamma,x)P_{2,3}(\gamma,x)$,
where $P_{1,4}(\gamma, x)$ and $P_{2,3}(\gamma, x)$ are polynomials of second
degree, namely~:
     \begin{eqnarray*}
          P_{1,4}\left(\gamma,x\right)&=&p_{10}x^{2}-
          \frac{1+\left(1-(1-\tan(\gamma)^{2})
          (1-4p_{0-1}p_{01}+4p_{-10}p_{10}\tan(\gamma)^{2})\right)^{1/2}}
          {1-\tan(\gamma)^{2}}x+p_{-10},\\
          P_{2,3}\left(\gamma,x\right)&=&p_{10}x^{2}-
          \frac{1-\left(1-(1-\tan(\gamma)^{2})
          (1-4p_{0-1}p_{01}+4p_{-10}p_{10}\tan(\gamma)^{2})\right)^{1/2}}
          {1-\tan(\gamma)^{2}}x+p_{-10}.
   \end{eqnarray*}
The saddle-point for the first integral in~(\ref{G_i_j_after_residue})
is the biggest root of $P_{2,3}(\gamma,x)$.
This is the (unique) critical point of $\chi_{\gamma,1}(x)$
such that $x>0$ and $Y_{1}(x)>0$.
   In~\cite{KV} it has been characterized as the solution
   of~(\ref{uv}) below. Let us do it under (H2).
   We need to introduce the function $\phi(u,v)=\sum_{i,j}p_{i j}e^{i u}e^{j v}$
   for $(u,v) \in \mathbb{R}^{2}$.
   The equation $Q(x,y)=0$ with $x,y>0$ is equivalent
   to $\phi(u,v)=1$ with $u=\ln(x)$, $v=\ln(y)$.
   If $x>0$ is the critical point
   of $\chi_{0,\gamma}(x)$ such that $Y_{0}(x)>0$
   (resp.\ the one of $\chi_{1,\gamma}(x)$ such that $Y_{1}(x)>0$), then
   after some algebraic manipulations with $u=\ln(x)$ and $v =\ln(Y_{0}(x))$
   (resp.~$v=\ln(Y_{1}(x))$) we see that the equation~(\ref{zty}) is equivalent~to
   \begin{equation}\label{uv}
        \frac{\partial \phi\left(u,v\right)/\partial u}
        {\partial \phi\left(u,v\right)/\partial v}=\tan\left(\gamma\right).
   \end{equation}
   Then either
   \begin{equation}\label{eq_Ney_Spitzer}
          \frac{ \textnormal{grad}\left( \phi\left(u,v\right) \right)}
          {\left|\textnormal{grad}\left( \phi\left(u,v\right)\right)\right|}=
          \left( \cos\left(\gamma\right),\sin\left( \gamma \right) \right)
   \end{equation}
   or
   \begin{equation}\label{eq_Ney_Spitzer1}
          \frac{ \textnormal{grad}\left( \phi\left(u,v\right) \right)}
          {\left|\textnormal{grad}\left( \phi\left(u,v\right)\right)\right|}=
          \left( \cos\left(\gamma+\pi\right),\sin\left( \gamma+\pi \right)
          \right).
     \end{equation}
The mapping $(u,v) \mapsto \text{grad}( \phi(u,v))/|\text{grad}(\phi(u,v))|$
is a homeomorphism from $D=\{(u,v) \in \mathbb{R}^{2} : \phi(u,v)=1\}$
to the unit two-dimensional sphere, see~\cite{HE}.
Hence, for any $\gamma \in [0, \pi/2]$ there is
one solution of~(\ref{eq_Ney_Spitzer}) on $D$,
called $(u(\gamma), v(\gamma))$
and one solution of~(\ref{eq_Ney_Spitzer1}) on $D$,
called $(u(\gamma+\pi), v(\gamma+\pi))$.
Thus the positive critical point of $\chi_{\gamma,0}(x)$ with
$Y_{0}(x)>0$ and the one of $\chi_{\gamma,1}(x)$ with $Y_{1}(x)>0$
are among $e^{ u(\gamma)}$ and $e^{ u(\gamma+\pi)}$. In addition
$e^{u(\gamma)}$ (resp.~$e^{u(\gamma+\pi)}$) is critical for
$\chi_{i,\gamma}(x)$ if and only if $e^{v(\gamma)}$ (resp.\ $e^{
v(\gamma+\pi)}$) equals $Y_{i}(e^{u(\gamma)})$ (resp.\
$Y_{i}(e^{u(\gamma+\pi)})$, $i=0,1$. We verify that
$e^{v(\gamma)}=Y_{1}(e^{u(\gamma)})$ and $e^{
v(\gamma+\pi)}=Y_{0}(e^{u(\gamma+\pi)})$, so that $e^{u(\gamma)}$ is
the critical point of $\chi_{\gamma,1}(x)$ and $e^{u(\gamma+\pi)}$
is the one of $\chi_{\gamma,0}(x)$. Indeed, for $i=0,1$,
     \begin{equation}\label{Y_0_or_Y_1}
     \frac{\partial \phi}{\partial v}\left(u\left(\gamma\right),
     Y_{i}\left(e^{u(\gamma)}\right)\right)=
     \left[2Y_{i}\left(e^{u(\gamma)}\right)a\left(e^{u(\gamma)}\right)+
     b\left(e^{u(\gamma)}\right)\right]Y_{i}\left(e^{u(\gamma)}\right)^{-1}.
     \end{equation}
Moreover, on $[x_{2},x_{3}]$, $2a(x)Y_{1}(x)+b(x)=d(x)^{1/2}$ and
$2a(x)Y_{0}(x)+b(x)=-d(x)^{1/2}$, so that~(\ref{Y_0_or_Y_1}) is
negative for $Y=Y_{0}$, positive for $Y=Y_{1}$, what answers to the problem.

\noindent\textbf{Notation.}
 We put $s_{x}(\tan(\gamma))=e^{u(\gamma)}$ and
 $s_{y}(\tan(\gamma))=Y_{1}(e^{u(\gamma)})$.
\newline The mapping $\gamma \mapsto ( s_{x}(\tan(\gamma)), s_{y}(\tan(\gamma)))
$ is a homeomorphism between $[0,\pi/2]$ and
$\{(x,y)\in \mathbb{C}^{2} : x>0, y>0, Q(x,y)=0\}$. 
We note that $s_{x}(0)=x_{3}$, $s_{y}(0)=Y(x_{3})$ and
$s_x(\infty)=X(y_{3})$, $s_y(\infty)=y_{3}$. When $\gamma$ runs $[0,
\pi/2]$, $s_{x}(\tan(\gamma))$ monotonously decreases from
$x_{3}$ to $Y(x_{3})$ and $s_{y}(\tan(\gamma))$ monotonously
increases from $Y(x_{3})$ to~$y_{3}$.

Obviously the unique critical point of
$\tilde{\chi}_{\gamma,1}(y)=X_{1}(y)y^{\tan(\gamma)}$ with $y>0$
and $X_{1}(y)>0$ is $s_{y}(\tan(\gamma))$ defined above and
$X_{1}(s_{y}(\tan(\gamma))=s_{x}(\tan(\gamma))$.

\begin{thm}\label{thm_asymptotic_Green_functions_interior}
Let $\gamma \in ]0, \pi/2[$. If $j/i \to \tan(\gamma)$, then
    \begin{equation*}
        G_{i,j}^{n_{0},m_{0}} =
        \frac{s_{x}\left(
        \tan(\gamma) \right)^{n_{0}}s_{y}\left(\tan(\gamma)  \right)^{m_{0}}
        -h\left(s_{x}\left(\tan(\gamma) \right)\right)-\widetilde{h}
        \left(s_{y}\left(\tan(\gamma)
        \right)\right)-h_{00}}
        {s_x\left(j/i\right)^{i} s_y\left(j/i\right)^{j}}
        \left(\frac{C\left(\gamma\right)}{\sqrt{i}}+
        \mathcal{O}\left(\frac{1}{i \sqrt{i}}\right)\right),
   \end{equation*}
where the constant $C(\gamma)$, that does not depend on $(n_{0},m_{0})$, is equal to~:
     \begin{equation*}
          C(\gamma)=
          \left(2\pi\right)^{-1/2}\widetilde{d}\left(s_{y}
          \left(\tan\left(\gamma\right)\right)\right)^{-1}
          s_{y}\left(\tan(\gamma)\right)
          \left(-\left.\frac{\textnormal{d}^{2}}{\textnormal{d}y^{2}}
          \left\{ \frac{X_{1}\left(s_{y}\left(\tan(\gamma)\right)y\right)}{
          s_{x}\left(\tan(\gamma)\right)}
          y^{\tan\left(\gamma\right)}\right\} \right|_{y=1} \right)^{-1/2}.
     \end{equation*}
\end{thm}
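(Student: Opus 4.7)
The plan is to apply the saddle-point method to each of the two contour integrals in the representation~(\ref{G_i_j_after_residue}). Writing the first integrand as
$\bigl[h(x)/(d(x)^{1/2})\bigr]\cdot\bigl(xY_{1}(x)^{j/i}\bigr)^{-i}$
and the second as
$\bigl[(\tilde h(y)+h_{00}-X_{1}(y)^{n_{0}}y^{m_{0}})/\tilde d(y)^{1/2}\bigr]\cdot \bigl(X_{1}(y)y^{j/i}\bigr)^{-i}$,
the phases are exactly $\ln\chi_{j/i,1}(x)$ and $\ln\tilde\chi_{j/i,1}(y)$, whose unique real positive saddle points with $Y_{1},X_{1}>0$ are, by the discussion preceding the theorem, $s_{x}(j/i)$ and $s_{y}(j/i)$ respectively, with $X_{1}(s_{y}(j/i))=s_{x}(j/i)$.

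First I would deform the contour $|x|=1$ to a steepest descent path $\Gamma_{x}$ through $s_{x}(j/i)\in\,]1,x_{3}[$, keeping $\Gamma_{x}$ inside $\mathbb{C}\setminus([x_{1},x_{2}]\cup[x_{3},x_{4}])$. This is legal because by Corollary~\ref{continuation_h_h_tilde} the function $h$ is holomorphic on $\mathbb{C}\setminus[x_{3},x_{4}]$, while $d(x)^{-1/2}$ and $Y_{1}(x)^{-j}$ are holomorphic off the cuts, and no pole is crossed in the region swept out. An analogous deformation $|y|=1\leadsto\Gamma_{y}$ through $s_{y}(j/i)\in\,]1,y_{3}[$ is made for the second integral.

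Next I would apply the standard steepest descent formula (cf.~\cite{Fed}): an integral $\int_{\Gamma}f(x)\,e^{-i\Phi(x)}\,dx$ concentrated at a non-degenerate saddle $x_{0}$ satisfies $\int\sim f(x_{0})\,e^{-i\Phi(x_{0})}\sqrt{2\pi/(i\Phi''(x_{0}))}$ up to unimodular phase. Carrying this out separately for the two integrals, writing $s_{x}=s_{x}(j/i)$, $s_{y}=s_{y}(j/i)$, and using that on the algebraic curve $Y_{1}(s_{x})=s_{y}$, the first integral contributes $-h(s_{x})\cdot s_{x}^{-i}s_{y}^{-j}\cdot K_{1}(\gamma)\,i^{-1/2}$ and the second $-(\tilde h(s_{y})+h_{00}-s_{x}^{n_{0}}s_{y}^{m_{0}})\cdot s_{x}^{-i}s_{y}^{-j}\cdot K_{2}(\gamma)\,i^{-1/2}$, both up to a relative $\mathcal{O}(i^{-1})$ error. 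A routine computation using $Q(x,y)=a(x)(y-Y_{0}(x))(y-Y_{1}(x))=\tilde a(y)(x-X_{0}(y))(x-X_{1}(y))$ and the implicit function theorem applied to $Q=0$ at $(s_{x},s_{y})$ shows that the Hessians and the $d^{1/2},\tilde d^{1/2}$ prefactors match, so $K_{1}(\gamma)=K_{2}(\gamma)=C(\gamma)$ with $C(\gamma)$ as in the statement.

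Summing the two asymptotics, the numerator combines into
$-[h(s_{x})+\tilde h(s_{y})+h_{00}-s_{x}^{n_{0}}s_{y}^{m_{0}}]=s_{x}^{n_{0}}s_{y}^{m_{0}}-h(s_{x})-\tilde h(s_{y})-h_{00}$,
yielding the claimed formula. The main technical obstacle is the third step: identifying $K_{1}=K_{2}$ and expressing the common value in the clean form involving only $\tilde d$ and the second derivative of $X_{1}(s_{y}(\tan\gamma)\,y)/s_{x}(\tan\gamma)\cdot y^{\tan\gamma}$ at $y=1$. This requires transporting the saddle-point data on the $x$-side to the $y$-side using the algebraic relation $Q(x,y)=0$, together with the identities $2a(s_{x})s_{y}+b(s_{x})=d(s_{x})^{1/2}$ on $[x_{2},x_{3}]$ and its $\tilde{}$-analogue, so that the two prefactors $d(s_{x})^{-1/2}$ and $\tilde d(s_{y})^{-1/2}$ recombine consistently. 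All remaining ingredients -- uniformity in $\gamma$ away from $0$ and $\pi/2$, and the $\mathcal{O}(i^{-3/2})$ error term -- follow from the standard Watson-type bounds on the transverse contribution of the steepest descent path, exactly as in~\cite{KV} and~\cite{Ma1}.
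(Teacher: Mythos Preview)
Your proposal is correct and follows essentially the same approach as the paper: deform the two contours in~(\ref{G_i_j_after_residue}) to steepest-descent paths through the saddles $s_{x}(\tan\gamma)$ and $s_{y}(\tan\gamma)$ and apply the standard saddle-point method, with the construction of the contours and the error bounds deferred to~\cite{KV},~\cite{Ma1},~\cite{Fed}. Your sketch is in fact more explicit than the paper's own proof about the identification $K_{1}=K_{2}$; the transport identity you invoke between the $x$- and $y$-side saddle data via $Q(x,y)=0$ is exactly the content of equation~(\ref{join_s_x_'_s_y_'}) used later in the proof of Theorem~\ref{proposition_asymptotic_Green_functions_angle_zero}.
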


\begin{proof}
One appropriately shifts the contour of integration $|x|=1$ (resp.\ $|y|=1$) 
in the first term of (\ref{G_i_j_after_residue}) (resp.\ the second) to a contour 
$\Gamma_\gamma$ (resp.\ $\tilde{\Gamma}_{\gamma}$) passing through $s_{x}(\tan(\gamma))$
(resp.\ $s_{y}(\tan(\gamma))$), which is the saddle-point of order one. $\Gamma_{\gamma}$ is the
contour of steepest descent (i.e.\ $\text{Im}(x
Y_{1}(x)^{\tan(\gamma)})=0$ on it) in a neighborhood of $s_{x}(\tan(\gamma))$ 
and outside this neighborhood it remains ``higher" than 
$s_{x}(\tan(\gamma))$ in the sense of the level curves of the function
$\chi_{\gamma,1}$. The construction
of $\Gamma_{\gamma}$ is done as in~\cite{KV} and~\cite{Ma1},
therefore we omit the details. Likewise, we construct the contour $\tilde{\Gamma}_{\gamma}$.
Then by Cauchy theorem, the first
(resp.\ the second) term in~(\ref{G_i_j_after_residue}) equals the
integral over $\Gamma_\gamma$ (resp.\ over $\tilde{\Gamma}_\gamma$),
whose asymptotic is computed by the saddle-point~method.
\end{proof}


\subsection{Asymptotic of the Green functions in the cases $\gamma=0,\pi/2$}
\label{Asymptotic_Green_functions_gamma_zero}

For that purpose, we first need to know
the behavior of $s_{x}(j/i)-s_{x}(0)$ and
$s_{y}(j/i)-s_{y}(0)$ when
$j/i$ is in a neighborhood of $0$.

\begin{lem}\label{rc}
Let $P_{l}$ be the polynomial defined in~(\ref{P_l}).
As $j/i \to 0$, the following expansions hold~:
     \begin{eqnarray}\label{eq1x}
          s_{x}\left(0\right)-s_{x}\left(j/i\right) &=&
          \frac{x_{3}^{2} P_{l}\left(x_{3}\right)}
          {-d'\left(x_{3}\right)a\left(x_{3}\right)c\left(x_{3}\right)}
          (j/i)^{2}+\mathcal{O}
          \left( (j/i)^{3} \right),\\
          s_{y}\left(j/i\right)-s_{y}\left(0\right) &=&
          \frac{x_{3}P_{l}(x_{3})^{1/2}}
          {2a(x_{3})^{3/2}c(x_{3})^{1/2}}
          j/i+\mathcal{O}
          \left( (j/i)^{2} \right).\label{eq2x}
     \end{eqnarray}
\end{lem}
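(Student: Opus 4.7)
\textbf{Proof plan for Lemma~\ref{rc}.}

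The starting point is the polynomial characterization~(\ref{facto}): with $\tau = \tan(\gamma) = j/i$, the saddle point $s_x(\tau)$ satisfies
\[
P(\gamma,s_x(\tau))=-\bigl(a(s_x)c(s_x)+s_x\tau\,r(s_x)\bigr)d(s_x)+s_x^{2}\tau^{2}P_l(s_x)=0,
\]
and among the branches of $P(\gamma,\cdot)=0$ we take the one with $s_x(0)=x_3$. Since $P(0,x)=-a(x)c(x)d(x)$ has a simple zero at $x_3$ (as $d$ does, and $a(x_3),c(x_3)\neq 0$), the implicit function theorem yields an analytic branch $\tau\mapsto s_x(\tau)=x_3+\delta(\tau)$ with $\delta(0)=0$, so I may write $\delta=c_1\tau+c_2\tau^2+\cdots$.

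The first step is to identify $c_1$ and $c_2$ by expanding $P(\gamma,x_3+\delta)$ in $\tau$. The crucial point is that the coefficient of $\tau^0$ in $P(\cdot,x)$ is $-a(x)c(x)d(x)$ and the coefficient of $\tau^1$ is $-xr(x)d(x)$, \emph{both of which vanish at $x=x_3$ because of the factor $d$}. Therefore, using $d(x_3+\delta)=d'(x_3)\delta+O(\delta^2)$,
\[
0=-a(x_3)c(x_3)d'(x_3)\,\delta-x_3 r(x_3)d'(x_3)\,\delta\tau+x_3^{2}P_l(x_3)\,\tau^{2}+O(\delta^{2})+O(\delta\tau^2).
\]
Matching $\tau^1$ forces $c_1=0$, and matching $\tau^2$ gives $c_2=x_3^{2}P_l(x_3)/(a(x_3)c(x_3)d'(x_3))$, i.e.~(\ref{eq1x}). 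Since $a(x_3),c(x_3)>0$ and $d'(x_3)<0$ (because $d>0$ on $[x_2,x_3]$ and $d<0$ on $[x_3,x_4]$), the sign of $c_2$ is that of $-P_l(x_3)$. The stated monotonicity ``$s_x(\tan\gamma)$ decreases from $x_3$'' forces $c_2<0$, hence $P_l(x_3)>0$.

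For the second expansion~(\ref{eq2x}), I use $s_y(\tau)=Y_1(s_x(\tau))$ with $Y_1(x)=(-b(x)+\sqrt{d(x)})/(2a(x))$. Near $x_3$, the analytic part $-b(x)/(2a(x))$ contributes only $O(\delta)=O(\tau^2)$, whereas $\sqrt{d(x_3+\delta)}=\sqrt{d'(x_3)\delta}\,(1+O(\delta))$. Because $\delta<0$ and $d'(x_3)<0$ for $\tau>0$ small, the correct real branch is $\sqrt{-d'(x_3)}\,|\delta|^{1/2}$. Substituting $|\delta|^{1/2}=\sqrt{-c_2}\,\tau+O(\tau^2)$ with $\sqrt{-c_2}=x_3\,P_l(x_3)^{1/2}/\sqrt{-a(x_3)c(x_3)d'(x_3)}$, the $\sqrt{-d'(x_3)}$ factors cancel and one obtains the announced
\[
s_y(\tau)-s_y(0)=\frac{x_3 P_l(x_3)^{1/2}}{2a(x_3)^{3/2}c(x_3)^{1/2}}\,\tau+O(\tau^2).
\]

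The main subtlety, rather than an obstacle, is the mismatched scaling: $s_x-x_3=O(\tau^2)$ while $s_y-s_y(0)=O(\tau)$. This asymmetry is caused by $x_3$ being a branch point of $Y_1$ (since $d$ vanishes there), so that the composition $Y_1\circ s_x$ is not analytic in $\tau$ at $0$ even though $s_x$ is. Once this is acknowledged, the computation is routine Taylor expansion and the correct sign/branch bookkeeping described above.
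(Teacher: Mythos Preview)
Your proof is correct and follows essentially the same route as the paper: for~(\ref{eq1x}) both arguments exploit that $d(x_3)=0$ makes the $\tau^{0}$ and $\tau^{1}$ parts of $P(\gamma,\cdot)$ vanish at $x_3$, so the leading balance is between $-a(x_3)c(x_3)d'(x_3)\,\delta$ and $x_3^{2}P_l(x_3)\,\tau^{2}$; the paper phrases this as evaluating $P(\arctan(j/i),x_3)$ two ways (directly via~(\ref{facto}) and via the factor $(x_3-s_x(j/i))$), whereas you do an explicit Taylor match after invoking the implicit function theorem. For~(\ref{eq2x}) both proofs substitute into $s_y=Y_1(s_x)$ and use the square-root behavior of $Y_1$ at the branch point $x_3$; your added remarks on the sign of $P_l(x_3)$ and the branch choice are helpful clarifications the paper leaves implicit.
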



\begin{proof}
Start by proving~(\ref{eq1x}). One hand, using~(\ref{facto}), we
obtain that $P(\arctan(j/i),x_{3})=s_{x}(j/i)^{2} P_{l}(x_{3})$~; on
the other hand, by definition of
$P(\arctan(j/i),x)$ and also with~(\ref{facto}), we have $P(\arctan(j/i)
,x_{3})=(x_{3}-s_{x}(j/i))R(j/i)$, with
$R(0)=-d'(x_{3})a(x_{3})c(x_{3})\neq 0$. Equation~(\ref{eq1x})
follows immediately.
Then, to prove~(\ref{eq2x}), start by remarking that
$Y_{1}(x)-Y_{1}(x_{3})=b(x_{3})/(2a(x_{3}))-b(x)/(2a(x))
+d(x)^{1/2}/(2a(x))$, so that
in the neighborhood of $x_{3}$,
$Y_{1}(x)-Y_{1}(x_{3})=f(x)+(-d'(x_{3})^{1/2}/(2a(x_{3})))
(x_{3}-x)^{1/2}(1+g(x))$, with $f$ and $g$ holomorphic
in the neighborhood of $x_{3}$ where they
take the value $0$.
Moreover, for all $j/i\in [0,\infty]$,
$s_{y}(j/i)=Y_{1}(s_{x}(j/i))$, see
Subsection~\ref{Saddle_point}.
This yields $s_{y}(\gamma)-s_{y}(0)=
(-d'(x_{3}))^{1/2}/(2a(x_{3}))
(x_{3}-s_{x}(\gamma))^{1/2}+\mathcal{O}(x_{3}-s_{x}(\gamma))$.
Finally, using~(\ref{eq1x}),
we obtain~(\ref{eq2x}).
\end{proof}

\begin{thm}
\label{proposition_asymptotic_Green_functions_angle_zero} Suppose
first that $p_{11}+p_{-1-1}+p_{1-1}+p_{-11}<1$. Then the Green
functions admit the following asymptotic as $i\to \infty$, $j>0$
and $j/i\to 0$~:
     \begin{equation}\label{equation_asymptotic_Green_functions_angle_zero}
          G_{i,j}^{n_{0},m_{0}}=
          \frac{C_{0}
          \left(m_{0}s_{x}\left(0\right)^{n_{0}}s_{y}\left(0\right)^{m_{0}-1}-
          \widetilde{h}'\left(s_{y}\left(0\right)\right)\right)}
          {s_{x}\left(j/i\right)^{i}s_{y}\left(j/i\right)^{j}}
          \left(\frac{j}{i\sqrt{i}}+\mathcal{O}\left(\frac{j^{2}}{i^{2}\sqrt{i}}\right)\right),
     \end{equation}
where the constant $C_{0}$ is equal to~:
     \begin{equation}\label{constant_asymptotic_Green_functions_angle_zero}
          C_{0}=\left(\frac{2}{\pi}\right)^{1/2}
          \frac{s_{y}'\left(0\right)s_{x}\left(0\right)^{1/2}}
          {\left(-\widetilde{d}\left(s_{y}\left(0\right)\right)
          X_{1}''\left(s_{y}\left(0\right)\right)\right)^{1/2}},
     \end{equation}
$s_{x}(0)=x_{3}$, $s_{y}(0)=Y_{1}(x_{3})$ and $s_{y}'(0)$ is
obtained from Lemma~\ref{rc}.

The analogous result as $j\to \infty$, $i>0$ and $j/i\to \infty$
holds after the proper change of parameters.

Suppose now that $p_{11}+p_{-1-1}+p_{1-1}+p_{-11}=1$.
If $n_{0}+m_{0}$ and $i+j$ don't have the same parity
then $G_{i,j}^{n_{0},m_{0}}=0$~; if they have the same parity, the asymptotic
of $G_{i,j}^{n_{0},m_{0}}$ is given by two times the right member
of~(\ref{equation_asymptotic_Green_functions_angle_zero}).
\end{thm}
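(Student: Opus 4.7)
The strategy is to revisit the saddle-point analysis of Theorem~\ref{thm_asymptotic_Green_functions_interior} in the regime $\gamma = \arctan(j/i) \to 0^+$, then expand the resulting asymptotic. Starting from the single-integral representation (\ref{G_i_j_after_residue}), the saddle points $s_x(\tan\gamma)$ and $s_y(\tan\gamma)$ move to the limits $x_3$ and $s_y(0)=(c(x_3)/a(x_3))^{1/2}$ respectively. The crucial observation is that in the $y$-integral the limit saddle $s_y(0)$ is a regular point of $X_1$ (since $\widetilde d(s_y(0))\neq 0$) with $X_1'(s_y(0))=0$ but $X_1''(s_y(0))\neq 0$, so the saddle stays of order one; a parallel statement holds for the $x$-integral. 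Hence the steepest-descent contours $\Gamma_\gamma$, $\widetilde\Gamma_\gamma$ constructed as in Theorem~\ref{thm_asymptotic_Green_functions_interior} deform continuously as $\gamma\to 0$, and the saddle-point formula yields, uniformly for $\gamma$ in a neighborhood of $0$,
\begin{equation*}
G_{i,j}^{n_0,m_0} = \frac{C(\gamma)\,N(\gamma)}{\sqrt{i}\,s_x(\tan\gamma)^{i}\,s_y(\tan\gamma)^{j}}(1+O(1/i)),
\end{equation*}
with $N(\gamma):=s_x(\tan\gamma)^{n_0}s_y(\tan\gamma)^{m_0}-h(s_x(\tan\gamma))-\widetilde h(s_y(\tan\gamma))-h_{00}$, and $C(\gamma)\to C(0)$ finite and nonzero.

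The next step is to expand $N(\gamma)$, which vanishes at $\gamma=0$ by the functional equation. Using the identity (\ref{relation_h_h_tilde}) to rewrite $h(s_x)=s_x^{n_0}Y_0(s_x)^{m_0}-\widetilde h(Y_0(s_x))-h_{00}$ and recalling $s_y=Y_1(s_x)$, one obtains
\begin{equation*}
N(\gamma) = s_x^{n_0}\bigl[Y_1(s_x)^{m_0}-Y_0(s_x)^{m_0}\bigr] + \widetilde h(Y_0(s_x))-\widetilde h(Y_1(s_x)).
\end{equation*}
Taylor expanding around the common value at $\gamma=0$ factors out $Y_1(s_x)-Y_0(s_x)=\sqrt{d(s_x)}/a(s_x)$, giving
\begin{equation*}
N(\gamma)=\bigl[m_0 s_x^{n_0} Y_0(s_x)^{m_0-1}-\widetilde h'(Y_0(s_x))\bigr]\bigl(Y_1(s_x)-Y_0(s_x)\bigr)+O\bigl((Y_1(s_x)-Y_0(s_x))^{2}\bigr).
\end{equation*}

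Now Lemma~\ref{rc} translates the small factor into a power of $j/i$: combining $\sqrt{d(s_x)}\sim \sqrt{-d'(x_3)(x_3-s_x)}$ near $x_3$ with (\ref{eq1x}) and comparing with (\ref{eq2x}) yields $Y_1(s_x(j/i))-Y_0(s_x(j/i))=2\,s_y'(0)\,(j/i)+O((j/i)^{2})$, where $s_y'(0)=x_3 P_l(x_3)^{1/2}/(2a(x_3)^{3/2}c(x_3)^{1/2})$. Putting the pieces together gives the claimed asymptotic with prefactor $2C(0)s_y'(0)$; a direct algebraic check against the expressions for $C(\gamma)|_{\gamma=0}$ and $s_y'(0)$ identifies $2C(0)s_y'(0)$ with the constant $C_0$ of (\ref{constant_asymptotic_Green_functions_angle_zero}).

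The diagonal case $p_{11}+p_{-1-1}+p_{1-1}+p_{-11}=1$ is handled by including the contribution from the symmetric branch point $x_4=-x_3$ and applying the parity/change-of-variable identity (\ref{property_int_diagonal_walk}), exactly as in the proofs of Lemmas~\ref{lemma_residue_term} and~\ref{lemma_int_phi}; this produces the factor $2$ when parities match and vanishing otherwise. The main obstacle will be the uniformity claim of Step~2: the steepest-descent contours must be controlled as the saddle $s_x(\tan\gamma)$ migrates toward the branch point $x_3$, and the subleading $1/i^{3/2}$ terms in each of the two saddle-point expansions (which are not proportional to $j/i$) must cancel when the two integrals are summed, a cancellation whose origin lies in the functional equation (\ref{functional_equation}).
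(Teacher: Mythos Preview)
Your approach has a genuine gap at the point you yourself flag as the main obstacle. The claim that ``a parallel statement holds for the $x$-integral'' is false: as $\gamma\to 0$ the saddle $s_x(\tan\gamma)$ converges to $x_3$, which is a \emph{branch point} of $Y_1$ and a zero of $d$. The integrand $h(x)/\bigl(d(x)^{1/2}x^iY_1(x)^j\bigr)$ therefore blows up at the limiting saddle, and the standard saddle-point remainder $O(1/i)$ is not uniform in $\gamma$; the limiting steepest-descent curve $\hat x_0(t)=x_3e^{t^2}$ in fact runs straight into the cut $[x_3,x_4]$. Consequently the two separate saddle-point expansions each carry terms of order $i^{-1/2}$ and $i^{-3/2}$ that are \emph{not} $O(j/i)$, and you have no mechanism to see the required cancellation between the two integrals beyond asserting that ``the functional equation'' is responsible.

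The paper resolves this by a device you do not have: before applying the saddle-point method it (i) subtracts the constant $h(x_3)$ from the first numerator and adds it to the second, which tames the $x$-integrand near $x_3$ since $h(x)-h(x_3)\sim c\,(x_3-x)^{1/2}$; (ii) parametrizes \emph{both} steepest-descent arcs by a common variable $t$ via the automorphy relation $x_{j/i}(-t)=X_1\bigl(s_y(j/i)y_{j/i}(t)\bigr)/s_x(j/i)$; and (iii) uses the resulting Jacobian identity to \emph{merge} the two integrals into a single one $\int_{-\rho}^{\rho} f_{j/i}(t)e^{-it^2}\,\mathrm{d}t$. The merged integrand $f_{j/i}$, given explicitly in terms of $\widetilde h$ only (via (\ref{relation_h_h_tilde})), is holomorphic in a disc of radius independent of $j/i$, even at $j/i=0$: the branch-point contributions have cancelled. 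One then reads off $f_{j/i}(0)=l_1\,(j/i)(1+o(1))$ directly --- this is essentially your expansion of $N(\gamma)$ --- and the troublesome $i^{-3/2}$ cancellation becomes the clean statement $f_0''(0)=0$, proved by checking that $f_0$ is an \emph{odd} function of $t$, a symmetry visible only in the merged formulation. Your outline captures the algebraic expansion of the leading numerator correctly, but without the merging step you cannot justify the uniformity nor the vanishing of the subleading term.
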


\begin{rem}
\label{expl} Theorem
\ref{main_result_asymptotic_absorption_probabilities} and
Proposition~\ref{continuity_Martin_boundary}
 give immediately an explicit expression for
$m_{0}s_{x}(0)^{n_{0}}s_{y}(0)^{m_{0}-1}-\tilde{h}'(s_{y}(0))$
in the three cases $\omega_{2}/\omega_{3}\in 2\mathbb{N}$,
$\omega_{2}/\omega_{3}\in 2\mathbb{N}+1$
and $\omega_{2}/\omega_{3}\not\in \mathbb{N}$
that we do not list here.
\end{rem}

\begin{proof}
We detail the proof in the case $p_{11}+p_{-1-1}+p_{1-1}+p_{-11}<1$
and explain at the end what changes if
$p_{11}+p_{-1-1}+p_{1-1}+p_{-11}=1$.
 $G_{i,j}^{n_0,m_0}$ appears in (\ref{G_i_j_after_residue}) as the
sum of two integrals, one on the contour $|x|=1$, the other
on $|y|=1$. Using
Cauchy Theorem we will move these contours up to $s_x(j/i)$ and
$s_y(j/i)$ respectively  in a such way that the asymptotic of the
integrals on the new contours will be calculated by the saddle-point
method.
In order to define these new contours of integration, we need to
introduce the following -- eventually multivalued -- functions~:
     \begin{equation}\label{kappa_kappa_tilde}
          \kappa_{j/i}\left(x\right)=\ln \left(x\right)+
          \frac{j}{i}\ln \left(\frac{Y_{1}\left(s_{x}\left(j/i\right)x\right)}
          {s_{y}\left(j/i\right)}\right),\hspace{5mm}
          \widetilde{\kappa}_{j/i}\left(y\right)=
          \ln \left( \frac{X_{1}\left(s_{y}\left(j/i\right)y\right)}
          {s_{x}\left(j/i\right)}\right)+ \frac{j}{i}\ln \left(y \right).
     \end{equation}
According to Subsection~\ref{Saddle_point}, the function
$\kappa_{j/i}$ (resp.\ $\tilde{\kappa}_{j/i}$) has, for all $j/i>0$,
a critical point at $1$ where it equals $0$. Consider now the
functions $x_{j/i}(t)$ and $y_{j/i}(t)$ defined in neighborhoods
$V_{x, j/i}(0)$ and $V_{y, j/i}(0)$ of $0$ by
     \begin{equation}\label{def_steepest_parametrization}
          \kappa_{j/i}\left(x_{j/i}\left(t\right)\right)=t^{2},\hspace{10mm}
          \widetilde{\kappa}_{j/i}\left(y_{j/i}\left(t\right)\right)=t^{2},
     \end{equation}
and $\text{sign}(\text{Im}(x_{j/i}(t)))=\text{sign}(t)$,
$\text{sign}(\text{Im}(y_{j/i}(t)))=\text{sign}(t)$. These last
relationships are fixed in order to define $x_{j/i}$ and $y_{j/i}$ not
ambiguously.
By inverting the relationships~(\ref{def_steepest_parametrization}),
we obtain the explicit expression of $x_{j/i}$ and $y_{j/i}$. Here,
inverting means using the so-called Bürman-Lagrange formula, see
e.g.~\cite{Chat}, that allows to write the coefficients of the
Taylor series of a reciprocal function as integrals in terms of the
direct function.
As $j/i\to 0$, then $s_{y}(j/i)\to s_{y}(0)\in ]y_{2},y_{3}[$. We
may define
     \begin{equation*}\label{def_uniform_radius}
          \widetilde{\rho}=\inf_{j/i\in \left[0,1\right]}
          \inf\left\{y_{3}/s_{y}\left(j/i\right)-1,
          1-y_{2}/s_{y}\left(j/i\right)\right\},
     \end{equation*}
which verifies $\tilde{\rho}>0$ and $\tilde{\kappa}_{j/i}$ is
holomorphic in the disc $\mathcal{D}(1,\tilde{\rho})$ for all $j/i
\in [0,1]$. Using the Bürman-Lagrange formula we see that the radius
of $V_{y, j/i}(0)$ does not vanish as $j/i \to 0$~:  $y_{j/i}(t)$ is
in fact defined and holomorphic
in $D(0,\rho)$, $\rho$ being positive and independent of $j/i \in [0,1]$. Moreover,
the functions  $x_{j/i}(t)$ and $y_{j/i}(t)$ are joined together by~:
     \begin{equation}\label{join_steepest_x_y}
          x_{j/i}\left(-t\right)= \frac{X_{1}\left( s_{y}\left(j/i\right)
          y_{j/i}\left(t\right)\right)}{s_{x}\left(j/i\right)},\hspace{5mm}
          y_{j/i}\left(-t\right)= \frac{Y_{1}\left(s_{x}\left(j/i\right)
          x_{j/i}\left(t\right) \right)}{s_{y}\left(j/i\right)}.
     \end{equation}
This is a consequence of the automorphy relationships proved
in~\cite{FIM}~: for $x$ (resp.\ $y$) exterior to some curve (what
is the case here), $X_{1}(Y_{1}(x))=x$ (resp.\ $Y_{1}(X_{1}(y))=y$). 
For this reason and the fact that $1$ is a
critical point of order one of $\kappa_{j/i}$, we obtain that
$x_{j/i}(t)\in\{X_{1}(s_{y}(j/i) y_{j/i}(-t))/s_{x}(j/i), X_{1}(
s_{y}(j/i) y_{j/i}(t))/s_{x}(j/i)\}$. Then it suffices to calculate
the sign of the imaginary part in order to identify which of the two
possibilities  happens : we have $x_{j/i}(t)=X_{1}( s_{y}(j/i)
y_{j/i}(-t))/s_{x}(j/i)$.
The first equality of~(\ref{join_steepest_x_y}) shows that
$x_{j/i}$ is holomorphic as well in $V_{x,j/i}(0)=D(0,\rho)$ for
all $j/i\in[0,1]$.

The functions $\hat{x}_{j/i}(t)=s_{x}(j/i)x_{j/i}(t)$ and
$\hat{y}_{j/i}(t)=s_{y}(j/i)y_{j/i}(t)$ determine of course
the paths of steepest descent for
$\ln(xY_{1}(x)^{j/i})$ and $\ln(X_{1}(y)y^{j/i})$ respectively.
Note that the limiting curve $\hat{x}_{0}(t)$ runs the real line
decreasing from $\hat{x}_{0}(\rho)$ to $\hat{x}_{0}(0)=x_{3}$
and then increasing from $x_{3}$ to $\hat{x}_0(\rho)$ when
$t$ runs $[-\rho, \rho]$~; indeed $x_{0}(t)=\exp(t^{2})$.
The function $\hat{y}_{0}(t)=Y_{1}(\hat{x}_{0}(-t))$ runs
the values $Y_{1}^{-}(\hat{x}_{0}(t))=\lim_{x \uparrow \hat{x}_{0}(t)}Y_{1}(x)$
from $Y_{1}^{-}(\hat{x}_0(\rho))$ up to $Y_{1}(x_{3})$ and then the values
$Y_1^{+}(\hat{x}_{0}(t))=\lim_{x \downarrow \hat{x}_{0}(t)}Y_{1}(x)$
to $Y_1^{+}(\hat{x}_{0}(\rho))$.

For any $\rho$ small enough
we will now define two closed contours $\mathcal{C}_{\rho,j/i,x}
=x_{j/i}([-\rho, \rho])\cup \mathcal{A}_{\rho, j/i,x}$ and $\mathcal{C}_{\rho,j/i,y}
=y_{j/i}([-\rho, \rho])\cup \mathcal{A}_{\rho, j/i,y}$,
where $\mathcal{A}_{\rho, j/i, x}$, $\mathcal{A}_{\rho,j/i,y}$
verify the following three properties.

(i)   There exists a constant $c(\rho)>0$ such that
      $|\kappa_{j/i}(x)|>c(\rho)$ for any $x \in
      \mathcal{A}_{\rho, j/i,x}$ and any $j/i$ small enough, and such that
      $|\tilde{\kappa}_{j/i}(x)|>c(\rho)$ for any $y \in
      \mathcal{A}_{\rho, j/i,y}$ and any $j/i$ small enough.

(ii)  The first integrand in (\ref{G_i_j_after_residue}) does not have any
      singularities in the domain bounded by $|x|=1$ and
      the contour $s_{x}(j/i)\mathcal{C}_{\rho,j/i,x}$,
      the second integrand in (\ref{G_i_j_after_residue}) does not have any
      singularities  in the domain bounded by $|y|=1$ and
      the contour $s_{y}(j/i)\mathcal{C}_{\rho,j/i,y}$.

(iii) There exists a constant $L(\rho)$
      such that the lengths of the contours $\mathcal{C}_{\rho,j/i,x}$
      and $\mathcal{C}_{\rho,j/i,y}$ are bounded by $L(\rho)$
      for all $j/i$ small enough.

Let us construct such $\mathcal{A}_{\rho, j/i,x}$. We may take for
$\mathcal{A}_{\rho, 0,x}$ the circle of radius $x_{0}(\rho)>1$. Then
$|\kappa_{0}(x)|=|\ln(x)|= \ln(x_{0}(\rho))>(\ln(x_{0}(\rho)))/2$
for any $x\in \mathcal{A}_{\rho,0,x}$. Let us then take for
$\mathcal{A}_{\rho, j/i,x}$ the arc $\{|x_{j/i}(\rho)|\exp(\imath
\theta) : \theta\in
]\arg(x_{j/i}(\rho)),2\pi-\arg(x_{j/i}(\rho))[\}$. Since
$\hat{x}_{j/i}(\rho) \to \hat{x}_{0}(\rho)=x_{3}\exp(\rho^{2})$ as
$j/i \to 0$ and since $Y_{1}$ has no zero on $\mathbb{C}$ (it can be
easily shown that under (H4), $Y_{1}$ does not vanish on
$\mathbb{C}$), the property (i) remains valid for $\kappa_{j/i}(x)$
with $c(\rho)=(\ln(x_{0}(\rho)))/4$ for all $j/i$ small enough.
Furthermore, the singularities of the first integrand
in~(\ref{G_i_j_after_residue}) are the zeros of $d$, i.e.\ the
branch points $x_{i}$, $i\in\{1,\ldots ,4\}$. But with
Lemma~\ref{lemma_branched_points}, $x_{1}$ and $x_{2}$ are inside
the unit circle $|x|=1$~; as for $x_{3}$ and $x_{4}$, they are
outside $s_{x}(j/i)\mathcal{C}_{\rho,j/i,x}$~: $x_{3}$ is outside
this contour by construction and $x_{4}$ also lies outside
$s_{x}(j/i)\mathcal{C}_{\rho,j/i,x}$ for $\rho$ small enough,
because $|x_{4}|>x_{3}$, since we have supposed
$p_{11}+p_{-1-1}+p_{1-1}+p_{-11}<1$, see once again
Lemma~\ref{lemma_branched_points}. So (ii) is verified and (iii) is
also and obviously verified.
We can also construct $\mathcal{A}_{\rho,j/i,y}$ starting by $\mathcal{A}_{\rho,0,y}$.
Since $s_{y}(0)$ is a critical point of $X_{1}$, the level line
$\{y\in \mathbb{C}  : |X_{1}(y)|=x_{3}\}$ has a double point at
$s_{y}(0)$ and $\{y\in \mathbb{C}  : |X_{1}(y)|=x_{3}\} \setminus
\{s_{y}(0)\}$ has two connected components. Moreover, thanks to
Lemma~\ref{further_properties_X_Y}, the circle
$\mathcal{C}(0,s_{y}(0))\setminus \{s_{y}(0)\}$ lies in the domain
$\{y\in \mathbb{C}  : |X_{1}(y)|>x_{3}\}$. For this reason and since
$-s_{y}(0)\in]-y_{3},-y_{2}[\subset ]-|y_{4}|,-|y_{1}|[$, one can
clearly construct a contour $\mathcal{A}_{\rho,0,y}$ that verifies
(i), (ii) and (iii). Then, by continuity of the different quantities
w.r.t.\ $j/i$, one  can  build contours $\mathcal{A}_{\rho,j/i,y}$
verifying (i), (ii) and (iii) for all $j/i$ small enough.


Let us go back to~(\ref{G_i_j_after_residue}). Before passing from
double integrals in~(\ref{three_one}), (\ref{three_two}),
(\ref{three_three}) to the simple ones
in~(\ref{G_i_j_after_residue}), it will be convenient to subtract
the constant $h(x_{3})$ from the numerator of~(\ref{three_one}) and
add it to the numerator of~(\ref{three_two}). Next, we move the
contours $|x|=1$ and $|y|=1$ to $s_{x}(j/i)\mathcal{C}_{\rho,j/i,x}$
and $s_{y}(j/i)\mathcal{C}_{\rho,j/i,y}$~; thanks to Cauchy theorem
and since by construction the contours avoid the singularities of
the integrands, the value of the integrals is not changed. After the
change of variables $x \mapsto x s_{x}(j/i)$ and $y \mapsto y
s_{y}(j/i)$ in~(\ref{G_i_j_after_residue}), we get
$G_{i,j}^{n_{0},m_{0}}=-K_{i,j}^{n_{0},m_{0}}/(s_{x}(j/i)^{i}s_{y}(j/i)^{j})$,
where $K_{i,j}^{n_{0},m_{0}}$ is defined by~:
     \begin{eqnarray}\label{def_K}
          K_{i,j}^{n_{0},m_{0}}=\frac{s_{x}\left(j/i\right)}{2\pi \imath}
          \int_{\mathcal{C}_{\rho,j/i,x}}
          \frac{ h\left(s_{x}\left(j/i\right)x\right)-h\left(x_{3}\right)}
          {d\left(s_{x}\left(j/i\right)x\right)^{1/2}}
          e^{-i\kappa_{j/i}\left(x\right)}\text{d}x\hspace{50mm}\\
          +\frac{s_{y}\left(j/i\right)}{2\pi \imath}
          \int_{\mathcal{C}_{\rho,j/i,y}}
          \frac{\widetilde{h}\left(s_{y}\left(j/i\right)y\right)+h_{00}
          +h\left(x_{3}\right)-X_{1}\left(s_{y}\left(j/i\right)y\right)^{n_{0}}
          \left(s_{y}\left(j/i\right)y\right)^{m_{0}}}
          {\widetilde{d}\left(s_{y}\left(j/i\right)y\right)^{1/2}}
          e^{-i\widetilde{\kappa}_{j/i}\left(y\right)}\text{d}y.\nonumber
     \end{eqnarray}
We split $K_{i,j}^{n_{0},m_{0}}=K_{i,j,1}^{n_{0},m_{0}}+K_{i,j,2}^{n_{0},m_{0}}$,
where $K_{i,j,1}^{n_{0},m_{0}}$ (resp.\ $K_{i,j,2}^{n_{0},m_{0}}$)
is obtained from $K_{i,j}^{n_{0},m_{0}}$ by integrating only on the
contours $x_{j/i}([-\rho,\rho])$ and $y_{j/i}([-\rho,\rho])$ (resp.\
$\mathcal{A}_{\rho,j/i,x}$ and $\mathcal{A}_{\rho,j/i,y}$). We will
prove that the asymptotic $K_{i,j,1}^{n_{0},m_{0}}$ will lead to the
result announced in
Theorem~\ref{proposition_asymptotic_Green_functions_angle_zero} and
that $K_{i,j,2}^{n_{0},m_{0}}$ will be exponentially negligible, see~(\ref{k2}).

We start by studying $K_{i,j,2}^{n_{0},m_{0}}$. Consider the
following two quantities $S_{1,\delta}$ and $S_{2,\delta}$ and prove
that for $\delta$ sufficiently small they are finite.
     \begin{eqnarray*}
          S_{1,\delta}&=&\sup_{j/i\in\left[0,\delta\right]}
                \sup_{x\in\mathcal{C}_{\rho,j/i,x}}
                \left|\frac{ h\left(s_{x}\left(j/i\right)x\right)-h\left(x_{3}\right)}
                {d\left(s_{x}\left(j/i\right)x\right)^{1/2}}\right|,\\
          S_{2,\delta}&=&\sup_{j/i\in\left[0,\delta\right]}
                \sup_{y\in\mathcal{C}_{\rho,j/i,y}}
                \left|\frac{\widetilde{h}\left(s_{y}\left(j/i\right)y\right)+h_{00}
                +h\left(x_{3}\right)-X_{1}\left(s_{y}\left(j/i\right)y\right)^{n_{0}}
                \left(s_{y}\left(j/i\right)y\right)^{m_{0}}}
                {\widetilde{d}\left(s_{y}\left(j/i\right)y\right)^{1/2}}\right|.
     \end{eqnarray*}
The fact that for $\delta$ small enough $S_{1,\delta}$ is finite
comes from the three following properties. First, we recall from
Corollary~\ref{continuation_h_h_tilde} that $h$ is holomorphic on
$\mathbb{C}\setminus [x_{3},x_{4}]$ so that in particular $h$ is
bounded on every compact of $\mathbb{C}\setminus [x_{3},x_{4}]$.
This is why $(h(x)-h(x_{3}))/d(x)^{1/2}$ is bounded away from the
branch points $x_{i}$, $i\in\{1,\ldots,4\}$. Secondly, it can be
easily deduced from the proofs of Lemmas~\ref{lemma_residue_term}
and~\ref{lemma_int_phi} that in the neighborhood of $x_{3}$,
$h(x)=h(x_{3})+c(x_{3}-x)^{1/2}+\mathcal{O}(x_{3}-x)$, where $c$ is
some non zero constant. So the quantity $(h(x)-h(x_{3}))/d(x)^{1/2}$
is bounded in the neighborhood of $x_{3}$. Thirdly, the contours
$s_{x}(j/i)\mathcal{C}_{\rho,j/i,x}$ avoid, by construction, the
branch points $x_{1}$, $x_{2}$ and $x_{4}$. The fact that
$S_{2,\delta}$ is finite follows similarly~: first, by construction, the
contours $s_{y}(j/i)\mathcal{C}_{\rho,j/i,y}$ avoid the branch
points $y_{i}$, $i\in \{1,\ldots ,4\}$ and the function $\tilde{h}$
is bounded on every compact of $\mathbb{C}\setminus [y_{3},y_{4}]$~;
also, the poles of $X_{1}$ being isolated (they are at most two),
the contours $s_{y}(j/i)\mathcal{C}_{\rho,j/i,y}$ can be chosen
such that they remain away from these poles.
Recalling the properties (i) and (iii)  of $A_{\rho, j/i,x}$ and $A_{\rho, j/i,y}$
we deduce that for any
$i>0$  and any $j/i$ small enough,
     \begin{equation}
          \label{k2}
          \left|K_{i,j,2}^{n_{0},m_{0}}\right| \leq
          \left(L\left(\rho\right)/\left(2\pi\right)\right) 
          \left(x_{3}S_{1,\delta}+y_{3}S_{2,\delta}\right) 
          \exp\left(-ic\left(\rho\right)\right). 
     \end{equation}

Let us now turn to $K_{i,j,1}^{n_{0},m_{0}}$. Making
in~(\ref{def_K}) the change of variable $x=x_{j/i}(t)$ and
$y=y_{j/i}(t)$, we represent $K_{i,j,1}^{n_{0},m_{0}}$ as an
integral on the segment $[-\rho,\rho]$. Moreover,
using~(\ref{G_i_j_after_residue}) and the following equality -- that comes
from~(\ref{functional_equation}) and~(\ref{join_steepest_x_y}) --
     \begin{equation}\label{join_s_x_'_s_y_'}
          s_{x}\left(j/i\right)x_{j/i}'\left(-t\right)\widetilde{d}\left(s_{y}\left(j/i\right)
          y_{j/i}\left(-t\right)\right)^{1/2}=-s_{y}\left(j/i\right)y_{j/i}'\left(t\right)
          d\left(s_{x}\left(j/i\right)
          x_{j/i}\left(-t\right)\right)^{1/2},
     \end{equation}
we obtain that
$K_{i,j,1}^{n_{0},m_{0}}=\int_{-\rho}^{\rho} f_{j/i}(t)\exp(-i
t^{2})\text{d}t$, where
     \begin{equation*}
          f_{j/i}\left(t\right)=\left[
          h\left(X_{1}\left(\widehat{y}_{j/i}\left(t\right)\right)
          \right)+\widetilde{h}\left(\widehat{y}_{j/i}\left(t\right)\right)+
          h_{00}-X_{1}\left(\widehat{y}_{j/i}\left(t\right)
          \right)^{n_{0}}\widehat{y}_{j/i}\left(t\right)^{m_{0}}\right]
          {\widehat{y}_{j/i}}'\left(t\right)
          \widetilde{d}\left(\widehat{y}_{j/i}\left(t\right)\right)^{-1/2},
     \end{equation*}
and $\hat{y}_{j/i}(t)=s_{y}(j/i)y_{j/i}(t)$. In addition,
the formula~(\ref{relation_h_h_tilde}) yields that $f_{j/i}$ is equal to~:
     \begin{eqnarray}\label{def_f_j/i}
          f_{j/i}\left(t\right)&=&\left[\widetilde{h}
          \left(\widehat{y}_{j/i}\left(t\right)\right)-
          \widetilde{h}\left(\frac{c\left(X_{1}
          \left(\widehat{y}_{j/i}\left(t\right)\right)\right)}
          {a\left(X_{1}\left(\widehat{y}_{j/i}\left(t\right)\right)
          \right)\widehat{y}_{j/i}\left(t\right)}
          \right)
          \right.
          \\&-&X_{1}\left(\widehat{y}_{j/i}\left(t\right)
          \right)^{n_{0}}\left.\left(\widehat{y}_{j/i}\left(t\right)^{m_{0}}-
          \left(\frac{c\left(X_{1}\left(\widehat{y}_{j/i}\left(t\right)
          \right)\right) }{a\left(X_{1}\left(\widehat{y}_{j/i}\left(t\right)
          \right)\right)\widehat{y}_{j/i}\left(t\right)}
          \right)^{m_0}\right)\right]{\widehat{y}_{j/i}}'\left(t\right)
          \widetilde{d}\left(\widehat{y}_{j/i}\left(t\right)\right)^{-1/2}.
          \nonumber
     \end{eqnarray}
In particular, this representation~(\ref{def_f_j/i}), added to the
-- already noticed -- holomorphy of $y_{j/i}$ in
$\mathcal{D}(0,\rho)$, $\rho$ being independent of $j/i\in [0,1]$,
implies that $f_{j/i}$ is holomorphic in a disc of center $0$ and of
radius positive and independent of $j/i$ for $j/i$ small enough.
Therefore $f_{j/i}$ can be expanded in its Taylor series in the
neighborhood of $0$~: $f_{j/i}(t)=f_{j/i}(0)+t
f_{j/i}'(0)+t^{2}f_{j/i}''(0)/2+t^{3}f_{j/i}'''(0)/6+t^{4}g_{j/i}(t)$,
where $g_{j/i}$ is also holomorphic in some centered disc of radius
positive and independent of $j/i$ for $j/i$ sufficiently small.
Reducing eventually $\rho$ and $\delta$, we have
that $G=\sup_{j/i\in [0,\delta]} \sup_{t\in[-\rho,\rho]} |g_{j/i}(t)|$ is finite.
Then, applying Laplace's method, we obtain the bound
     \begin{equation}\label{def_K_1}
          \left|K_{i,j,1}^{n_{0},m_{0}}-\frac{1}{2\pi\imath}
          \left(\frac{\pi^{1/2}f_{j/i}\left(0\right)}{i^{1/2}}+
          \frac{\pi^{1/2}f_{j/i}''\left(0\right)}{4i^{3/2}}\right)\right|\leq
          \frac{3\pi^{1/2}G}{4i^{5/2}}
      + C \exp\left(-i\rho^{2}\right).
     \end{equation}
with some constant $C>0$  for any $i, j>0$, $j/i \in [0, \delta]$.

To conclude the analysis of $K_{i,j,1}^{n_{0},m_{0}}$, 
it remains to evaluate the asymptotic expansions
of $f_{j/i}(0)$ and $f''_{j/i}(0)$ as $j/i\to 0$.
Taking $t=0$ in~(\ref{def_f_j/i}) we derive
that $f_{j/i}(0)$ is equal to~:
     \begin{equation}\label{value_c_0}
          \frac{\widetilde{h}\left(s_{y}\left(j/i\right)\right)
          -\widetilde{h}\left(\frac{c\left(s_{x}\left(j/i\right)\right)}
          {a\left(s_{x}\left(j/i\right)\right)s_{y}\left(j/i\right)}\right)-
          s_{x}\left(j/i\right)^{n_{0}}\left(s_{y}\left(j/i\right)^{m_{0}}-
          \left(\frac{c\left(s_{x}\left(j/i\right)\right)}
          {a\left(s_{x}\left(j/i\right)\right)s_{y}\left(j/i\right)}\right)^{m_{0}}\right)}
          {\widetilde{d}\left(s_{y}\left(j/i\right)\right)^{1/2}}
          s_{y}\left(j/i\right)y_{j/i}'\left(0\right).
     \end{equation}
Note that since $s_{y}(0)^{2}=c(s_{x}(0))/a(s_{x}(0))$ (indeed,
$s_{y}(0)=Y_{1}(s_{x}(0))$ and $s_{x}(0)=x_{3}$), then for any suitable function $F$,
$F(s_{y}(j/i))-F(c(s_{x}(j/i))/(a(s_{x}(j/i)))s_{y}(j/i))=
2F'(s_{y}(0))s_{y}'(0)j/i+o(j/i)$, $s_{y}'(0)$ being
obtained from Lemma~\ref{rc}. We use this fact  successively with
$F(y)=\tilde{h}(y)$ and $F(y)=y^{m_{0}}$ to expand the numerator of
(\ref{value_c_0}) as $2\big(m_{0}s_{x}(0)^{n_{0}}s_{y}(0)^{m_{0}-1}-
\tilde{h}'(s_{y}(0))\big)s'_{y}(0)j/i+o(j/i)$, $j/i \to 0$.
The functions $s_x(j/i)$ and $s_y(j/i)$ being continuous
on $[0, +\infty]$,
the Taylor coefficients of $y_{j/i}(t)$ depend continuously
 on $j/i$, so that $y_{j/i}'(0) \to y'_0(0)$ as $j/i \to 0$.
   Let us compute the value of $y'_{0}(0)$. To get it, we differentiate
twice~(\ref{def_steepest_parametrization})~; this yields
     \begin{equation}\label{value_y_'_0_first}
          y_{j/i}'\left(0\right)^{2}\left.\frac{\text{d}^{2}}{\textnormal{d}y^{2}}
          \left\{ \frac{X_{1}\left(s_{y}\left(j/i\right)y\right)}{s_{x}\left(j/i\right)}
          y^{j/i}\right\} \right|_{y=1}=2.
     \end{equation}
In addition,  an explicit calculation gives~:
     \begin{equation}\label{value_y_'_0_second}
          \lim_{j/i\to 0}\left.\frac{\text{d}^{2}}{\textnormal{d}y^{2}}
          \left\{ \frac{X_{1}\left(s_{y}\left(j/i\right)y\right)}
          {s_{x}\left(j/i\right)}
          y^{j/i}\right\} \right|_{y=1}=
          \frac{s_{y}\left(0\right)^{2}}{s_{x}\left(0\right)}
          X_{1}''\left(s_{y}\left(0\right)\right).
     \end{equation}
(\ref{value_y_'_0_first}) and~(\ref{value_y_'_0_second}) imply
$y_{0}'(0)=\imath \sqrt{2}
s_{x}(0)^{1/2}(-X_{1}''(s_{y}(0)))^{-1/2}s_{y}(0)^{-1}$,
the $\imath$ coming from the fact that $X_1''(s_y(0))$ is negative.
Hence, we obtain
that $f_{j/i}(0)=l_{1} j/i(1+o(1))$, where
     \begin{equation}\label{asymptotic_c_0}
          l_{1}=-\imath
          \frac{2 \sqrt{2 } s_{y}'\left(0\right)s_{x}\left(0\right)^{1/2}
          \left(m_{0}s_{x}\left(0\right)^{n_{0}}s_{y}\left(0\right)^{m_{0}-1}-
          \widetilde{h}'\left(s_{y}\left(0\right)\right)\right)}
          {\left(-\widetilde{d}\left(s_{y}\left(0\right)\right)
          X_{1}''\left(s_{y}\left(0\right)\right)\right)^{1/2}}.
     \end{equation}
The Taylor coefficients of $y_{j/i}(t)$ depending continuously
 on $j/i$,  so do those of $f_{j/i}(t)$.
    Then $f''_{j/i}(0) \to f''_0(0)$ as $j/i \to 0$.
But $f(t)$ is an odd function on $[-\rho, \rho]$. To see this,
first, remark that~(\ref{def_Q_alternative}) yields
$c(\hat{x}_{0}(t))/(a(\hat{x}_{0}(t))Y_{1}(\hat{x}_{0}(t)))
=Y_{0}(\hat{x}_{0}(t))$. Moreover,
$Y_{1}(\hat{x}_{0}(-t))=Y_{0}(\hat{x}_{0}(t))$, so that
$\hat{x}_{0}$ being even ($\hat{x}_{0}(t)$ is equal to
$x_{3}\exp(t^{2})$), the function within the brackets
in~(\ref{def_f_j/i}) is odd. In addition,
using~(\ref{join_s_x_'_s_y_'}) we obtain that
$\hat{y}_{0}'(t)\tilde{d}(\hat{y}_{0}(t))^{-1/2}=
-\hat{x}_{0}'(-t)d(\hat{x}_{0}(-t))^{-1/2}$. Being the product of
two odd functions, $\hat{x}_{0}'d(\hat{x}_{0})^{-1/2}$ is even, so
that $\hat{y}_{0}'\tilde{d}(\hat{y}_{0})^{-1/2}$ is also even. This
implies that $f_{0}$ is odd and as an immediate consequence
$f_{0}''(0)=0$ and $f_{j/i}''(0)=o(1)$ as $j/i \to 0$.
Bringing together  (\ref{k2}), (\ref{def_K_1}), the expansions $f_{j/i}(0)=
l_{1}j/i(1+o(1))$ and $f''_{j/i}(0)=o(1)$
with $l_{1}$ defined in (\ref{asymptotic_c_0}) we obtain :
    \begin{equation}
         \label{zhj}
         G_{i,j}^{n_{0},m_{0}}=\frac{-1}{s_{x}\left(j/i\right)^{i} s_{y}\left(j/i\right)^{j}}
         \left(\frac{1}{2\pi \imath} \frac{\pi^{1/2} l_{1} j/i 
         \left(1+o\left(1\right)\right)}{i^{1/2}}
         +o\left(i^{-3/2}\right)\right) + 
         \mathcal{O}\big(\exp\left(-i c\left(\rho\right)\right)\big), 
         \ \ j/i \to 0.
     \end{equation}
This concludes  the proof of the theorem in the case
$p_{11}+p_{-1-1}+p_{1-1}+p_{-11}<1$.

We now briefly explain the notable differences
in the case $p_{11}+p_{-1-1}+p_{1-1}+p_{-11}=1$.
In this case, all the functions considered are odd or even~:
for instance $Y_{i}$ and $X_{i}$, $i=0,1$ are odd,
$d$ and $\tilde{d}$ are even (see
Lemma~\ref{further_properties_X_Y}), $h$ and $\tilde{h}$ have the parity
of $n_{0}+m_{0}$. In particular, it is immediate
from~(\ref{G_i_j_after_residue}) that if
$i+j$ and $n_{0}+m_{0}$ don't have the same parity, then
$G_{i,j}^{n_{0},m_{0}}=0$.
If they have the same parity, then
we can obtain the asymptotic of the Green functions
with essentially the same analysis as in the
case $p_{11}+p_{-1-1}+p_{1-1}+p_{-11}<1$, the only significant change being that
we have now to take under account the contribution of
two critical points~: $(s_{x}(j/i),s_{y}(j/i))$, as before, but now also
$(-s_{x}(j/i),-s_{y}(j/i))$.
In particular, the new contour of integration $\mathcal{C}_{\rho,j/i,x}$
(resp.\ $\mathcal{C}_{\rho,j/i,x}$) have
to go at once through $s_{x}(j/i)$ and through $-s_{x}(j/i)$
(resp.\ $s_{y}(j/i)$ and through $-s_{y}(j/i)$), for instance
they can be taken symmetrical w.r.t.\ the imaginary axis.
This fact underlies that the asymptotic of the
$G_{i,j}^{n_{0},m_{0}}$ is, in this case,
twice, in accordance with the conclusions
of Theorem~\ref{proposition_asymptotic_Green_functions_angle_zero}.
\end{proof}

The following result has been used in the proof of
Theorem~\ref{proposition_asymptotic_Green_functions_angle_zero}.

\begin{lem}\label{further_properties_X_Y}
     If $p_{11}+p_{-1-1}+p_{-11}+p_{1-1}<1$,
     for all $y\in\mathcal{C}(0,s_{y}(0))\setminus \{s_{y}(0)\}$,
     $|X_{1}(y)|>x_{3}$.
     If $p_{11}+p_{-1-1}+p_{-11}+p_{1-1}=1$,
     for all $y\in\mathcal{C}(0,s_{y}(0))\setminus \{\pm s_{y}(0)\}$,
     $|X_{1}(y)|>x_{3}$.
\end{lem}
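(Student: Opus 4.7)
I would exploit the convexity of $\phi(u,v)=\sum_{i,j}p_{ij}e^{iu+jv}$, which makes $\{(u,v)\in\mathbb{R}^2:\phi(u,v)\leq 1\}$ a bounded strictly convex region. From Subsection~\ref{Saddle_point}, the point $(u(0),v(0))=(\ln x_{3},\ln s_{y}(0))$ is characterized by $\mathrm{grad}(\phi)/|\mathrm{grad}(\phi)|=(1,0)$ on $\{\phi=1\}$, which identifies it as the rightmost point of this convex curve. The horizontal line $v=\ln s_{y}(0)$ therefore meets $\{\phi=1\}$ transversally at exactly two points, namely $u=\ln x_{3}$ and some $u_{1}<\ln x_{3}$, so that
\begin{equation*}
  \{u\in\mathbb{R}:\phi(u,\ln s_{y}(0))\geq 1\}=(-\infty,u_{1}]\cup[\ln x_{3},+\infty).
\end{equation*}

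For $y\in\mathcal{C}(0,s_{y}(0))$ outside the (finitely many) poles of $X_{1}$, the relation $Q(X_{1}(y),y)=0$ reads $\sum_{i,j}p_{ij}X_{1}(y)^{i}y^{j}=1$, so the triangle inequality yields $\phi(\ln|X_{1}(y)|,\ln s_{y}(0))\geq 1$. Hence $|X_{1}(y)|\in[0,e^{u_{1}}]\cup[x_{3},+\infty]$, the value $+\infty$ accommodating any pole of $X_{1}$. Moreover the circle $\mathcal{C}(0,s_{y}(0))$ is disjoint from the two cuts $[y_{1},y_{2}]\cup[y_{3},y_{4}]$, because $s_{y}(0)=Y_{1}(x_{3})>Y_{1}(1)=1>|y_{1}|,|y_{2}|$ and $s_{y}(0)<y_{3}$ by the monotonicity of $\gamma\mapsto s_{y}(\tan\gamma)$ recalled in Subsection~\ref{Saddle_point}. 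Therefore $|X_{1}|:\mathcal{C}(0,s_{y}(0))\to[0,+\infty]$ is continuous, takes the value $x_{3}$ at $y=s_{y}(0)$, and has image lying in the two-component set $[0,e^{u_{1}}]\cup[x_{3},+\infty]$. Connectedness of the circle forces this image to lie entirely in $[x_{3},+\infty]$.

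To analyze the equality case $|X_{1}(y)|=x_{3}$ with $|y|=s_{y}(0)$, observe that this forces equality in the triangle inequality, which -- since the sum equals the positive real $1$ -- requires $\arg(X_{1}(y)^{i}y^{j})\equiv 0\pmod{2\pi}$ for every $(i,j)$ with $p_{ij}>0$. Writing $X_{1}(y)=x_{3}e^{\imath\alpha}$ and $y=s_{y}(0)e^{\imath\theta}$, this becomes the system
\begin{equation*}
  i\alpha+j\theta\equiv 0\pmod{2\pi},\qquad \text{for every }(i,j)\text{ with }p_{ij}>0.
\end{equation*}
When $p_{11}+p_{-1-1}+p_{1-1}+p_{-11}<1$, at least one of the side transitions $p_{10},p_{01},p_{-10},p_{0-1}$ is non-zero, and a short case analysis using~(H4) (no three consecutive zeros in the list of boundary probabilities) produces two $\mathbb{Z}$-linearly independent constraints, forcing $(\alpha,\theta)\equiv(0,0)\pmod{2\pi}$, i.e., $y=s_{y}(0)$. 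When $p_{11}+p_{-1-1}+p_{1-1}+p_{-11}=1$ only the four diagonal transitions contribute, the constraints collapse to $\alpha\pm\theta\equiv 0\pmod{2\pi}$, whose only solutions are $(\alpha,\theta)\equiv(0,0)$ and $(\alpha,\theta)\equiv(\pi,\pi)$, giving $y\in\{\pm s_{y}(0)\}$.

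The main obstacle is the equality analysis of the last paragraph: one must verify that~(H4) genuinely prevents the support of $(p_{ij})$ from spanning only a rank-$1$ sublattice of $\mathbb{Z}^{2}$ whenever some side transition is present, which is a straightforward but slightly tedious inspection of the allowed zero patterns. Everything else reduces to the convexity of $\phi$ and a connectedness argument.
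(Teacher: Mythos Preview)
Your overall strategy is sound and cleaner than the paper's, but one intermediate claim is wrong. You assert $s_y(0)=Y_1(x_3)>Y_1(1)=1$; this already fails for $p_{10}=p_{01}=0.4$, $p_{-10}=p_{0-1}=0.1$, where $s_y(0)=\sqrt{c(x_3)/a(x_3)}=1/2$. In general $Y_1$ \emph{decreases} on $[1,x_3]$ toward the branch-point value $Y_1(x_3)=Y_0(x_3)$. The conclusion you actually need --- that the circle $|y|=s_y(0)$ avoids the cuts $[y_1,y_2]\cup[y_3,y_4]$ --- is nonetheless true, and your own convexity picture supplies it: since $(\ln x_3,\ln s_y(0))$ is the rightmost point of the strictly convex curve $\{\phi=1\}$, while its bottommost and topmost points have $v$-coordinates $\ln y_2$ and $\ln y_3$ respectively, one gets $y_2<s_y(0)<y_3$ strictly. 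Combined with $|y_1|\leq y_2<y_3\leq|y_4|$ (the $y$-analogue of Lemma~\ref{lemma_branched_points}), this yields the required disjointness. With this repair the argument goes through; the (H4) case inspection you defer at the end is indeed routine.

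For comparison, the paper handles the non-equality part quite differently: it establishes, by squaring and expanding, a rather heavy explicit polynomial identity in $y$ showing $|X_1(-y)|>|X_1(y)|$ for $y\in[y_2,y_3]$, and only afterwards invokes the normalized-probability trick $\hat p_{ij}=p_{ij}x_3^{\,i} s_y(0)^{\,j}$ --- which is exactly your triangle-inequality equality analysis --- to prove $\{|X_1|=x_3\}\cap\mathcal C(0,s_y(0))=\{s_y(0)\}$; the polynomial inequality is then used merely to select the correct side of $x_3$. Your approach replaces that computation by the single observation $\phi(\ln|X_1(y)|,\ln s_y(0))\geq 1$, which together with convexity confines $|X_1|$ to the disconnected set $[0,e^{u_1}]\cup[x_3,+\infty]$ and lets connectedness of the circle do the rest. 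The paper's route buys the extra inequality $|X_1(-y)|>|X_1(y)|$ on the positive axis, which is not used elsewhere; yours is shorter and more conceptual.
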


\begin{proof}
As a direct consequence of Lemma~\ref{lemma_branched_points},
we obtain that $X_{1}$ is meromorphic in the 
neighborhood of every point of $]-y_{3},-y_{2}[$,
since $]-y_{3},-y_{2}[\subset ]-|y_{4}|,-|y_{1}|[$.
Let us now show that 

(i)        if $p_{11}+p_{-1-1}+p_{-11}+p_{1-1}<1$, then
           for all $y\in [y_{2},y_{3}]$,
           $|X_{1}(-y)|>|X_{1}(y)|$,

(ii)       if $p_{11}+p_{-1-1}+p_{-11}+p_{1-1}=1$, then
           for all $y\in [y_{2},y_{3}]=[-y_{1},-y_{4}]$,
           $X_{1}(-y)=-X_{1}(y)$.

Start by remarking that
$X_{1}(y)\leq 0$ (resp.\ $X_{1}(y)\geq 0$) on
$[y_{4},y_{1}]$ (resp.\ on $[y_{2},y_{3}]$), this is proved in~\cite{FIM}.
Thus, on $[y_{2},y_{3}]$, the unique possibility to have
$|X_{1}(-y)|=|X_{1}(y)|$ is that $X_{1}(y)=-X_{1}(-y)$.
After calculation, we obtain that this is equivalent to
$(p_{01}y^{2}+p_{0-1})^{2}( p_{11}y^{2}+p_{1-1} ) (p_{-11}y^{2}+p_{-1-1}) +
\{(p_{-10}p_{1-1}-p_{10}p_{-1-1})^{2}+p_{0-1} (p_{-10}p_{1-1}+p_{10}p_{-1-1}) \} y^{2}+
\{p_{-10}p_{10}(1-2(p_{1-1}p_{-11}+p_{-1-1}p_{11} ))+p_{-10}p_{1-1}
(p_{01}+p_{-10}p_{11})+p_{-10}p_{11}(p_{0-1}+p_{-10}p_{1-1}) +
p_{10}p_{-1-1}(p_{01}+p_{10}p_{-11})+p_{10}p_{-11} (p_{0-1}+p_{10}p_{-1-1}) \}y^{4}+
\{(p_{10}p_{-11}-p_{-10}p_{11})^{2}+p_{01}(p_{10}p_{-11}+p_{-10}p_{11})\}y^{6}=0$.
If $p_{11}+p_{-1-1}+p_{1-1}+p_{-11}<1$ 
(resp.\ $p_{11}+p_{-1-1}+p_{1-1}+p_{-11}=1$), then the previous equality
holds for none (resp.\ any) $y\in [y_{2},y_{3}]$. Therefore (ii) is proved.
To prove (i), we remark that an explicit calculation leads to
$|X_{1}(-1)|>1=X_{1}(1)$ so that by continuity, for all $y \in [y_{2},y_{3}]$,
$|X_{1}(-y)|>|X_{1}(y)|$.

We prove now Lemma~\ref{further_properties_X_Y} in the case
$p_{11}+p_{-1-1}+p_{1-1}+p_{-11}<1$. We will show that
$\mathcal{C}(0,s_{y}(0))\cap \{y\in \mathbb{C} :
|X_{1}(y)|=x_{3}\}=\{s_{y}(0)\}$. This suffices since one hand, this
implies that either for all $y\in \mathcal{C}(0,s_{y}(0))\setminus
\{s_{y}(0)\}$, $|X_{1}(y)|>x_{3}$ or for all $y\in
\mathcal{C}(0,s_{y}(0))\setminus \{s_{y}(0)\}$, $|X_{1}(y)|<x_{3}$~;
but on the other hand, thanks to (i),
$|X_{1}(-s_{y}(0))|>X_{1}(s_{y}(0))=x_{3}$, so that by continuity we
will conclude. Let $y^{*}\in \mathcal{C}(0,s_{y}(0))$ be such that
$|X_{1}(y^{*})|=x_{3}$. Setting
$\hat{x}=X_{1}(y^{*})/x_{3}$, $\hat{y}=y^{*}/s_{y}(0)$ and using
$Q(x_{3},s_{y}(0))=0$, $Q(X_{1}(y^{*}),y^{*})=0$, we obtain
$\hat{Q}(\hat{x},\hat{y})=0$, where
$\hat{Q}(\hat{x},\hat{y})=(\sum_{i,j}\hat{p}_{i j}
\hat{x}^{i}\hat{y}^{j}-1)\hat{x}\hat{y}$
and $\hat{p}_{i j}=p_{i j}x_{3}^{i}s_{y}(0)^{j}$. In particular,
for all $i$ and $j$, $\hat{p}_{i j}>0$ and $\sum_{i,j}\hat{p}_{i
j}=1$, since $Q(x_{3},s_{y}(0))=0$. But from elementary
considerations about sums of complex numbers, having simultaneously
$\sum_{i,j}\hat{p}_{i j}=1$, $\sum_{i,j}\hat{p}_{i
j}\hat{x}^{i}\hat{y}^{j}=1$ and $|\hat{x}|=|\hat{y}|=1$
leads necessarily to $\hat{x}=\hat{y}=1$, so that
$y^{*}=s_{y}(0)$. 

We conclude the proof in the case
$p_{11}+p_{-1-1}+p_{1-1}+p_{-11}=1$ by using similar arguments and
the fact that $x_{4}=-x_{3}$.
\end{proof}

\begin{rem}
Thanks to Proposition~\ref{continuity_Martin_boundary}, it is
immediate now that for any $n_{0},m_{0},n_{1},m_{1}>0$ ~:
     \begin{equation}\label{first_equality_continuity_Martin}
        \lim_{i,j>0, j/i \to 0}
        \frac{G_{i,j}^{n_{0},m_{0}}}{G_{i,j}^{n_1,m_1}}
          =  \lim_{i\to \infty} \frac{h_{i}^{n_{0},m_{0}}}{h_{i}^{n_{1},m_{1}}}=
          \frac{m_{0}s_{x}\left(0\right)^{n_{0}}s_{y}\left(0\right)^{m_{0}-1}-
          {\widetilde{h}^{n_{0},m_{0}\hspace{0.2mm}'}}\left(s_{y}\left(0\right)\right)}
          {m_{1}s_{x}\left(0\right)^{n_{1}}s_{y}\left(0\right)^{m_{1}-1}-
          {\widetilde{h}^{n_{1},m_{1}\hspace{0.2mm}'}}\left(s_{y}\left(0\right)\right)},
     \end{equation}
the probabilities of absorption $h_{i}^{n_{0},m_{0}}$ being
defined in~(\ref{absorption_probabilities}). In addition,
using~(\ref{relation_h_h_tilde}) and Lemma~\ref{rc}, we obtain that
as $\gamma$ goes to zero, the limit  $\lim_{i,j>0, j/i \to \tan( \gamma)}
G_{i,j}^{n_{0},m_{0}}/G_{i,j}^{n_{1},m_{1}}$
converges also to the right member
of~(\ref{first_equality_continuity_Martin}). In other words, the Martin kernel
is continuous at $0$ and likewise, we verify that it is also
continuous at $\pi/2$.
\end{rem}

\newpage
\footnotesize

\bibliographystyle{alpha}
\bibliography{RW_IK_KR}

\end{document}